%%%%  2021.5.9. 

\documentclass[11pt,article]{amsart}
\usepackage{amsmath,verbatim,tikz,amscd,amssymb}
\usetikzlibrary{arrows,patterns}
\allowdisplaybreaks

\setlength{\oddsidemargin}{.7cm}
\setlength{\evensidemargin}{.7cm}
\setlength{\topmargin}{-1cm}
\setlength{\textheight}{23cm}
\setlength{\textwidth}{440pt}

\usepackage{xcolor}
\newcommand{\red}{\color{red}}

\newtheorem{theorem}{Theorem}[section]
\newtheorem{lemma}[theorem]{Lemma}
\newtheorem{proposition}[theorem]{Proposition}
\newtheorem{corollary}[theorem]{Corollary}
\newtheorem{question}[theorem]{Question}

\theoremstyle{definition}
\newtheorem{definition}[theorem]{Definition}
\newtheorem{example}[theorem]{Example}

\newtheorem{remark}[theorem]{Remark}

\newenvironment{proofof}[1]{\noindent{\it Proof of
#1.}}{\hfill$\square$\\\mbox{}}

\def\cocoa{{\hbox{\rm C\kern-.13em o\kern-.07em C\kern-.13em o\kern-.15em A}}}

% TiKZ style file generated by TikZiT. You may edit this file manually,
% but some things (e.g. comments) may be overwritten. To be readable in
% TikZiT, the only non-comment lines must be of the form:
% \tikzstyle{NAME}=[PROPERTY LIST]

% Node styles
\tikzstyle{new style 0}=[fill=white, draw=black, shape=circle, scale=0.5]
\tikzstyle{new style 1}=[fill=white, draw={rgb,255: red,191; green,0; blue,64}, shape=circle, scale=0.5]
\tikzstyle{new style 2}=[fill=white, draw=black, shape=rectangle]
\tikzstyle{small_fill}=[fill=black, draw=black, shape=circle, scale=0.2]
\tikzstyle{none}=[fill=none, draw=none, shape=circle, scale=0.5]
% Edge styles
\tikzstyle{new edge style 0}=[->]
\tikzstyle{new edge style 1}=[-, draw={rgb,255: red,191; green,0; blue,64}]
\tikzstyle{up_curve}=[-, out=15, in=165]
\tikzstyle{new edge style 2}=[->, draw={rgb,255: red,0; green,0; blue,255}, thick, >=stealth]

%=====================================================

\begin{document} 

\title{Low dimensional flow polytopes and their toric ideals}
\author{M. Domokos 
\and D. Jo\'o} 
\thanks{Partially supported by  National Research, Development and Innovation Office,  NKFIH K 119934, K 132002 and PD 121410.  }
%The paper is partially based on research reported in \cite{joo_phd}.}

\subjclass[2010]{Primary: 13P10;   Secondary: 05E40, 14M25, 15A39, 16G20, 52B20.}

\keywords{binomial ideal, flow polytope, triangulation, Gr\"obner basis, moduli space of quiver representations, toric varieties}

\date{}
%{\small \begin{center} 
%\institute
\address{Alfr\'ed R\'enyi Institute of Mathematics\\
Re\'altanoda u. 13-15, 1053 Budapest, Hungary} 
\email{domokos.matyas@renyi.hu} \quad \email{joo.daniel@renyi.hu }
%\end{center}
\maketitle

%\date{}                                           % Activate to display a given date or no date
\begin{abstract} The toric ideal of a $d$-dimensional flow polytope has an initial ideal 
generated by square-free monomials of degree at most $d$. 
The toric ideal of a flow polytope of dimension at most four has 
an initial ideal generated by square-free monomials of degree at most two,  
with the only exception of the four-dimensional Birkhoff polytope, whose toric ideal has an initial ideal generated by a
square-free cubic monomial. 
The proof is based on a method to classify certain compressed flow polytopes, and 
a construction of a 
quadratic pulling  triangulation of them.   
Along the way compressed flow polytopes are classified up to dimension four, 
and their Ehrhart polynomials are computed. 
\end{abstract}

\section{Introduction} 

In this paper we study flow polytopes and their toric ideals. 
Given a lattice polytope $\nabla$, one associates with it an ideal $\mathcal{I}_{\nabla}$ generated by 
binomials in a polynomial ring whose variables are labeled by the lattice points in $\nabla$. 
It was conjectured in \cite{diaconis-eriksson} that the ideals of the Birkhoff polytopes are generated in degree $3$. 
The conjecture was proved in \cite{yamaguchi-ogawa-takemura} (dealing in fact with a class of transportation polytopes, that includes the Birkhoff polytopes). Using this result, the following was proved   
in \cite{domokos-joo} (it is stated there for $K=\mathbb{C}$, but the proof obviously holds for any base field $K$): 

\begin{theorem}\label{thm:domokos-joo}  (\cite[Theorem 9.3]{domokos-joo}) 
For any flow polytope $\nabla$ the ideal $\mathcal{I}_{\nabla}$ is generated by binomials of degree at most $3$. 
\end{theorem}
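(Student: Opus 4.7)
The plan is to reduce the claim to the transportation-polytope case already handled in \cite{yamaguchi-ogawa-takemura}. The key idea is to realize every flow polytope $\nabla$ as a face of a suitable transportation polytope $T$ in a lattice-preserving manner, and then to transfer the degree bound on generators of $\mathcal{I}_T$ down to $\mathcal{I}_\nabla$ via elimination of the vanishing coordinates.

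I would proceed as follows. First, reduce to the case in which the underlying digraph $Q$ of $\nabla$ is acyclic, treating circulations on directed cycles separately (they contribute only mild generators to the toric ideal and can be controlled independently). Next, I would apply a vertex-splitting construction, replacing each vertex $v$ of $Q$ by an out-copy $v^+$ and an in-copy $v^-$, so that the resulting graph is bipartite and its edges can be identified with a subset of the edges of a complete bipartite graph $K_{m,n}$. Choosing the supply and demand vectors of $K_{m,n}$ carefully (large enough to accommodate every flow on $Q$, but structured enough that no extraneous lattice points appear), I would identify $\nabla$ with the face of the transportation polytope $T$ on $K_{m,n}$ cut out by setting all ``non-$Q$'' coordinates to zero. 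The critical verification is that the lattice points of $\nabla$ match exactly, both in bijection and in their affine-lattice span, those of this face of $T$.

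With the realization in hand, the theorem of \cite{yamaguchi-ogawa-takemura} gives that $\mathcal{I}_T$ is generated by binomials of degree at most $3$. A standard principle on toric ideals of faces, namely that setting certain variables to zero in a binomial generating set yields a binomial generating set of the face's toric ideal of the same or lower degree, then delivers the desired bound for $\mathcal{I}_\nabla$. I anticipate the main obstacle to be the lattice-preservation in the embedding step: it is easy to exhibit a transportation polytope containing $\nabla$ as a face in the geometric sense, but more delicate to ensure both that no spurious integer points appear on the face and that the affine lattices agree. Calibrating the supplies and demands of the ambient $K_{m,n}$ so that the toric ring of $\nabla$ is genuinely the coordinate-ring quotient of the toric ring of $T$ is the technical heart of the argument.
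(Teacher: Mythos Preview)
This theorem is not proved in the present paper; it is quoted from the authors' earlier work \cite[Theorem~9.3]{domokos-joo}, and the introduction only records that the proof there ``uses'' the Yamaguchi--Ogawa--Takemura result. So there is no proof here to compare against line by line, but your overall strategy---realize an arbitrary flow polytope as a face of a polytope in the Yamaguchi--Ogawa--Takemura class and then push the degree bound down along the face---is exactly the mechanism the introduction points to.

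Two comments on the execution. First, your worry about lattice preservation is less serious than you suggest. Once $\nabla$ is genuinely a face of $T$ (supported by a linear functional $\ell$ vanishing on $\nabla$ and strictly positive on the other lattice points of $T$), the image of $\mathcal{I}_T$ under setting the non-face variables to zero equals $\mathcal{I}_\nabla$, and no generator can specialize to a nonzero monomial: if $t^\alpha - t^\beta \in \mathcal{I}_T$ with $\alpha$ supported on the face, then $\ell(\sum\alpha_i z_i)=0$, forcing $\ell(\sum\beta_i z_i)=0$ and hence $\beta$ is also supported on the face. So the surviving images are binomials of the same degree, and your ``standard principle'' is fully justified. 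For flow polytopes embedded in a larger quiver by adding arrows (set to zero), the affine lattices automatically match because both are cut from the ambient $\mathbb{Z}^{Q_1'}$ by coordinate hyperplanes.

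Second, and more substantively: the result in \cite{yamaguchi-ogawa-takemura} is stated for ``a class of transportation polytopes that includes the Birkhoff polytopes'', not for arbitrary transportation polytopes. Your reduction, as written, produces a transportation polytope on $K_{m,n}$ with unspecified supply and demand vectors, and you have not argued that the output lands in that class. This is the real gap. One way to close it, in the spirit of the constructions in this paper, is to make the bipartite step more structured: rather than a generic vertex split, use a $\Gamma^*$-type construction (put a sink on every arrow) to obtain a bipartite quiver with all sinks of valency~$2$, and then embed into a complete bipartite quiver in a way that controls the weight so that the ambient polytope is (a dilate of) a Birkhoff polytope, which is squarely inside the Yamaguchi--Ogawa--Takemura range. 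Your sentence ``choosing the supply and demand vectors carefully'' is precisely where this calibration has to happen, and it cannot be left implicit.
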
  

We note also that it follows from known general arguments that the toric ideal of any  $d$-dimensional flow polytope has a Gr\"obner basis generated by elements of degree at most $d$ (see 
Proposition~\ref{prop:general} 
for  explanation).  In view of this and Theorem~\ref{thm:domokos-joo},  
the following questions naturally arise: 

\begin{question}\label{question:cubic Groebner} 
\begin{itemize}  
\item[(i)] Does $\mathcal{I}_{\nabla}$ have a cubic Gr\"obner basis for each flow polytope $\nabla$? 
If not, give a good degree bound for the generators of an appropriate initial ideal of  $\mathcal{I}_{\nabla}$. 
\item[(ii)] What are the flow polytopes $\nabla$ for which $\mathcal{I}_{\nabla}$ is quadratically generated? 
\item[(iii)]  What are the flow polytopes $\nabla$ for which $\mathcal{I}_{\nabla}$ 
has a quadratic Gr\"obner basis? 
\end{itemize}
\end{question} 
Question~\ref{question:cubic Groebner} 
was answered for $3\times 3$ transportation polytopes (a special class of flow polytopes of dimension at most $4$) 
in \cite[Theorem 1]{haase-paffenholz}. Generalizing this result, in this paper we consider \emph{all} flow polytopes of dimension at most $4$, 
and in Corollary~\ref{cor:cubic groebner} we give an answer for Question~\ref{question:cubic Groebner}, showing that up to dimension $4$ a flow polytope always has a quadratic Gr\"obner basis unless it is equivalent to the Birkhoff polytope, which has a cubic Gr\"obner basis. We note that the more general context of flow polytopes (as opposed to transportation polytopes considered in \cite{haase-paffenholz}) allowed us to sharpen \cite[Theorem 1]{haase-paffenholz}, as the methods of the present paper settle   the proper multiples of the 4-dimensional Birkhoff polytope $B_3$ as well.  
The fact that the toric ideal of $nB_3$ 
has a quadratic Gr\"obner basis for $n>1$ and has a square-free quadratic initial ideal for $n$ divisible by $2$ or $3$ is known, see 
\cite{ohsugi-hibi_2010}. 

We use a combination of the method of  Haase and Paffenholz \cite{haase-paffenholz} and results from our paper \cite{domokos-joo}. 
A flow polytope has a natural hyperplane subdivision, whose cells are compressed flow polytopes. 
The results of \cite{domokos-joo} offer a transparent way to classify in low dimension the relevant  compressed flow polytopes (see 
Corollary~\ref{cor:3-regular}, Proposition~\ref{prop:P(Gamma)}, Proposition~\ref{prop:all compressed}), and to construct appropriate triangulations for them. The existence of these triangulations  
yields the desired statements on Gr\"obner bases by well-known results from \cite{sturmfels}. 
Along the way we classify all compressed flow polytopes of dimension at most $4$ 
(see Proposition~\ref{prop:all 3-dim compressed} and Proposition~\ref{prop:all 4-dim compressed}), and find a pulling triangulation for each prime compressed flow polytope of dimension $3$ or $4$ 
(see Proposition~\ref{prop:triangulation of 4-cells} and Proposition~\ref{prop:triangulation of new 4-cells}).  Finally we use this information to compute the Ehrhart polynomials of the prime compressed $3$ and $4$-dimensional flow polytopes (see Proposition~\ref{prop:ehrhart}).  

We close this introduction by mentioning sources of interest in flow polytopes. The notion arises naturally in combinatorial optimization, in the study of integer network flows (see \cite{schrijver}). A notable subclass is the class of transportation polytopes, including the Birkhoff polytopes. Toric ideals of Birkhoff  polytopes were applied in algebraic statistics in 
\cite{diaconis-sturmfels} and \cite{diaconis-eriksson}. 
Ehrhart polynomials of certain flow polytopes 
were intensively studied in recent years 
(see for example \cite{beck-pixton},  \cite{baldonisilva-deloera-vergne}, \cite{baldoni-vergne}, \cite{meszaros-morales}), motivated partly by the observation that in certain cases they count 
the dimension of weight spaces of representations of compact Lie groups. The toric varieties associated to flow polytopes are special cases of moduli spaces of quiver representations 
(see \cite{hille}, \cite{hille:canada}, \cite{altmann-straten}, 
\cite{domokos-joo}). 
The investigation of the toric indeals of 
$3\times 3$ transportation polytopes in \cite{haase-paffenholz} was inspired by the so-called B\"ogvad conjecture.

\section{Preliminaries on graphs and  flow polytopes}\label{sec:prel-flow-polytopes} 

\subsection{Graphs.} \label{sec:graphs} 
In this paper a \emph{graph} $\Gamma$ means an undirected graph, having possibly multiple edges, but no loops. Denote by $\Gamma_0$ the set of vertices and $\Gamma_1$ the set of edges. 
Write $\chi_0(\Gamma)$ for the number of connected components of $\Gamma$, and set 
\[\chi(\Gamma)=|\Gamma_1|-|\Gamma_0|+\chi_0(\Gamma).\] 
For a vertex $v\in \Gamma_0$ we call the \emph{valency} of $v$ (denoted by 
$\mathrm{valency}_{\Gamma}(v)$) the number of edges in $\Gamma_1$ adjacent to $v$. 

Let us introduce the following ad hoc terminology: we call a graph $\Gamma$ \emph{prime} if it is connected, 
%$\chi(\Gamma)>0$,
has at least one edge, 
and $\Gamma$ remains connected on removing any of its vertices and the adjacent edges. 
For a positive integer $d\ge 2$ denote by $\mathcal{L}_d$ the set of prime 
graphs $\Gamma$ such that $\chi(\Gamma)=d$ and each vertex of $\Gamma$ has valency at least $3$. 
Write $\mathcal{L}_1$ for the one-element set consisting of the graph with two vertices, connected by two edges. 

Define the \emph{chassis} $\mathcal{C}(\Gamma)$ of a  prime graph  $\Gamma$ with $\chi(\Gamma)>0$ 
as follows. First take the graph $\Gamma'$ obtained by contracting all the edges in $\Gamma$ that are not contained in a 
cycle of $\Gamma$ (contracting an edge means that we remove it and identify its end vertices). 
Note that all vertices of $\Gamma'$ have valency at least $2$. 
Set $V=\{v\in \Gamma'_0\mid \mathrm{valency}_{\Gamma'}(v)\ge 3\}$. Note that $|V|\ge 2$ or $V=\emptyset$.  If $|V|\ge 2$, the chassis  $\mathcal{C}(\Gamma)$ is obtained from $\Gamma'$ by 
replacing each path in $\Gamma'$ that connects distinct vertices $v_1,v_2\in V$   and does not meet any 
other vertex in $V$ by an edge in $\mathcal{C}(\Gamma)$ connecting $v_1$ and $v_2$. 
If $V=\emptyset$, then $\Gamma'$ is a cycle of length at least $2$; in this case $\mathcal{C}(\Gamma)$ is defined to be the cycle of length $2$. 
Note that the chassis of a prime graph $\Gamma$ with $\chi(\Gamma)>0$ belongs to $\mathcal{L}_{\chi(\Gamma)}$, 
and if $\Gamma$ belongs to $\bigcup_{d=1}^{\infty}\mathcal{L}_d$, then $\mathcal{C}(\Gamma)=\Gamma$. 

If $e$ is the only edge in a graph $\Gamma$ that connects the vertices $v$ and $v'$, then the \emph{contraction of 
 $\Gamma$ at $e$} is the graph $\Gamma'$ which is obtained from $\Gamma$ by removing $e$ and identifying $v$ and $v'$. 
We say that the graph $\Gamma'$ is a \emph{contracted descendant} of $\Gamma$, if $\Gamma'$ is obtained from $\Gamma$ by successively contracting edges. 
This gives a partial ordering on the set of isomorphism classes of graphs. 

A graph is \emph{$3$-regular} if all its vertices have valency $3$. For $d\ge 2$ denote by 
$\mathcal{L}_d^{3-\mathrm{reg}}$ the subset of $3$-regular graphs in $\mathcal{L}_d$. 
Note that for $\Gamma\in  \mathcal{L}_d^{3-\mathrm{reg}}$ we have 
$|\Gamma_0|=2(d-1)$ and $|\Gamma_1|=3(d-1)$. 

It is easy to see that for $d\ge 2$, for any $\Gamma\in \mathcal{L}_d$ there exists a 
$\Gamma'\in \mathcal{L}_d^{3-\mathrm{reg}}$ such that $\Gamma$ is a contracted descendant of $\Gamma'$. 
Indeed, if for some $v\in \Gamma_0$ we have $\mathrm{valency}_{\Gamma}(v)\ge 4$, then the set of edges adjacent to 
$v$ can be partitioned as the disjoint union $E_1\bigsqcup E_2$ with $|E_1|\ge 2$ and $|E_2|\ge 2$. 
Now replace the vertex $v$ by a new edge $e$ with end vertices $v_1$ and $v_2$, such that 
for an edge in $E_i$ the end vertex $v$ is replaced by $v_i$ ($i=1,2$). 
Clearly $\Gamma$ is recovered from this new graph by contracting the edge $e$. 
Iterating this step we arrive at the desired $3$-regular graph.

\subsection{Lattice polytopes.} 
A \emph{polytope} is the convex hull of a finite set of points in $\mathbb{R}^d$. It is a \emph{lattice polytope} if its vertices belong to $\mathbb{Z}^d$. 
Denote by $\mathrm{AffSpan}(\nabla)$ the affine subspace of $\mathbb{R}^d$ generated by 
$\nabla$. 
The lattice polytopes $\nabla\subset \mathbb{R}^d$ and $\nabla'\subset \mathbb{R}^{d'}$ are \emph{equivalent} 
(notation: $\nabla\cong\nabla'$) if there exists an affine linear isomorphism  $\varphi:\mathrm{AffSpan}(\nabla)\to\mathrm{AffSpan}(\nabla')$ of affine subspaces such that 
\begin{itemize}
\item[(i)] $\varphi$ maps $\mathrm{AffSpan}(\nabla)\cap \mathbb{Z}^d$ onto $\mathrm{AffSpan}(\nabla')\cap \mathbb{Z}^{d'}$; 
\item[(ii)] $\varphi$ maps  $\nabla$ \ onto $\nabla'$. 
\end{itemize} 

\subsection{Quivers and flow polytopes.} 
Following the literature on representation theory of finite dimensional algebras, by a \emph{quiver} $Q$ we mean a finite directed graph with vertex set $Q_0$, arrow set $Q_1$; 
for an arrow $a\in Q_1$ we denote by $a^-\in Q_0$ its tail and by $a^+\in Q_0$ its head. 
Multiple arrows are allowed, 
but we shall assume that there are no loops in $Q_1$, i.e. $a^-\neq a^+$ for all $a\in Q_1$. 
We shall denote by $\Gamma(Q)$ the underlying graph of $Q$, i.e. the graph obtained by forgetting the orientation of the arrows in $Q$, set 
$\chi(Q):=\chi(\Gamma(Q))$, and for $v\in Q_0$ we write 
$\mathrm{valency}_Q(v)=\mathrm{valency}_{\Gamma(Q)}(v)$.   
If $a\in Q_1$ is the only arrow from $a^-$ to $a^+$, the \emph{contraction of $Q$ at $a$} is the quiver $Q'$ 
obtained from $Q$ by removing the arrow $a$ and identifying its end vertices $a^-$ and $a^+$. 
Clearly, in this case $\Gamma(Q')$ is the contraction of $\Gamma(Q)$ at the edge corresponding to $a$. 

We shall restrict our attention to \emph{flow polytopes} $\nabla(Q,\theta,\underline{\ell},\underline{u})$ associated to a quiver $Q$, a \emph{weight} $\theta\in \mathbb{Z}^{Q_0}$,  and non-negative integer vectors $\underline{\ell},\underline{u}\in \mathbb{N}_0^{Q_1}$ as follows. First consider the affine subspace 
\[\mathcal{A}(Q,\theta)=\{x\in \mathbb{R}^{Q_1}\mid \forall v\in Q_0: 
\sum_{a^+=v}x(a)-\sum_{a^-=v}x(a)=\theta(v)\}\] 
in $\mathbb{R}^{Q_1}$. The polytope 
\[\nabla(Q,\theta,\underline{\ell},\underline{u})=\{x\in \mathcal{A}(Q,\theta)\mid \forall a\in Q_1: \underline{\ell}(a)\le 
x(a)\le \underline{u}(a)\}\] 
is a lattice polytope in $\mathbb{R}^{Q_1}$ by the generalized 
Birkhoff-von Neumann Theorem (cf. \cite[Theorem 13.11]{schrijver}). 
We shall also use the notation 
\[\nabla(Q,\theta)=\{x\in \mathcal{A}(Q,\theta)\mid \forall a\in Q_1: x(a)\ge 0\}.\] 
This is a polyhedron (the intersection of finitely many closed half-spaces in $\mathbb{R}^{Q_1}$). 
When the quiver $Q$ is acyclic, $\nabla(Q,\theta)=\nabla(Q,\theta,\underline{0},\underline{u})$ for $\underline{u}$ large enough, so it is a lattice polytope.  In fact it is well known that for 
any flow polytope $\nabla(Q,\theta,\underline{\ell},\underline{u})$ there exists an acyclic quiver $Q'$ and a weight $\theta'$ such that the lattice polytope $\nabla(Q,\theta,\underline{\ell},\underline{u})$ is equivalent to $\nabla(Q',\theta')$ 
(see for example \cite[Proposition 2.2]{domokos-joo} for a proof).  Note that 
$\dim(\nabla(Q,\theta,\underline{\ell},\underline{u}))\le \chi(Q)=\chi(Q')$. 

\noindent{\bf Convention.} Later in the text when we say that \emph{$\nabla$ is a flow polytope}, we shall mean that we specified $\nabla$ up to equivalence, but did not necessarily fix a concrete pair $(Q,\theta)$ such that 
$\nabla(Q,\theta)$ belongs to the equivalence class of $\nabla$. However, sometimes we shall use the notation 
$\nabla(Q,\theta)$ to denote a flow polytope defined up to equivalence only 
(see for example the statements of Proposition~\ref{prop:3-cells} and Proposition~\ref{prop:4-cells}). 

\subsection{Tightness.} \label{sec:tightness}
This notion was first introduced in \cite[Definition 12]{altmann-straten}.  
The definition given below is equivalent to 
\cite[Definition 4.1]{domokos-joo} by 
\cite[Proposition 4.2]{domokos-joo}. 
\begin{definition} 
The pair $(Q,\theta)$ of an acyclic quiver $Q$ and a weight $\theta\in \mathbb{Z}^{Q_0}$ is 
\emph{tight} if there is no arrow $a\in Q_1$ satisfying condition (R) or (C) below: 
\begin{itemize}
\item[({\bf{R)}}] $\nabla(Q,\theta)\subseteq \{x\in \mathbb{R}^{Q_1}\mid x(a)=0\}$; 
\item[({\bf{C}})] $\{x\in \mathcal{A}(Q,\theta)\mid x(a)\ge 0\}\supseteq 
\{x\in \mathcal{A}(Q,\theta)\mid \forall b\in Q_1\setminus \{a\}: x(b)\ge 0\}$. 
\end{itemize} 
When (R) holds for $a\in Q_1$, we say that \emph{$a$ is removable for $\theta$}. 
When (C) holds for $a\in Q_1$, we say that \emph{$a$ is contractible for $\theta$}. 
\end{definition}
This terminology and the role of tightness are explained by the following statement, 
summarizing \cite[Corollary 4.5]{domokos-joo},  
\cite[Proposition 4.2]{domokos-joo}, \cite[Lemma 4.4]{domokos-joo},  
 and \cite[Proposition 4.21]{domokos-joo}: 

\begin{proposition}\label{prop:tightness} 
\begin{itemize} 
\item[(i)] If $(Q,\theta)$ is tight, the assignment $a\mapsto \{x\in \nabla(Q,\theta)\mid x(a)=0\}$ 
gives a bijection between $Q_1$ and the set of facets (codinension $1$ faces) of the polytope $\nabla(Q,\theta)$.  . \item[(ii)] If $(Q,\theta)$ is tight, then $\dim(\nabla(Q,\theta))=\chi(Q)$. 
\item[(iii)] If $a\in Q_1$ is removable for $\theta$, then $\nabla(Q,\theta)$ is equivalent to $\nabla(Q',\theta)$, where $Q'$ is obtained by removing $a$ from $Q_1$. 
\item[(iv)] If $a\in Q_1$ is contractible for $\theta$, then $\nabla(Q,\theta)$ is equivalent to $\nabla(Q',\theta')$, where $Q'$ is obtained by contracting the arrow $a$, and $\theta'$ is obtained from $\theta$ by adding the values of $\theta$ on the end vertices  of $a$.  
\item[(v)] For any quiver $Q'$ and weight $\theta'\in\mathbb{Z}^{Q'_0}$, there exists a 
tight pair $(Q,\theta)$ such that the lattice polytopes $\nabla(Q',\theta')$ and 
$\nabla(Q,\theta)$ are equivalent; in fact $(Q,\theta)$ is obtained by successively removing or contracting arrows from $Q'$, until there is no contractible or removable arrow. 
\item[(vi)] If $Q$ is obtained from $Q'$ by contracting an arrow, then for any weight $\theta\in \mathbb{Z}^{Q_0}$ there exists a weight $\theta'\in \mathbb{Z}^{Q'_0}$ such that the lattice polytopes 
$\nabla(Q,\theta)$ and $\nabla(Q',\theta')$ are equivalent.  
\end{itemize}
\end{proposition}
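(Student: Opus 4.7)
The plan is to deduce all six parts from the explicit polyhedral description
\[\nabla(Q,\theta) \;=\; \{x \in \mathcal{A}(Q,\theta) \mid x(a) \geq 0 \text{ for all } a \in Q_1\}\]
together with the dimension count $\dim \mathcal{A}(Q,\theta) = |Q_1| - |Q_0| + \chi_0(Q) = \chi(Q)$, which holds because in each connected component exactly one flow conservation equation is a consequence of the others. For parts (i) and (ii), the key observation is that the two tightness conditions rule out exactly the two possible degeneracies of this description: condition (R) says that some inequality $x(a) \geq 0$ is always an equality on $\nabla(Q,\theta)$, which would make the candidate facet $F_a := \{x \in \nabla(Q,\theta) \mid x(a) = 0\}$ coincide with the whole polytope; condition (C) says that some inequality $x(a) \geq 0$ is redundant, which would reduce $F_a$ to a proper subface. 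Ruling both out simultaneously yields $\dim \nabla(Q,\theta) = \chi(Q)$ and shows that each $F_a$ is a genuine facet. For the distinctness claim in (i), I would argue that $F_a = F_b$ for $a \neq b$ forces the affine hyperplanes $\{x(a) = 0\}$ and $\{x(b) = 0\}$ in $\mathcal{A}(Q,\theta)$ to coincide; this makes the restrictions of $x(a)$ and $x(b)$ to $\mathcal{A}(Q,\theta)$ positively proportional (the positivity coming from both being non-negative on $\nabla(Q,\theta)$), so one of them becomes redundant, contradicting tightness.

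Parts (iii) and (iv) amount to exhibiting explicit lattice-preserving affine isomorphisms. In (iii), removability means $\nabla(Q,\theta) \subset \{x(a) = 0\}$, so the coordinate projection $\mathbb{R}^{Q_1} \to \mathbb{R}^{Q_1 \setminus \{a\}}$ restricts to the desired equivalence. In (iv), when we contract $a$ into the merged vertex $v$ and set $\theta'(v) := \theta(a^-) + \theta(a^+)$, the flow equation at $v$ in $Q'$ is exactly the sum of the equations at $a^-$ and $a^+$ in $Q$ (the $\pm x(a)$ terms cancel), so the coordinate projection forgetting $x(a)$ is a lattice-preserving affine isomorphism $\mathcal{A}(Q,\theta) \to \mathcal{A}(Q',\theta')$ whose inverse reconstructs $x(a)$ from the flow equation at $a^+$. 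Contractibility of $a$ ensures that the image of $\nabla(Q,\theta)$ coincides with $\nabla(Q',\theta')$. Part (v) then follows by induction on $|Q_1|$: whenever $(Q,\theta)$ is not tight, one of (iii) or (iv) yields an equivalent pair with one fewer arrow, and the process must terminate at a tight pair.

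The remaining item is (vi), which I expect to require the most care. Given $Q = Q'/a$ with merged vertex $v \in Q_0$ coming from the identification of $a^-, a^+ \in Q'_0$, the plan is to lift a weight $\theta$ on $Q$ to $Q'$ by setting $\theta'(a^+) := \theta(v) + N$, $\theta'(a^-) := -N$, and $\theta'(u) := \theta(u)$ for all other $u$, where $N$ is a sufficiently large positive integer. The summed flow equation at $v$ in $Q$ is then the sum of the flow equations at $a^{\pm}$ in $Q'$, so the coordinate $x(a)$ on $\mathcal{A}(Q',\theta')$ is uniquely determined by the remaining coordinates via the flow equation at $a^+$: it equals $\theta(v) + N$ minus the net inflow at $a^+$ through arrows other than $a$. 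Since the latter quantity descends to a linear function on $\nabla(Q,\theta)$ and is therefore uniformly bounded on that bounded polytope, for $N$ large enough $x(a) \geq 0$ is automatic whenever the other coordinates are non-negative; that is, $a$ is contractible for $\theta'$. Part (iv) then supplies the equivalence $\nabla(Q',\theta') \cong \nabla(Q,\theta)$. The main technical point is to verify that the projection of $\nabla(Q',\theta')$ onto the $Q$-coordinates really lands inside $\nabla(Q,\theta)$ and not in a larger polyhedron, which will follow from the agreement of the summed flow equation and the remaining non-negativity constraints.
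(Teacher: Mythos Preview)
The paper does not actually prove this proposition: the text immediately preceding it says that it is ``summarizing \cite[Corollary 4.5]{domokos-joo}, \cite[Proposition 4.2]{domokos-joo}, \cite[Lemma 4.4]{domokos-joo}, and \cite[Proposition 4.21]{domokos-joo}'', so the proof is deferred entirely to the earlier paper by the same authors. Your proposal therefore cannot be compared to a proof in the present paper, but it can be assessed on its own merits.

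Your direct polyhedral argument is sound. The core of (i)--(ii) is exactly the standard fact that an H-description $\{x\in V:\ell_i(x)\ge 0\}$ of a polytope in an affine space $V$ is \emph{irredundant and full-dimensional} precisely when no $\ell_i$ is identically zero on the polytope and no $\ell_i\ge 0$ is implied by the remaining inequalities; these are exactly the negations of (R) and (C). Under these hypotheses every $\{\ell_i=0\}$ cuts out a facet, so each $F_a$ is a facet and every facet arises this way. Your injectivity argument is correct: since $F_a$ is a facet it spans the hyperplane $\{x(a)=0\}\cap\mathcal{A}(Q,\theta)$, so $F_a=F_b$ forces the restrictions of $x(a)$ and $x(b)$ to be positively proportional on $\mathcal{A}(Q,\theta)$, and then either inequality is implied by the other, contradicting (C). Parts (iii)--(v) are routine, and your treatment is fine.

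For (vi) your strategy is correct, and the ``main technical point'' you flag is precisely what makes it work: the projection forgetting $x(a)$ gives a lattice affine isomorphism from $\{x\in\mathcal{A}(Q',\theta'):x(b)\ge 0\ \forall b\neq a\}$ onto $\nabla(Q,\theta)$ (in one direction because the summed flow equation at the merged vertex holds, in the other because $x(a)$ is recovered uniquely from the equation at $a^+$). The linear form you need to bound therefore lives on $\nabla(Q,\theta)$, which is bounded because the quivers in play are acyclic; you use this implicitly when you call it a ``bounded polytope'', and it is worth making that hypothesis explicit, since the statement of (vi) does not mention acyclicity. With that caveat the argument goes through: for $N$ exceeding the maximum of that form minus $\theta(v)$, the arrow $a$ is contractible for $\theta'$, and (iv) gives the equivalence.
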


\subsection{Products.} 
The \emph{product} of the polytopes $\nabla\subset \mathbb{R}^d$ and $\nabla'\subset \mathbb{R}^{d'}$ is the polytope $\nabla\times\nabla'\subset \mathbb{R}^d\times \mathbb{R}^{d'}=\mathbb{R}^{d+d'}$. If both $\nabla$ and $\nabla'$ are lattice polytopes, then 
$\nabla\times \nabla'$ is a lattice polytope as well. We call a positive dimensional lattice polytope \emph{prime} if it is not equivalent to a product of strictly smaller dimensional lattice polytopes.  Obviously, any positive dimensional lattice polytope is equivalent to a product of finitely many prime lattice polytopes. 

\begin{proposition}\label{prop:prime} Let $Q$ be an acyclic quiver with $\chi(Q)>0$, and let $(Q,\theta)$ be a tight pair. Then the following conditions are equivalent: 
\begin{itemize}
\item[(i)] The lattice polytope $\nabla(Q,\theta)$ is prime. 
\item[(ii)] $\nabla(Q,\theta)$ is not equivalent to a product of strictly smaller dimensional flow polytopes. 
\item[(iii)] The graph $\Gamma(Q)$ is prime. 
\end{itemize} 
\end{proposition}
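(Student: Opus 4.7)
The implication (i)$\Rightarrow$(ii) is immediate. For (ii)$\Rightarrow$(iii) I would argue the contrapositive. Assume $\Gamma(Q)$ is not prime; since $\chi(Q)>0$ forces at least one edge, either $\Gamma(Q)$ is disconnected or it has a cut vertex $v$. In the disconnected case, write $Q=Q^{(1)}\sqcup Q^{(2)}$ and set $\theta^{(i)}=\theta|_{Q^{(i)}_0}$. In the cut vertex case, partition $Q_1=A_1\sqcup A_2$ according to which connected component of $\Gamma(Q)\setminus\{v\}$ each arrow meets, let $Q^{(i)}$ be the subquiver on vertices $V_i\cup\{v\}$ with arrows $A_i$, and set $\theta^{(i)}(v)=-\sum_{w\in V_i}\theta(w)$, $\theta^{(i)}|_{V_i}=\theta|_{V_i}$. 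A direct flow-conservation check at $v$ yields $\nabla(Q,\theta)=\nabla(Q^{(1)},\theta^{(1)})\times\nabla(Q^{(2)},\theta^{(2)})$. Tightness of $(Q,\theta)$ excludes pendant arrows in $Q$ (the unique arrow at a vertex of valency one is forced to a constant value, hence is removable or contractible), so every vertex of each $Q^{(i)}$ has valency $\geq 2$, giving $\chi(Q^{(i)})\geq 1$. Combined with $\chi(Q)=\chi(Q^{(1)})+\chi(Q^{(2)})$ and Proposition~\ref{prop:tightness}(ii), this forces $\dim\nabla(Q^{(i)},\theta^{(i)})=\chi(Q^{(i)})\geq 1$, so both factors are strictly lower-dimensional flow polytopes.

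For (iii)$\Rightarrow$(i), assume $\Gamma(Q)$ is prime and suppose for contradiction that $\nabla(Q,\theta)\cong P_1\times P_2$ as lattice polytopes with $\dim P_i\geq 1$. By Proposition~\ref{prop:tightness}(i) the facets of $\nabla(Q,\theta)$ are indexed by $Q_1$, and the product structure partitions them into two nonempty classes, yielding $Q_1=A_1\sqcup A_2$. Let $\Gamma_i=(V_{A_i},A_i)$ be the corresponding subgraphs and $k=|V_{A_1}\cap V_{A_2}|$. Writing $N_Q=\ker(\partial)\subset\mathbb{R}^{Q_1}$ for the linear subspace parallel to $\mathcal{A}(Q,\theta)$, the product structure induces a decomposition $N_Q=L_1\oplus L_2$: because the facet $F_a$ for $a\in A_i$ pulls back to a facet of $P_1\times P_2$ of the form $F\times P_j$ with $j\ne i$, it contains a translate of the full $P_j$-direction, so the coordinate functional $x\mapsto x(a)$ vanishes on $L_j$; thus $L_j\subseteq N_Q\cap\mathrm{span}(e_a:a\in A_j)$, and the latter is the cycle space of $\Gamma_j$ of dimension $\chi(\Gamma_j)$. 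Therefore $\chi(Q)=\dim L_1+\dim L_2\leq\chi(\Gamma_1)+\chi(\Gamma_2)$.

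A Mayer--Vietoris count---using that $\Gamma_1\cap\Gamma_2$ is the set of $k$ isolated vertices in $V_{A_1}\cap V_{A_2}$---gives $\chi(Q)=\chi(\Gamma_1)+\chi(\Gamma_2)+k+1-\chi_0(\Gamma_1)-\chi_0(\Gamma_2)$, so $\chi_0(\Gamma_1)+\chi_0(\Gamma_2)\geq k+1$. Conversely, connectedness of $\Gamma(Q)$ forces the bipartite multigraph $B$ on the components of $\Gamma_1\sqcup\Gamma_2$, with one edge per shared vertex in $V_{A_1}\cap V_{A_2}$, to be connected, giving $\chi_0(\Gamma_1)+\chi_0(\Gamma_2)\leq k+1$. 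Equality holds and $B$ is a tree; since $\chi_0(\Gamma_1)+\chi_0(\Gamma_2)\geq 2$, the tree $B$ has a leaf---a component $C$ of, say, $\Gamma_1$ containing exactly one shared vertex $v$. Then $C\setminus\{v\}$ is nonempty and disconnected from the rest of $\Gamma(Q)\setminus\{v\}$, so $v$ is a cut vertex of $\Gamma(Q)$, contradicting primality. The main obstacle I anticipate is the linear-algebra step in (iii)$\Rightarrow$(i), namely extracting the decomposition $N_Q=L_1\oplus L_2$ with $L_j\subseteq N_Q\cap\mathrm{span}(A_j)$ from the abstract lattice-polytope product equivalence.
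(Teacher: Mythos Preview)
Your argument is correct, and it takes a genuinely different route from the paper. The paper's proof of (ii)$\Rightarrow$(iii) simply cites \cite[Proposition 4.10(i)]{domokos-joo}, and for (iii)$\Rightarrow$(i) it passes through projective toric varieties: by \cite[Theorem 4.12(ii)]{domokos-joo} the variety $X_{\nabla(Q,\theta)}$ is not a nontrivial product, and since $X_{\nabla\times\nabla'}\cong X_\nabla\times X_{\nabla'}$, the polytope must be prime. Your proof, by contrast, is entirely polyhedral and self-contained: the cycle-space inclusion $L_j\subseteq N_Q\cap\mathrm{span}(e_a:a\in A_j)$ together with the Euler-characteristic identity and the tree argument on the component graph $B$ replaces the toric machinery. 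What you gain is an elementary proof that does not depend on the algebro-geometric results of \cite{domokos-joo}; what the paper's route buys is brevity and a conceptual link to the moduli-theoretic picture.

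Two small remarks. First, in (ii)$\Rightarrow$(iii) your sentence ``every vertex of each $Q^{(i)}$ has valency $\geq 2$'' is not literally true: the cut vertex $v$ itself may have valency $1$ in $Q^{(i)}$. But your conclusion $\chi(Q^{(i)})\geq 1$ still holds, since the valency-$\geq 2$ bound at the vertices of $V_i$ alone gives $2|V_i|\leq 2|A_i|-k_i$ (with $k_i\geq 1$ the number of edges at $v$ in $A_i$), hence $\chi(Q^{(i)})=|A_i|-|V_i|\geq k_i/2>0$. The dimension equality $\dim\nabla(Q^{(i)},\theta^{(i)})=\chi(Q^{(i)})$ then follows, as you indicate, from Proposition~\ref{prop:tightness}(ii) applied to the \emph{original} tight pair $(Q,\theta)$ together with the product decomposition and the general bound $\dim\nabla(Q^{(i)},\theta^{(i)})\leq\chi(Q^{(i)})$. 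Second, the step you flagged as the main obstacle is in fact solid: once $\varphi(F_a)=G\times P_2$ for $a\in A_1$, the affine span of $F_a$ has linear part containing $L_2=\varphi_{\mathrm{lin}}^{-1}(N_{P_2})$, and since $F_a$ is a genuine facet this linear part equals $\{v\in N_Q:v(a)=0\}$, giving $L_2\subseteq N_Q\cap\mathrm{span}(e_a:a\in A_2)$ exactly as you need.
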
 

\begin{proof} The implication (i)$\Rightarrow$(ii) is trivial. The implication (ii)$\Rightarrow$(iii) is \cite[Proposition 4.10 (i)]{domokos-joo}. 
Assume now that (iii) holds. By \cite[Theorem 4.12 (ii)]{domokos-joo}, the projective toric variety 
corresponding to the polytope $\nabla(Q,\theta)$ is not isomorphic to a product of strictly smaller dimensional projective toric varieties. This implies that $\nabla(Q,\theta)$ is prime, 
because the projective toric variety associated to the product $\nabla\times \nabla'$ of polytopes is isomorphic  to the product of the projective toric varieties associated to $\nabla$ and $\nabla'$ 
(see e.g. \cite{cox-little-schenck} for the correspondence between lattice polytopes and projective toric varieties). 
\end{proof} 

\subsection{A quiver construction.} 
The following construction of quivers plays essential role in our studies. 
Given a graph $\Gamma$, we denote by $\Gamma^*$ the quiver obtained 
by \emph{putting a sink on each edge} of $\Gamma$. In more details,  
$\Gamma^*_0=\Gamma_0\bigcup \{v_e\mid e\in \Gamma_1\}$ (so we introduce a new vertex for each edge in $\Gamma$),  and  
replace each edge $e$ of $\Gamma$ (denote by $e^-$, $e^+\in \Gamma_0$ its end vertices)   by a new vertex $v_e$ and two arrows $a_1,a_2$ with $a_1^+=v_e=a_2^+$, 
$a_1^-=e^-$, and $a_2^-=e^+$, as illustrated on the figure below. 
\[
\begin{tikzpicture}[>=latex,scale=0.8] 
\foreach \x in {(0,0),(0,2)} \filldraw \x circle (2pt);
\draw  [-]  (0,0)--(0,2);
\draw  [-]  (0,0) to [out=45,in=-45] (0,2);
\draw  [-]  (0,0) to [out=135,in=-135] (0,2);
\node[left] at (-0.75,1) {$\Gamma$};
\foreach \x in {(4,0),(4,1),(3,1),(5,1),(4,2)} \filldraw \x circle (2pt);
\node[left] at (2.75,1) {$\Gamma^*$};
\draw  [->]  (4,0)--(4,1);
\draw  [->]  (4,2)--(4,1);
\draw  [->]  (4,0)--(3,1);
\draw  [->]  (4,2)--(3,1);
\draw  [->]  (4,0)--(5,1);
\draw  [->]  (4,2)--(5,1);
\end{tikzpicture}
\]
 Note that $\chi(\Gamma^*)=\chi(\Gamma)$, 
 and $\mathcal{C}(\Gamma)=\mathcal{C}(\Gamma(\Gamma^*))$.

\begin{lemma}\label{lemma:reduction} 
Suppose that $Q$ has a vertex $v$ which is a valency $2$ sink, $a\neq b$ are the arrows with $a^+=v=b^+$, $a^-$ is a source in $Q$ (i.e. there are now arrows pointing to $a^-$), 
$a^-\neq b^-$, and $\theta$ is a weight with 
$\theta(a^-)=-1$, $\theta(v)=1$. 
Then the arrow $b$ is contractible for $(Q,\theta)$, and hence $\nabla(Q,\theta)$ is equivalent to 
$\nabla(Q',\theta')$ where $(Q',\theta')$ is obtained from $(Q,\theta)$ after contracting $b$: 
\[\begin{tikzpicture}[>=latex] 
\node [right] at (2.5,0.5) {$\mapsto$}; 
\node [below] at (0,0) {\scriptsize{$-1$}}; \node [below] at (2,0) {\scriptsize{$\theta(b^-)$}}; 
\foreach \x in {(0,0),(1,1),(2,0),(4,0.5),(6,0.5)} \filldraw \x circle (2pt); 
\node [above] at (1,1) {\scriptsize{$1$}};  \node [above, left] at (0.5,0.6) {$a$}; \node[above,right] at (1.5,0.6) {$b$}; 
\draw [->, thick] (0,0)--(1,1); \draw [<-, thick] (1,1)--(2,0);  \draw [->, thick] (4,0.5)--(6,0.5);
\node at (5,0.7) {$c$};
\node [below] at (4,0.5) {\scriptsize{$-1$}}; \node [below] at (6,0.5) {\scriptsize{$\theta(b^-)+1$}};
\end{tikzpicture} \]
\end{lemma}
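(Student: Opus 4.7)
The plan is to verify condition (C) directly and then invoke Proposition~\ref{prop:tightness}(iv). So I would take an arbitrary $x\in\mathcal{A}(Q,\theta)$ with $x(c)\ge 0$ for every arrow $c\in Q_1\setminus\{b\}$, and show that $x(b)\ge 0$ automatically.

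First I would write down the two flow equations that matter. At the sink $v$, which has valency $2$ with $a^+=v=b^+$, the flow equation reads $x(a)+x(b)=\theta(v)=1$. At the source $a^-$, no arrows point in, so the equation becomes $-\sum_{c^-=a^-}x(c)=\theta(a^-)=-1$, i.e.\ $\sum_{c^-=a^-}x(c)=1$. Here the key point is that the arrow $b$ does \emph{not} appear in this second sum: we have $b^-\neq a^-$ by hypothesis, and $b^+=v\neq a^-$ because $a$ is an arrow from $a^-$ to $v$ and $Q$ has no loops. Consequently every term on the left is of the form $x(c)$ with $c\neq b$, hence $\ge 0$, giving $x(a)\le 1$. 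Combined with the equation at $v$, this yields $x(b)=1-x(a)\ge 0$, as required.

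Having shown that $b$ is contractible for $\theta$, Proposition~\ref{prop:tightness}(iv) immediately gives that $\nabla(Q,\theta)$ is equivalent to $\nabla(Q',\theta')$, where $Q'$ is obtained from $Q$ by contracting $b$ and $\theta'$ is obtained by adding $\theta(b^-)$ and $\theta(v)=1$ at the merged vertex. The picture in the statement exhibits precisely this: the two-arrow path collapses to a single arrow $c$, and the new weight at the merged vertex equals $\theta(b^-)+1$, while $\theta(a^-)=-1$ is unchanged.

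I do not foresee a genuine obstacle here; the argument is a short bookkeeping exercise with the flow equations. The only place where one must be attentive is the verification that $b$ is absent from the flow equation at $a^-$, which is exactly why the hypotheses $a^-\neq b^-$ and the valency~$2$ sink structure at $v$ are built into the statement.
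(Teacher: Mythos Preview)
Your proof is correct and follows essentially the same approach as the paper's: use the flow equation at the source $a^-$ together with nonnegativity of the other arrows starting there to obtain $x(a)\le 1$, then combine with the equation $x(a)+x(b)=1$ at the sink $v$ to deduce $x(b)\ge 0$, and finish by invoking Proposition~\ref{prop:tightness}(iv). The paper's version is slightly terser (it only requires nonnegativity on arrows with tail $a^-$, a weaker hypothesis than all of $Q_1\setminus\{b\}$), but the argument is the same.
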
 

\begin{proof} 
Since $a^-$ is a source vertex in $Q$, $\theta(a^-)=-1$ implies that 
$x(a)\le 1$ for all $x\in H:= \mathcal{A}(Q,\theta)\cap \{y\in \mathbb{R}^{Q_1}\mid  y(d)\ge 0 \mbox{ if }d^-=a^-\}$. 
Note that $x(b)=\theta(v)-x(a)=1-x(a)$ for any $x\in \mathcal{A}(Q,\theta)$. Therefore for $x\in H$ 
we have $x(b)\ge 0$. 
It follows that the arrow $b$ is contractible, and  so 
$\nabla(Q,\theta)$ is equivalent to $\nabla(Q',\theta')$ by Proposition~\ref{prop:tightness} (iv).  
\end{proof} 

\subsection{First steps towards a classification.}  
We summarize below consequences of some basic  reduction steps from \cite{domokos-joo} 
(similar reductions in the special case of the canonical weight were obtained before in  \cite{altmann-nill-schwentner-wiercinska}) 
that form the basis of the classification of 
flow polytopes up to equivalence. 

\begin{proposition} \label{prop:valency 2}
Let $Q$ be a connected acyclic quiver with $Q_1\neq\emptyset$, and let $\theta\in \mathbb{Z}^{Q_0}$ be a weight. 
\begin{itemize}
\item[(i)] If there is no contractible arrow in $Q_1$ for $\theta$,
then each vertex of $Q$ has valency at least $2$. 
Moreover,  each valency $2$ vertex is a sink or a source, and no valency $2$ vertices in $Q$ are 
connected by an arrow, 
unless $Q$ is the Kronecker quiver 
(consisting of two vertices and two arrows from one of these vertices to the other). 
\item[(ii)] Suppose that $v$ is a valency $2$ source in $Q$, 
$a,b\in Q_1$ with $a^-=v=b^-$, and assume that $a^+\neq b^+$. 
Denote by $Q'$ the quiver obtained by reversing 
the two arrows $a,b$. Then for any weight $\theta\in \mathbb{Z}^{Q_0}$ we have that 
$\nabla(Q,\theta)\cong \nabla(Q',\theta')$, where $\theta'(v)=-\theta(v)$, $\theta'(a^-)=\theta(a^-)+\theta(v)$, 
$\theta'(b^-)=\theta(b^-)+\theta(v)$, and $\theta'$ agrees with $\theta$ on the remaining vertices. 
\end{itemize} 
\end{proposition}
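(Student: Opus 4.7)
The plan is to prove parts (i) and (ii) independently: part (i) is a case analysis producing a contractible arrow whenever the stated conclusion fails, while part (ii) is an explicit coordinate-swap isomorphism.

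For part (i) I would argue by the contrapositive. Valency $0$ is immediately ruled out since $Q$ is connected with $Q_1\neq\emptyset$. If $v$ has valency $1$, the unique arrow $a$ incident to $v$ has its value $x(a)$ completely determined by the flow equation at $v$, namely $x(a)=\pm\theta(v)$; hence either $x(a)\ge 0$ holds on all of $\mathcal{A}(Q,\theta)$ (so $a$ is contractible) or $\nabla(Q,\theta)=\emptyset$ (in which case every arrow is vacuously contractible). If $v$ has valency $2$ with one incoming arrow $a$ and one outgoing arrow $b$, then $x(a)-x(b)=\theta(v)$, and depending on the sign of $\theta(v)$, either the inequality $x(b)\ge 0$ forces $x(a)\ge 0$ or vice versa, giving a contractible arrow.

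The most delicate subcase is the Kronecker exception. Suppose distinct valency-$2$ vertices $v,w$ are joined by an arrow $a$. By the earlier subclaims each of $v,w$ is a sink or source, and since $a$ connects them one must be a source and the other a sink; say $a$ goes from the source $v$ to the sink $w$. Let $b\neq a$ be the second arrow at $v$ and $c\neq a$ the second arrow at $w$. If $b=c$ then both extra arrows collapse into a single arrow from $v$ to $w$, forcing $Q$ (by connectedness and the valency hypothesis at $v,w$) to consist exactly of two vertices with two parallel arrows, i.e.\ the Kronecker quiver. Otherwise $b\neq c$, and combining the flow equations at $v$ and $w$ yields
\[x(c)-x(b)=\theta(v)+\theta(w),\]
so according to the sign of $\theta(v)+\theta(w)$ either $c$ is contractible (from $x(b)\ge 0$) or $b$ is contractible (from $x(c)\ge 0$).

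For part (ii) I would identify the arrow sets $Q_1=Q'_1$ set-theoretically and define $\varphi\colon\mathbb{R}^{Q_1}\to\mathbb{R}^{Q'_1}$ by swapping the two coordinates at $a$ and $b$: $\varphi(x)(a)=x(b)$, $\varphi(x)(b)=x(a)$, and $\varphi(x)(e)=x(e)$ for $e\notin\{a,b\}$. This is a lattice-preserving linear involution, so it suffices to verify that it restricts to a bijection $\nabla(Q,\theta)\to\nabla(Q',\theta')$ when $\theta'$ is defined as stated (with the convention that the $a^-,b^-$ in the statement refer to the tails in $Q'$, i.e.\ $a^+,b^+$ in $Q$). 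At $v$ the source constraint $x(a)+x(b)=-\theta(v)$ matches the sink constraint $y(a)+y(b)=\theta'(v)=-\theta(v)$; at the head $a^+$ (which exists in $Q'$ as a tail of the reversed $a$) the sign of the $a$-contribution flips, and using $y(a)=x(b)$ together with $x(a)+x(b)=-\theta(v)$ one sees the flow balance at $a^+$ in $Q'$ reads $\theta(a^+)+\theta(v)$, as required; the analogous identity holds at $b^+$ by symmetry, and the hypothesis $a^+\neq b^+$ ensures the two shifts do not overlap. Preservation of non-negativity and of the lattice is automatic since $\varphi$ permutes coordinates.

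The main obstacle I expect is the third subclaim of part (i): one must track not only the forced source/sink roles of $v,w$ but also the sign analysis of $\theta(v)+\theta(w)$, and recognize that the single algebraic coincidence $b=c$ is precisely what produces the Kronecker exception. The rest -- including the verification in part (ii) -- is bookkeeping in flow equations.
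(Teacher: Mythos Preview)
The paper does not argue this proposition directly: it cites \cite[Corollary~4.7 and Proposition~4.8]{domokos-joo} for parts~(i) and~(ii) respectively, after remarking that an arrow with a valency-$1$ endpoint is contractible. Your self-contained case analysis is therefore a genuinely different, more elementary route, and it is essentially correct with one repair needed.

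In the valency-$1$ subcase you assert that if the constant value of $x(a)$ is negative then $\nabla(Q,\theta)=\emptyset$ and ``every arrow is vacuously contractible.'' That is not how condition~(C) works: it compares half-spaces inside $\mathcal{A}(Q,\theta)$, not inside $\nabla(Q,\theta)$, so emptiness of the polytope does not make~(C) vacuous. Concretely, take three arrows $a,b,c$ all pointing into a common sink, with the weight forcing $x(a)=-1$ and $x(b)=x(c)=1$ identically on $\mathcal{A}(Q,\theta)$; then $\nabla=\emptyset$, yet $a$ fails~(C) since $\{x:x(b)\ge 0,\ x(c)\ge 0\}$ is nonempty while $\{x:x(a)\ge 0\}$ is empty. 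The easy fix: when $x(a)$ is a negative constant, every arrow $e\neq a$ is contractible, because the set $\{x\in\mathcal{A}(Q,\theta): x(f)\ge 0\ \forall f\neq e\}$ already requires $x(a)\ge 0$ and is hence empty; this suffices once $|Q_1|\ge 2$. The remaining subcases of~(i)---the valency-$2$ pass-through vertex, and the adjacent valency-$2$ pair yielding $x(c)-x(b)=\theta(v)+\theta(w)$ with the Kronecker exception at $b=c$---are correct as written. Your coordinate-swap for part~(ii) is also correct; the swap is exactly what lets the constant $x(a)+x(b)=-\theta(v)$ absorb the sign change of the $a$- and $b$-contributions at $a^{+}$ and $b^{+}$.
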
 
\begin{proof} 
Obviously, if $\mathrm{valency}_Q(a^-)=1$ or $\mathrm{valency}_Q(a^+)=1$, then $a$ is contractible for any weight $\theta$, so  
(i) is an immediate  consequence of \cite[Corollary 4.7]{domokos-joo}, whereas (ii) is \cite[Proposition 4.8]{domokos-joo}. 
\end{proof} 

\begin{proposition}\label{prop:Gamma^*}
Let $Q$ be an acyclic quiver with $\Gamma(Q)$ prime and $\chi(Q)\ge 1$.  
 If $\Gamma$ is any graph such that $\mathcal{C}(\Gamma(Q))$ is a contracted descendant of $\Gamma$, then for any weight 
 $\theta\in \mathbb{Z}^{Q_0}$ there is a weight $\sigma\in\mathbb{Z}^{\Gamma^*_0}$ such that 
 $\nabla(Q,\theta)\cong \nabla(\Gamma^*,\sigma)$. 
\end{proposition}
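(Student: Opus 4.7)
My plan is to build the weight $\sigma$ by successively enlarging $(Q,\theta)$ through un-contractions, using Proposition~\ref{prop:tightness}(vi) at every step so that the flow polytope is preserved up to equivalence.

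The basic tool is a V-shape un-contraction. For any arrow $a\colon u\to w$ of a quiver $Q$, introduce a new vertex $v_a$ together with two new arrows $u\to v_a$ and $w\to v_a$, obtaining a quiver $Q^{+}$ in which $a$ is replaced by this ``V''. In $Q^{+}$ the arrow $w\to v_a$ is the unique arrow joining $w$ and $v_a$, so contracting it identifies $w$ with $v_a$ and turns $u\to v_a$ back into $a\colon u\to w$. Thus $Q$ is a contraction of $Q^{+}$, and Proposition~\ref{prop:tightness}(vi) supplies a weight $\theta^{+}$ on $Q^{+}$ with $\nabla(Q,\theta)\cong\nabla(Q^{+},\theta^{+})$. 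Performing this step iteratively for every arrow of $Q$ assembles the quiver $\Gamma(Q)^{*}$ together with a weight $\sigma_0$ such that $\nabla(Q,\theta)\cong\nabla(\Gamma(Q)^{*},\sigma_0)$, which proves the proposition in the special case $\Gamma=\Gamma(Q)$.

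For an arbitrary $\Gamma$ satisfying the hypothesis I would first replace $(Q,\theta)$ by an equivalent tight pair via Proposition~\ref{prop:tightness}(v). Since $\mathcal{C}(\Gamma(Q))$ is a common contracted descendant of $\Gamma(Q)$ and $\Gamma$, one can un-contract $Q$ to a quiver $\widehat Q$ whose underlying graph $\widehat\Gamma$ admits both $\Gamma(Q)$ and $\Gamma$ as contracted descendants, choosing the orientations of the new arrows so that the edges separating $\widehat\Gamma$ from $\Gamma$ correspond to arrows that are contractible in the sense of Section~\ref{sec:tightness}: Proposition~\ref{prop:tightness}(vi) supplies the weights at each un-contraction step, Proposition~\ref{prop:tightness}(iv) then handles the subsequent contractions down to a pair $(\widetilde Q,\widetilde\theta)$ with $\Gamma(\widetilde Q)=\Gamma$, and Proposition~\ref{prop:valency 2}(ii) is applied wherever a reorientation at a valency-2 source or sink is required to align the directions of the new arrows with the desired shape. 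Applying the V-uncontraction of the previous paragraph to $\widetilde Q$ finally produces the weight $\sigma$ on $\Gamma^{*}$ with $\nabla(Q,\theta)\cong\nabla(\Gamma^{*},\sigma)$.

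The main obstacle is the intermediate passage from $\Gamma(Q)$ to $\Gamma$: one must orient the un-contracted arrows of $\widehat Q$ so that the later contractions really are available at the quiver level, and arrange the reorientations coming from Proposition~\ref{prop:valency 2}(ii) so that they resolve all orientation conflicts coherently. The primeness of $\Gamma(Q)$ and the rigidity imposed on the valency-2 vertices of a tight pair by Proposition~\ref{prop:valency 2}(i) (they must be sources or sinks and cannot be adjacent to each other) are what keep this finite combinatorial procedure from getting stuck.
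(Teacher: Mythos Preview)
Your first paragraph is correct: the V-uncontraction is precisely the mechanism behind Proposition~\ref{prop:tightness}(vi), and iterating it over all arrows takes $(Q,\theta)$ to $(\Gamma(Q)^*,\sigma_0)$ with the polytope preserved.

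The plan for general $\Gamma$, however, has a genuine gap exactly where you flag the ``main obstacle''. You need a common un-contraction $\widehat\Gamma$ of $\Gamma(Q)$ and $\Gamma$, but you never argue that one exists; two graphs sharing a common contracted descendant need not share a common ancestor in the contraction order. Even granting $\widehat\Gamma$, the subsequent contractions from $\widehat Q$ down to $\widetilde Q$ require those specific arrows to be \emph{contractible for the weight} (condition~(C)), which is a polytope condition rather than a purely combinatorial one. You invoke Proposition~\ref{prop:tightness}(iv) and Proposition~\ref{prop:valency 2}(ii), but neither guarantees that the particular arrows you must contract satisfy~(C); also, tightening via Proposition~\ref{prop:tightness}(v) may remove arrows and hence alter the chassis, so the hypothesis on $\mathcal{C}(\Gamma(Q))$ need not survive that first move.

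The paper avoids all of this by reversing the order of operations. Instead of un-contracting $Q$ upward toward $\Gamma$, it uses Proposition~\ref{prop:valency 2}(i),(ii) to simplify $(Q,\theta)$ \emph{downward} (contracting contractible arrows at low-valency vertices, reversing at valency-$2$ sources) to a pair $(Q',\theta')$ in which every valency-$2$ vertex is a sink and no two are adjacent, while $\mathcal{C}(\Gamma(Q'))=\mathcal{C}(\Gamma(Q))$. Such a $Q'$ is visibly a contraction of $\mathcal{C}(\Gamma(Q'))^*$, hence of $\Gamma^*$, and a single application of Proposition~\ref{prop:tightness}(vi) finishes. The down-then-up route lands directly on a contraction of $\Gamma^*$, so no common-ancestor construction and no weight-contractibility bookkeeping for the upward leg is ever needed.
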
 

\begin{proof} 
Assume first that $\chi(Q)\ge 2$. 
By Proposition~\ref{prop:valency 2} (i), (ii) contracting successively contractible arrows adjacent to valency $\le 2$ vertices and reversing arrows at valency $2$ sources we can get from $(Q,\theta)$ to $(Q',\theta')$ such that 
$\nabla(Q,\theta)\cong \nabla(Q',\theta')$, $\mathcal{C}(\Gamma(Q'))= \mathcal{C}(\Gamma(Q))$, all vertices of $Q'$ have valency at least $2$, no valency $2$ vertices are connected by an arrow, and all valency $2$ vertices of $Q'$ are sinks. In particular, $Q'$ is obtained from $\mathcal{C}(\Gamma(Q'))^*$ by contracting arrows. 
Recall that 
$\mathcal{C}(\Gamma(Q'))= \mathcal{C}(\Gamma(Q))$ is a contracted descendant 
of $\Gamma$ by assumption. Therefore $Q'$ is obtained from $\Gamma^*$ by successively contracting arrows. 
Thus by Proposition~\ref{prop:tightness} (vi), there exists a weight $\sigma\in \mathbb{Z}^{(\Gamma^*)_0}$ with 
$\nabla(Q',\theta')\cong \nabla(\Gamma^*,\sigma)$. 

The case $\chi(Q)=1$ is similar. 
Then $\mathcal{C}(\Gamma(Q))$ is a 
$2$-cycle, and by Proposition~\ref{prop:valency 2} (i),
$\nabla(Q,\theta)\cong \nabla(Q',\theta')$ where $Q'$ is the Kronecker quiver. 
\end{proof}

\begin{proposition}\label{prop:3-regular} 
Let $\nabla$ be a $d$-dimensional prime flow polytope where $d\ge 1$. 
\begin{itemize}
\item[(i)] There exists a graph $\Gamma\in \mathcal{L}_d$, a quiver $Q$ obtained by putting a sink on \emph{some} of the edges of $\Gamma$ and orienting somehow the remaining edges (in particular, $\mathcal{C}(\Gamma(Q))=\Gamma$), and a weight $\theta\in \mathbb{Z}^{Q_0}$ such that the pair $(Q,\theta)$ is tight, and $\nabla\cong \nabla(Q,\theta)$. 
 \item[(ii)]  If $d\ge 2$, then there exists a graph $\Gamma\in\mathcal{L}_d^{\mathrm{3-reg}}$ and a weight 
$\sigma\in \mathbb{Z}^{\Gamma^*_0}$ with $\nabla\cong \nabla(\Gamma^*,\sigma)$. 
\end{itemize}
\end{proposition}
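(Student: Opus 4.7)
The plan is to prove~(i) first by reducing $\nabla$ to a tight representative whose valency-$2$ vertices are all sinks, and then to deduce~(ii) by applying Proposition~\ref{prop:Gamma^*} after enlarging the resulting $\Gamma\in\mathcal{L}_d$ to a $3$-regular graph via the construction at the end of Section~\ref{sec:graphs}.

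For~(i), I would first invoke Proposition~\ref{prop:tightness}(v) to pick a tight pair $(Q',\theta')$ with $\nabla\cong\nabla(Q',\theta')$; then $\chi(Q')=d$ by Proposition~\ref{prop:tightness}(ii), and $\Gamma(Q')$ is prime by Proposition~\ref{prop:prime}. The case $d=1$ is handled separately: Proposition~\ref{prop:valency 2}(i) gives all valencies at least $2$, and combined with the identity $2|Q'_1|=2|Q'_0|$ (valid since $\chi(Q')=1$ and $Q'$ is connected) this forces all valencies to equal $2$; the same proposition then forces $Q'$ to be the Kronecker quiver, which already presents $\nabla$ in the required form over the unique $\Gamma\in\mathcal{L}_1$, with both edges merely oriented and no sinks placed. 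For $d\ge 2$, I would repeatedly apply Proposition~\ref{prop:valency 2}(ii) to reverse the two arrows at each valency-$2$ source $v$ of $Q'$. The required hypothesis $a^+\neq b^+$ is automatic: if it failed, the common target $w=a^+=b^+$ would be joined to $v$ by two parallel arrows, forcing either $|Q'_0|=2$ (whence $Q'$ would be Kronecker and $d=1$, a contradiction) or $w$ to be a cut vertex of $\Gamma(Q')$, contradicting primality. Each reversal leaves the underlying graph unchanged and, by a facet count discussed below, also preserves tightness.

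After finitely many reversals I reach a tight $(Q,\theta)$ with every valency-$2$ vertex a sink. Setting $\Gamma:=\mathcal{C}(\Gamma(Q))$, primality precludes bridges in $\Gamma(Q)$, so $\Gamma$ arises from $\Gamma(Q)$ by collapsing each valency-$2$ sink together with its two incident arrows into a single edge; this gives $\Gamma\in\mathcal{L}_d$ (prime, all valencies at least $3$, and $\chi=d$) and exhibits $Q$ as the result of either orienting an edge of $\Gamma$ (when that edge comes from an edge of $\Gamma(Q)$) or placing a sink on it (when it comes from a length-two path through a valency-$2$ sink of $Q$), finishing~(i). For~(ii), the explicit construction at the end of Section~\ref{sec:graphs} supplies a $\widetilde{\Gamma}\in\mathcal{L}_d^{\mathrm{3-reg}}$ of which $\Gamma$ is a contracted descendant, and Proposition~\ref{prop:Gamma^*} applied to $(Q,\theta)$ and $\widetilde{\Gamma}$ yields a weight $\sigma\in\mathbb{Z}^{\widetilde{\Gamma}^*_0}$ with $\nabla\cong\nabla(\widetilde{\Gamma}^*,\sigma)$.

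The main obstacle I anticipate is showing that the arrow-reversal in Proposition~\ref{prop:valency 2}(ii) preserves tightness, which is not part of the cited statement. My facet-count argument: $(Q',\theta')$ has exactly $|Q'_1|$ facets by Proposition~\ref{prop:tightness}(i); the reversal yields an equivalent polytope with the same arrow count; full-dimensionality of $\nabla$ in the new affine space (inherited from $\dim\nabla=\chi(Q')$) rules out removable arrows; and a contractible arrow would force the facet count to drop strictly below $|Q'_1|$, a contradiction. Should that route prove delicate, an alternative is to re-tighten the reversed pair via Proposition~\ref{prop:tightness}(v) and verify that no contraction or removal is forced at the valency-$2$ sink just created, since all its neighbours still have valency $\ge 3$ by Proposition~\ref{prop:valency 2}(i).
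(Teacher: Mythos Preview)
Your proposal is correct and follows essentially the same route as the paper: part~(i) is obtained from Proposition~\ref{prop:tightness}(v), Proposition~\ref{prop:prime}, and Proposition~\ref{prop:valency 2}, and part~(ii) is obtained by combining~(i) with Proposition~\ref{prop:Gamma^*} and the observation from Section~\ref{sec:graphs} that every graph in $\mathcal{L}_d$ is a contracted descendant of some $\Gamma\in\mathcal{L}_d^{\mathrm{3-reg}}$. Your facet-count argument for why the reversal of Proposition~\ref{prop:valency 2}(ii) preserves tightness is a valid and useful addition that the paper's one-line proof leaves implicit; note only that the step ``full-dimensionality rules out removable arrows'' tacitly uses that $\Gamma(Q')$ has no bridges (which you do invoke later), since this is what guarantees that each coordinate $x(a)$ is non-constant on the affine space $\mathcal{A}(Q'',\theta'')$.
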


\begin{proof} (i) follows from Proposition~\ref{prop:tightness} (v), Proposition~\ref{prop:prime}, and Proposition~\ref{prop:valency 2}, whereas (ii) follows by Proposition~\ref{prop:Gamma^*}  
from (i) and the observation that each graph in $\mathcal{L}_d$ is a contracted descendant of some graph in 
$\mathcal{L}_d^{\mathrm{3-reg}}$. 
\end{proof} 

\begin{remark} The pair $(\Gamma^*,\sigma)$ in Proposition~\ref{prop:3-regular} (ii) may not be tight, unlike the 
pair $(Q,\theta)$ in Proposition~\ref{prop:3-regular} (i). 
\end{remark} 

%%%%%%%%%%%%%%%%%%%%%%

\section{A regular hyperplane subdivision of flow polytopes} \label{sec:regular subdivision} 

Let $Q$ be a quiver, and $\theta\in\mathbb{Z}^{Q_0}$ a weight  such that the flow polytope 
$\nabla(Q,\theta)$ is positive dimensional; 
set $d=\dim(\nabla(Q,\theta))$. 
We start with the obvious 
equality 
\[\nabla(Q,\theta)=\bigcup_{\underline{k}\in \mathbb{N}_0^{Q_1}}
\nabla(Q,\theta,\underline{k},\underline{k}+\underline{1})\]
where $\underline{1}=(1,1,\dots,1)\in\mathbb{N}_0^{Q_1}$. 
If $\nabla(Q,\theta,\underline{k},\underline{k}+\underline{1})$ in non-empty, 
then there exists a lattice point $x\in \nabla(Q,\theta)$ such that $k(a)\le x(a)$ for all 
$a\in Q_1$. As there are only finitely many lattice points in $\nabla(Q,\theta)$, there are only finitely many non-negative integral vectors $\underline{k}$ with 
$\nabla(Q,\theta,\underline{k},\underline{k}+\underline{1})$ non-empty. 
Denote by $\underline{k}^{(1)},\dots,\underline{k}^{(q)}$ those for which 
$\dim(\nabla(Q,\theta,\underline{k},\underline{k}+\underline{1}))=d$. 
We obtained the subdivision 
\begin{equation} \label{eq:subdivision} \nabla(Q,\theta)=\bigcup_{i=1}^q\nabla(Q,\theta,\underline{k}^{(i)},\underline{k}^{(i)}+\underline{1})
\end{equation} 
of $\nabla(Q,\theta)$ into finitely many $d$-dimensional flow polytopes. To get this subdivision we sliced the polytope 
$\nabla(Q,\theta)$ along a finite set of hyperplanes (namely hyperplanes of the form 
$\{x\in\mathcal{A}(Q,\theta)\mid x(a)=k\}$, where $k\in \mathbb{N}_0$). 
Therefore the  subdivision \eqref{eq:subdivision} is regular, see for example \cite[Section 1.F, p. 35]{bruns-gubeladze}. 

\begin{definition} The flow polytopes  $\nabla(Q,\theta,\underline{k}^{(i)},\underline{k}^{(i)}+\underline{1})$ 
($i=1,\dots,q$) appearing in \eqref{eq:subdivision} are called the \emph{cells of} $(Q,\theta)$. 
They are considered up to equivalence. 
\end{definition}

\begin{remark} 
Although the cells of $(Q,\theta)$ are polytopes considered up to equivalence, they depend on the quiver $Q$ and the weight $\theta$, and not just on the equivalence class of the polytope $\nabla(Q,\theta)$. In other words, up to equivalence a flow polytope can be represented by many 
(quiver,weight) pairs, and  different representations may lead to essentially different cell decompositions. 
We shall make use of this flexibility in proofs later. 
\end{remark} 

\begin{definition}\label{def:P(Gamma)} 
For $\Gamma\in \mathcal{L}_d$  (note that $d=\chi(\Gamma)$) set 
\[\mathcal{P}(\Gamma):=\{\nabla\mid \nabla \text{ is a cell of }(\Gamma^*,\theta) \text{ for some }
\theta\text{ with }\dim(\nabla(\Gamma^*,\theta))=d\}.\] 
\end{definition}  

Our interest in $\mathcal{P}(\Gamma)$ stems from Lemma~\ref{lemma:regular subdivision} below. 

\begin{lemma}\label{lemma:regular subdivision} 
Let $\nabla=\nabla(Q,\theta)$ be a $d$-dimensional prime flow polytope where $\Gamma(Q)$ is prime and $d=\chi(Q)$. 
If $\Gamma\in \mathcal{L}_d$ is a graph such that $\mathcal{C}(\Gamma(Q))$ is a contracted descendant of 
$\Gamma$, then  $\nabla$ has 
a regular hyperplane subdivision 
$\nabla=\nabla_1\cup\dots\cup\nabla_q$  
where each $\nabla_i$ belongs to $\mathcal{P}(\Gamma)$. 
\end{lemma}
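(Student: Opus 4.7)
The plan is to reduce the lemma to a direct application of Proposition~\ref{prop:Gamma^*} together with the hyperplane subdivision \eqref{eq:subdivision} applied to a well-chosen representative of the equivalence class of $\nabla$.

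First I would use the hypotheses: since $\Gamma(Q)$ is prime, $\chi(Q)=d\ge 1$, and $\mathcal{C}(\Gamma(Q))$ is a contracted descendant of $\Gamma\in\mathcal{L}_d$, Proposition~\ref{prop:Gamma^*} provides a weight $\sigma\in\mathbb{Z}^{\Gamma^*_0}$ with
\[
\nabla(Q,\theta)\cong \nabla(\Gamma^*,\sigma).
\]
In particular $\dim(\nabla(\Gamma^*,\sigma))=\dim(\nabla(Q,\theta))=d=\chi(\Gamma)=\chi(\Gamma^*)$, so the pair $(\Gamma^*,\sigma)$ is of the type allowed in Definition~\ref{def:P(Gamma)}.

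Next I would apply the hyperplane subdivision construction \eqref{eq:subdivision} to the pair $(\Gamma^*,\sigma)$ (rather than to $(Q,\theta)$): this yields a presentation
\[
\nabla(\Gamma^*,\sigma)=\bigcup_{i=1}^{q}\nabla(\Gamma^*,\sigma,\underline{k}^{(i)},\underline{k}^{(i)}+\underline{1})
\]
as a finite union of $d$-dimensional cells. By construction this is a regular subdivision obtained by slicing along finitely many hyperplanes of the form $\{x\in\mathcal{A}(\Gamma^*,\sigma)\mid x(a)=k\}$, and each cell $\nabla(\Gamma^*,\sigma,\underline{k}^{(i)},\underline{k}^{(i)}+\underline{1})$ lies in $\mathcal{P}(\Gamma)$ directly by Definition~\ref{def:P(Gamma)}.

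Finally I would transport this subdivision back to $\nabla=\nabla(Q,\theta)$ using the affine linear isomorphism $\varphi\colon \mathrm{AffSpan}(\nabla(\Gamma^*,\sigma))\to \mathrm{AffSpan}(\nabla(Q,\theta))$ realizing the equivalence from Proposition~\ref{prop:Gamma^*}. Since $\varphi$ maps lattice points to lattice points and carries hyperplanes to hyperplanes, setting $\nabla_i:=\varphi\bigl(\nabla(\Gamma^*,\sigma,\underline{k}^{(i)},\underline{k}^{(i)}+\underline{1})\bigr)$ gives a regular hyperplane subdivision $\nabla=\nabla_1\cup\cdots\cup\nabla_q$, and each $\nabla_i$ is equivalent to a member of $\mathcal{P}(\Gamma)$, hence belongs to $\mathcal{P}(\Gamma)$ (since cells are considered up to equivalence).

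I do not anticipate a significant obstacle: the nontrivial content has been packaged into Proposition~\ref{prop:Gamma^*}, and the regularity of the subdivision is already established in Section~\ref{sec:regular subdivision} for any (quiver, weight) pair. The only point worth stating carefully is that equivalence of flow polytopes is realized by an affine linear isomorphism and thus preserves both the hyperplane structure of the slicing and the regularity of the resulting subdivision.
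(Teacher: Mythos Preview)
Your proposal is correct and follows essentially the same approach as the paper: invoke Proposition~\ref{prop:Gamma^*} to obtain $\sigma$ with $\nabla(\Gamma^*,\sigma)\cong\nabla$, then take the subdivision~\eqref{eq:subdivision} of $\nabla(\Gamma^*,\sigma)$. The paper's proof is even terser, but your added remarks about transporting the subdivision back along the affine linear equivalence are exactly the details being left implicit there.
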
 

\begin{proof}
By Proposition~\ref{prop:Gamma^*} there exists a weight $\sigma\in \mathbb{Z}^{\Gamma^*_0}$ with $\nabla(\Gamma^*,\sigma)\cong\nabla$, and the subdivison  \eqref{eq:subdivision} of 
$\nabla(\Gamma^*,\sigma)$ gives the claim. 
\end{proof}

\begin{corollary}\label{cor:3-regular} 
For any prime flow polytope $\nabla$ of dimension $d\ge 2$ there exists a graph $\Gamma\in \mathcal{L}_d^{\mathrm{3-reg}}$ such that 
$\nabla$ has 
a regular hyperplane subdivision 
$\nabla=\nabla_1\cup\dots\cup\nabla_q$  
where each $\nabla_i$ belongs to $\mathcal{P}(\Gamma)$. 
\end{corollary}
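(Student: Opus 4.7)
The plan is to reduce Corollary~\ref{cor:3-regular} directly to Proposition~\ref{prop:3-regular}(ii) combined with the definition of the hyperplane subdivision~\eqref{eq:subdivision}. Since Proposition~\ref{prop:3-regular}(ii) already guarantees that any prime flow polytope of dimension $d\ge 2$ is equivalent to $\nabla(\Gamma^*,\sigma)$ for some $3$-regular graph $\Gamma\in\mathcal{L}_d^{\mathrm{3-reg}}$ and some weight $\sigma\in\mathbb{Z}^{\Gamma^*_0}$, essentially all the work has already been done; what remains is just to match definitions.

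Concretely, I would argue as follows. Given a prime flow polytope $\nabla$ with $d=\dim(\nabla)\ge 2$, apply Proposition~\ref{prop:3-regular}(ii) to obtain $\Gamma\in \mathcal{L}_d^{\mathrm{3-reg}}$ and $\sigma\in \mathbb{Z}^{\Gamma^*_0}$ such that $\nabla\cong \nabla(\Gamma^*,\sigma)$. Since $\chi(\Gamma^*)=\chi(\Gamma)=d$, this quiver-weight pair represents $\nabla$ and satisfies $\dim(\nabla(\Gamma^*,\sigma))=d$. Now apply the regular hyperplane subdivision~\eqref{eq:subdivision} to $(\Gamma^*,\sigma)$: this yields
\[\nabla(\Gamma^*,\sigma)=\bigcup_{i=1}^q \nabla(\Gamma^*,\sigma,\underline{k}^{(i)},\underline{k}^{(i)}+\underline{1}),\]
a decomposition into finitely many $d$-dimensional flow polytopes obtained by slicing along coordinate hyperplanes of the form $\{x(a)=k\}$, hence regular. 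By Definition~\ref{def:P(Gamma)}, each of these cells belongs to $\mathcal{P}(\Gamma)$. Transporting this subdivision along the equivalence $\nabla\cong\nabla(\Gamma^*,\sigma)$ gives the desired regular hyperplane subdivision of $\nabla$ with cells in $\mathcal{P}(\Gamma)$.

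Since every step follows from results already established (Proposition~\ref{prop:3-regular}(ii) for the existence of the $3$-regular representative, and the construction of Section~\ref{sec:regular subdivision} for the subdivision itself), there is no real obstacle; the corollary is essentially a packaging of Lemma~\ref{lemma:regular subdivision} specialized to $\Gamma\in\mathcal{L}_d^{\mathrm{3-reg}}$. Alternatively, one could deduce it more indirectly by combining Proposition~\ref{prop:3-regular}(i), Proposition~\ref{prop:prime}, and the observation from Section~\ref{sec:graphs} that any graph in $\mathcal{L}_d$ is a contracted descendant of some $3$-regular graph in $\mathcal{L}_d^{\mathrm{3-reg}}$, then invoking Lemma~\ref{lemma:regular subdivision}; but the route through Proposition~\ref{prop:3-regular}(ii) is the most economical.
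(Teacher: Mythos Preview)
Your proof is correct and follows essentially the same approach as the paper: invoke Proposition~\ref{prop:3-regular}(ii) to write $\nabla\cong\nabla(\Gamma^*,\sigma)$ with $\Gamma\in\mathcal{L}_d^{\mathrm{3-reg}}$, then take the subdivision~\eqref{eq:subdivision}. The paper's proof is just the two-sentence version of what you wrote.
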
 

\begin{proof}
By Proposition~\ref{prop:3-regular} (ii) there exists a $3$-regular graph $\Gamma$ and a weight for $\Gamma^*$ such that $\nabla\cong \nabla(\Gamma^*,\sigma)$. Now take the subdivision \eqref{eq:subdivision} of 
$\nabla(\Gamma^*,\sigma)$. 
\end{proof}

\begin{definition}\label{def:W(Gamma)}
For $\Gamma\in\mathcal{L}_d$ 
we define a set $\mathcal{W}(\Gamma)$ of weights for $\Gamma^*$ as follows. 
Recall that 
$\Gamma^*_0=\Gamma_0\bigsqcup \{v_e\mid e\in \Gamma_1\}$. 
Now 
\begin{align*}\mathcal{W}(\Gamma)=\{\theta\in \mathbb{Z}^{\Gamma^*_0}\mid &
\theta(v_e)=1\ \forall e\in \Gamma_1, \\  
& 0>\theta(v)>-\mathrm{valency}_{\Gamma}(v) \ \forall v\in \Gamma_0, \\ & \sum_{v\in \Gamma_0}\theta(v)=-|\Gamma_1|\}.
\end{align*}
In particular, if $\Gamma$ is $3$-regular, 
then $\theta(v)=-1$ for $d-1$ of the source vertices $v\in \Gamma_0\subset \Gamma^*_0$,  
and $\theta(v)=-2$ for the remaining  $d-1$ of the source vertices $v\in \Gamma_0\subset \Gamma^*_0$. 
\end{definition} 

\begin{proposition}\label{prop:P(Gamma)} 
For $\Gamma\in\mathcal{L}_d$ we have 
\[\mathcal{P}(\Gamma)=\{\nabla(\Gamma^*,\theta)\mid \theta\in \mathcal{W}(\Gamma),\ \dim(\nabla(\Gamma^*,\theta))=d\}.\] 
\end{proposition}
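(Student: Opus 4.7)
The plan is to prove both inclusions, with the main work concentrated on $\subseteq$. The key observation underlying everything is: if $\theta(v_e) = 1$ at every valency-$2$ sink $v_e \in \Gamma^*_0$ (which receives arrows $a_1, a_2$ from the two endpoints of $e$), then the flow-conservation equation $x(a_1) + x(a_2) = 1$ combined with non-negativity gives $x(a_1), x(a_2) \le 1$ for free. Hence $\nabla(\Gamma^*, \theta) = \nabla(\Gamma^*, \theta, \underline{0}, \underline{1})$ in this situation. The inclusion $\supseteq$ then follows immediately: for $\theta \in \mathcal{W}(\Gamma)$ with $\dim \nabla(\Gamma^*, \theta) = d$, apply the subdivision \eqref{eq:subdivision} to the pair $(\Gamma^*, \theta)$ itself; the index $\underline{k} = \underline{0}$ produces a cell equal to the whole of $\nabla(\Gamma^*, \theta)$, of dimension $d$, so $\nabla(\Gamma^*, \theta) \in \mathcal{P}(\Gamma)$.

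For $\subseteq$, let $\nabla = \nabla(\Gamma^*, \theta', \underline{k}, \underline{k}+\underline{1})$ be a $d$-dimensional cell of some $(\Gamma^*, \theta')$. The substitution $y(a) = x(a) - \underline{k}(a)$ is an affine lattice isomorphism $\mathcal{A}(\Gamma^*, \theta') \to \mathcal{A}(\Gamma^*, \tilde\theta)$ carrying $\nabla$ onto $\nabla(\Gamma^*, \tilde\theta, \underline{0}, \underline{1})$, where $\tilde\theta(w) = \theta'(w) - \sum_{a^+=w} \underline{k}(a) + \sum_{a^-=w} \underline{k}(a)$. Since $\Gamma$ (and hence $\Gamma^*$) is connected, $\dim \mathcal{A}(\Gamma^*, \tilde\theta) = \chi(\Gamma^*) = d$; the image is therefore full-dimensional in its ambient affine space, i.e.\ no inequality $0 \le y(a) \le 1$ is tight throughout it. Examining vertex by vertex: at each $v_e$, the value $\tilde\theta(v_e)$ must lie in $\{0,1,2\}$ (otherwise the polytope is empty), and the extremes $0$ and $2$ would force both incoming arrows to $0$ or both to $1$ throughout, contradicting fullness; hence $\tilde\theta(v_e) = 1$. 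At each $v \in \Gamma_0$ all adjacent $\Gamma^*$-arrows are outgoing, so $\tilde\theta(v) = -\sum_{a^-=v} y(a) \in [-\mathrm{valency}_\Gamma(v), 0]$, and the endpoints of this range again correspond to tight inequalities, leaving $0 > \tilde\theta(v) > -\mathrm{valency}_\Gamma(v)$. Summing the $\tilde\theta$-values over all of $\Gamma^*_0$ gives zero (the non-emptiness condition for the connected quiver $\Gamma^*$); combined with $\tilde\theta(v_e) = 1$ this yields $\sum_{v \in \Gamma_0} \tilde\theta(v) = -|\Gamma_1|$. Thus $\tilde\theta \in \mathcal{W}(\Gamma)$, and by the key observation $\nabla \cong \nabla(\Gamma^*, \tilde\theta, \underline{0}, \underline{1}) = \nabla(\Gamma^*, \tilde\theta)$, completing $\subseteq$.

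The main technical step is the per-vertex dimension analysis above, but no serious obstacle is anticipated: once the translation has reduced the problem to $\underline{k} = \underline{0}$, each constraint on $\tilde\theta$ follows from a local inspection of the unit-cube slice in $\mathcal{A}(\Gamma^*, \tilde\theta)$ at the relevant vertex of $\Gamma^*$, and the global sum condition is automatic.
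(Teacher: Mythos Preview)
Your argument follows essentially the same route as the paper: translate the cell by $-\underline{k}$ to reduce to $\nabla(\Gamma^*,\tilde\theta,\underline 0,\underline 1)$, then examine $\tilde\theta$ vertex by vertex to conclude $\tilde\theta\in\mathcal W(\Gamma)$, and handle $\supseteq$ via the observation $\nabla(\Gamma^*,\theta)=\nabla(\Gamma^*,\theta,\underline 0,\underline 1)$ for $\theta\in\mathcal W(\Gamma)$.

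There is one step you gloss over, and it is precisely the place where the hypothesis $\Gamma\in\mathcal L_d$ is actually used. You write that the cell is full-dimensional in the $d$-dimensional space $\mathcal A(\Gamma^*,\tilde\theta)$, ``i.e.\ no inequality $0\le y(a)\le 1$ is tight throughout it''. That implication is only valid if for each arrow $a$ the coordinate function $y\mapsto y(a)$ is non-constant on $\mathcal A(\Gamma^*,\tilde\theta)$; otherwise the hyperplane $\{y(a)=c\}$ could contain all of $\mathcal A$ and there would be no dimension drop. The coordinate $y(a)$ is constant on $\mathcal A$ exactly when $a$ is a bridge of the underlying graph of $\Gamma^*$, equivalently when the corresponding edge $e$ is a bridge of $\Gamma$. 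So you must use that $\Gamma\in\mathcal L_d$ is prime, hence has no bridge. The paper makes the same point in a slightly different form: when (say) $\tilde\theta(v_e)=0$ forces the two arrows into $v_e$ to be removable, it deletes them, observes that the resulting quiver is $(\Gamma')^*$ with $\Gamma'=\Gamma\setminus e$ still connected (by primeness of $\Gamma$), whence $\chi(\Gamma')<\chi(\Gamma)=d$, contradicting $d$-dimensionality of the cell. Once you insert this justification your proof is complete and coincides with the paper's.
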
 

\begin{proof}  
Take any cell $\nabla(\Gamma^*,\theta,{\underline{k}},{\underline{k}}+{\underline{1}})$ of $(\Gamma^*,\theta)$ where  $\dim(\nabla(\Gamma^*,\theta))=\chi(\Gamma)$. 
It is obviously equivalent to 
$\nabla(\Gamma^*,\sigma,{\underline{0}},{\underline{1}})$, where 
$\sigma(v)=\theta(v)-\sum_{a^+=v}{\underline{k}}(a)+\sum_{a^-=v}{\underline{k}}(a)$ for $v\in \Gamma^*_0$. 
For $x\in\nabla(\Gamma^*,\sigma,{\underline{0}},{\underline{1}})$ and $e\in\Gamma_1$, denoting by $a_1,a_2\in \Gamma^*_1$ the arrows pointing to $v_e\in\Gamma^*_0$, we have $x(a_1)+x(a_2)=\sigma(v_e)$. As $0\le x(a_1),x(a_2)\le 1$, we have 
$\sigma(v_e)\in\{0,1,2\}$. Now $\sigma(v_e)=0$ would imply $x(a_1)=0=x(a_2)$ for all $x\in \nabla(\Gamma^*,\sigma,{\underline{0}},{\underline{1}})$. 
This would mean that $a_1,a_2$ are removable for the pair $(\Gamma^*,\sigma)$, and so 
$\nabla(\Gamma^*,\sigma,{\underline{0}},{\underline{1}})\cong \nabla((\Gamma')^*,\sigma',{\underline{0}},{\underline{1}})$, where $\Gamma'$ is the graph obtained by removing  the edge $e$ of $\Gamma$ (i.e. $(\Gamma')^*$ is the quiver obtained by removing $a_1,a_2$ from $\Gamma^*$). 
Since $\Gamma\in \mathcal{L}_d$, removing an edge from $\Gamma$ we get a connected graph, 
hence $\chi(\Gamma')<\chi(\Gamma)$. 
This leads to the contradiction 
\[d=\dim(\nabla(\Gamma^*,\sigma,{\underline{0}},{\underline{1}}))=\dim(\nabla((\Gamma')^*,\sigma',{\underline{0}},{\underline{1}})\le\chi(\Gamma')<\chi(\Gamma)=d.\] 
Similarly, $\sigma(v_e)=2$ implies $x(a_1)=1=x(a_2)$ for all $x\in \nabla(\Gamma^*,\sigma,{\underline{0}},{\underline{1}})$, hence 
$\nabla(\Gamma^*,\sigma,{\underline{0}},{\underline{1}})\cong\nabla(\Gamma^*,\sigma',{\underline{0}},{\underline{1}})$, where $\sigma'(v_e)=0$, 
$\sigma'(a_1^-)=\sigma(a_1^-)+1$, $\sigma'(a_2^-)=\sigma(a_2^-)+1$, and for all 
$v\in \Gamma^*_0\setminus\{v_e,a_1^-,a_2^-\}$ we have $\sigma'(v)=\sigma(v)$. 
Consequently, $x(a_1)=0=x(a_2)$ for all $x\in \nabla(\Gamma^*,\sigma',{\underline{0}},{\underline{1}})$, 
leading to the contradiction $d=\dim(\nabla(\Gamma^*,\sigma',{\underline{0}},{\underline{1}}))<\chi(\Gamma)=d$ as above. 
Thus we showed that $\sigma(v_e)=1$ for all $e\in \Gamma_1$. 
This implies in particular that 
\[\nabla(\Gamma^*,\sigma,{\underline{0}},{\underline{1}})=\nabla(\Gamma^*,\sigma).\]

Take next any vertex $v\in \Gamma_0\subset\Gamma^*_0$, and set $\ell:=\mathrm{valency}_{\Gamma}(v)$. 
There are $\ell$ arrows $a_1,\dots,a_{\ell}$ 
starting at $v$, and there is no arrow ending at $v$. So $\sigma(v)=-(x(a_1)+\cdots+x(a_{\ell}))$ for any 
$x\in \nabla(\Gamma^*,\sigma)$, implying by $0\le x(a_1),\dots,x(a_{\ell})\le 1$ that $\theta(v)\in \{0,-1,\dots,-\ell\}$.
As above, $\theta(v)=0$ would imply $x(a_1)=\cdots=x(a_{\ell})=0$ for all $x\in \nabla(\Gamma^*,\sigma)$. 
In particular, $a_1\in \Gamma^*_1$ is removable for $\sigma$; 
denote by $Q$ the quiver obtained by removing $a_1$ from $\Gamma^*$. Note that removing an edge from $\Gamma\in \mathcal{L}_d$ the graph remains connected, hence 
$\chi(Q)<\chi(\Gamma^*)=\chi(\Gamma)$. 
This leads to the contradiction $d=\dim(\nabla(\Gamma^*,\sigma))=\dim(\nabla(Q,\sigma))\le \chi(Q)<\chi(\Gamma)=d$. Similarly, 
$\sigma(v)=-\ell$ would imply $x(a_1)=\cdots=x(a_{\ell})=1$ for all $x\in \nabla(\Gamma^*,\sigma)$, leading to the contradiction 
$d=\dim(\nabla(\Gamma^*,\sigma))<d$.  It follows that $\sigma(v)\in \{-1,-2,\dots,-\ell+1\}$. 
Moreover,  $\nabla(\Gamma^*,\sigma)\neq\emptyset$ implies that 
\[0=\sum_{v\in \Gamma^*_0}\sigma(v)=\sum_{v\in \Gamma_0\subset\Gamma^*_0}\sigma(v)+\sum_{e\in \Gamma_1} \sigma(v_e)=\sum_{v\in \Gamma_0\subset\Gamma^*_0}\sigma(v)+|\Gamma_1|.\] 
Thus $\sigma\in \mathcal{W}(\Gamma)$. 

Conversely, $\theta\in \mathcal{W}(\Gamma)$ implies that  
$\nabla(\Gamma^*,\theta)=\nabla(\Gamma^*,\theta,{\underline{0}},{\underline{1}})$, so 
$\nabla(\Gamma^*,\theta)$ is a cell of $(\Gamma^*,\theta)$. 
\end{proof}

%%%%%%%%%%%%%%%%%%%%%%

\section{Classification of $3$-dimensional compressed flow polytopes} \label{sec:class-compressed-3-dim} 

A lattice polytope is \emph{compressed} if it has width $1$ with respect to any of its facets. 
This means that  for each facet, the polytope is contained in the region between the hyperplane spanned by the given facet and one of the two neighbouring parallel lattice hyperplanes. Equivalent characterizations and information on the significance of this notion can be found in  \cite{haase-etal}, \cite[Theorem 2.4]{sullivant}, \cite[Theorem 2.3]{ohsugi-hibi}, 
\cite[Chapter 9]{lorea-rambau-santos}.  The relevance of this notion for the present study is that 
for any quiver $Q$ and weight $\theta\in \mathbb{Z}^{Q_0}$, the  cells of $(Q,\theta)$ are compressed. 
In this section we describe all compressed flow polytopes up to dimension $3$.

It is obvious that a product $\nabla\times \nabla'$ of lattice polytopes is compressed if and only if both $\nabla$ and $\nabla'$ are compressed. 
Therefore to classify compressed flow polytopes, it is sufficient to deal with the prime compressed flow polytopes. 

Clearly, up to equivalence, the closed interval $[0,1]\subset \mathbb{R}$ is the only $1$-dimensional compressed polytope, and it can be realized as the flow polytope of the 
(quiver,weight) pair 
 \begin{tikzpicture}[>=latex']
\foreach \x in {(0,0),(1,0)} \filldraw \x circle (2pt);   
\draw  [->, thick]  (0,0) to [out=45,in=135] (1,0);
\draw  [->, thick]  (0,0) to [out=-45,in=-135] (1,0);
\node [left] at (0,0) {$-1$};
\node [right] at (1,0) {$1$};
\end{tikzpicture}. 
%$\nabla(\bullet\rightrightarrows\bullet, (-1,1))$. 
It is also easy to see that up to equivalence, the only prime $2$-dimensional compressed polytope is the $2$-simplex in 
$\mathbb{R}^2$ with vertices $(0,0)$, $(1,0)$, $(0,1)$, and it can be realized as the 
flow polytope of the (quiver,weight) pair 
 \begin{tikzpicture}[>=latex']
\foreach \x in {(0,0),(1,0)} \filldraw \x circle (2pt);   
\draw  [->, thick]  (0,0) to [out=45,in=135] (1,0);
\draw  [->, thick]  (0,0) to  (1,0);
\draw  [->, thick]  (0,0) to [out=-45,in=-135] (1,0);
\node [left] at (0,0) {$-1$};
\node [right] at (1,0) {$1$};
\end{tikzpicture}. 
 
 \begin{proposition}\label{prop:all compressed} 
Let $d$ be a positive integer.  For any compressed $d$-dimensional prime flow polytope $\nabla$ there exists a 
graph $\Gamma\in \mathcal{L}_d$ such that $\nabla\in\mathcal{P}(\Gamma)$. 
\end{proposition}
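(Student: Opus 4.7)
My plan is to start from the presentation furnished by Proposition~\ref{prop:3-regular}(i) and then use the compressed hypothesis to upgrade the quiver to the fully-sinked form $\Gamma^*$. Write $\nabla\cong\nabla(Q,\theta)$ with $(Q,\theta)$ tight, where $\mathcal{C}(\Gamma(Q))=\Gamma\in\mathcal{L}_d$ and $Q$ is obtained from $\Gamma$ by placing sinks on some subset of the edges and orienting the remaining ones. By Proposition~\ref{prop:tightness}(i) the facets of $\nabla(Q,\theta)$ are exactly the slices $\{x(a)=0\}$ for $a\in Q_1$. Primeness of $\Gamma(Q)$ puts every arrow $a\in Q_1$ into an undirected cycle, so the linear functional $x\mapsto x(a)$ surjects from the ambient lattice $\mathcal{A}(Q,\theta)\cap\mathbb{Z}^{Q_1}$ onto $\mathbb{Z}$; consequently the width-one condition at the facet $\{x(a)=0\}$ translates into the bound $0\le x(a)\le 1$ for every $x\in\nabla(Q,\theta)$ and every $a\in Q_1$.

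The core of the argument is a sink-insertion step. Pick an edge $e\in\Gamma_1$ that is still a single arrow $a:u\to w$ of $Q$, and form $Q'$ from $Q$ by removing $a$ and adding a new vertex $v_e$ together with arrows $a_1:u\to v_e$ and $a_2:w\to v_e$; define $\theta'$ to agree with $\theta$ off $\{w,v_e\}$, with $\theta'(v_e)=1$ and $\theta'(w)=\theta(w)-1$. The affine map $x\mapsto x'$ given by $x'(a_1)=x(a)$, $x'(a_2)=1-x(a)$, $x'(b)=x(b)$ on all other arrows, carries $\mathcal{A}(Q,\theta)$ bijectively onto $\mathcal{A}(Q',\theta')$---the weight shift at $w$ is exactly what flow conservation demands---and the compressed bound $0\le x(a)\le 1$ guarantees that it restricts to a lattice-point preserving equivalence $\nabla(Q,\theta)\cong\nabla(Q',\theta')$. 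Iterating the step over all non-sink edges of $\Gamma$ yields an equivalence $\nabla\cong\nabla(\Gamma^*,\sigma)$ for some $\sigma\in\mathbb{Z}^{\Gamma^*_0}$.

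It remains to verify that $\nabla(\Gamma^*,\sigma)$ is a $d$-dimensional cell of $(\Gamma^*,\sigma)$ in the sense of Definition~\ref{def:P(Gamma)}. On every arrow $c\in\Gamma^*_1$ the flow lies in $[0,1]$: for arrows inherited from $Q$ this is the compressed bound established above, and for a newly inserted pair $(a_1,a_2)$ it follows from $x(a_1)+x(a_2)=\sigma(v_e)=1$ together with non-negativity of both. Hence $\nabla(\Gamma^*,\sigma)=\nabla(\Gamma^*,\sigma,\underline{0},\underline{0}+\underline{1})$ is a full-dimensional cell of $(\Gamma^*,\sigma)$, and so $\nabla\in\mathcal{P}(\Gamma)$. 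The case $d=1$ is dealt with directly via the Kronecker-quiver presentation recorded at the start of the section.

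The main obstacle I anticipate is the sink-insertion step. One has to verify in detail that flow conservation forces the weight correction $\theta'(w)=\theta(w)-1$, and one must use the compressed inequality $x(a)\le 1$ in an essential way to keep $x'(a_2)=1-x(a)$ non-negative. Without the compressed hypothesis the proposed map would fail to be a bijection between the two flow polytopes, yielding at best an inclusion rather than an equivalence.
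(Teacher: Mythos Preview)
Your proof is correct and follows essentially the same approach as the paper: start from the tight presentation of Proposition~\ref{prop:3-regular}(i), use tightness together with the compressed hypothesis to bound all flows in $[0,1]$, then insert a sink on each remaining oriented edge of $\Gamma$ via the affine map $x'(a_1)=x(a)$, $x'(a_2)=1-x(a)$ (with the weight correction $\theta'(w)=\theta(w)-1$), and conclude that the resulting $\nabla(\Gamma^*,\sigma)$ coincides with its $(\underline{0},\underline{1})$-cell. Your justification for why the neighbouring lattice hyperplane to the facet $\{x(a)=0\}$ is $\{x(a)=1\}$---via primeness of $\Gamma(Q)$ forcing every arrow into an undirected cycle, so that $x\mapsto x(a)$ surjects onto $\mathbb{Z}$ from the affine lattice---is slightly more explicit than the paper's, which reaches the same conclusion by exhibiting a lattice vertex $y$ with $y(a)\neq 0$ and invoking width one.
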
 

\begin{proof} Let $\nabla$ be a $d$-dimensional compressed prime flow polytope. 
By Proposition~\ref{prop:3-regular} (i) there is a tight pair $(Q,\sigma)$ with $\nabla(Q,\sigma)\cong \nabla$, 
where $Q$ is obtained by putting a sink on some of the edges of 
$\Gamma:=\mathcal{C}(\Gamma(Q))\in \mathcal{L}_d$ and orienting somehow the remaining edges. 
So  $\chi(\Gamma)=\chi(Q)=d$. 
Tightness of $(Q,\sigma)$ implies that the facets of $\nabla(Q,\sigma)$ are 
$\{x\in \nabla(Q,\sigma)\mid x(a)=0\}$, where $a\in Q_1$. Since $\nabla$ is compressed, we conclude that for all $a\in Q_1$ we have that 
$\{x(a)\mid x\in \nabla(Q,\sigma)\}$ is the closed interval $[0,1]$.  
Indeed, by tightness of $(Q,\sigma)$ we can take any $x$ belonging to the facet $F$ corresponding to $a$, and then $x(a)=0$. Again by tightness, $a\in Q_1$ is not removable for $\sigma$, hence there exists a vertex $y$ of $\nabla(Q,\sigma)$ with $y(a)\neq 0$. Since $\nabla$ is compressed, it lies 
between the facet $F$ and the lattice hyperplane 
$\{x\in \mathcal{A}(Q,\sigma)\mid x(a)=1\}$. 
Thus $y(a)=1$. 
In particular, all coordinates of any lattice point of $\nabla(Q,\sigma)$ belong to $\{0,1\}$, and 
for any $a\in Q_1$ there exist $x,y\in \nabla(Q,\sigma)$ such that $x(a)=0$ and $y(a)=1$. 
Consider a valency $2$ sink $v$ in $Q_0$, and denote by $a,b\in Q_1$ the arrows with 
$a^+=v=b^+$. For any $x\in \nabla(Q,\sigma)$ we have $x(a)+x(b)=\sigma(v)$. 
Applying this for a lattice point $x$ with $x(a)=0$, we get that $\sigma(v)=x(b)\in \{0,1\}$. 
On the other hand, the existence of a lattice point $x$ with $x(a)=1$ implies that 
$\sigma(v)\ge1$. Thus $\sigma(v)=1$. 

Now let $a\in Q_1$ be an arrow with $\mathrm{valency}_Q(a^-)>2$ and $\mathrm{valency}_Q(a^+)>2$. Denote by $(Q',\sigma')$ the pair obtained  by putting a sink on $a$ 
(i.e. we replace $a$ by two arrows $a_1$ and $a_2$, where $a_1^-=a^-$, 
$a_2^-=a^+$, and $a_1^+=a_2^+$ is a new vertex in $Q'$), $\sigma'(v)=1$ for the new sink $v$, 
$\sigma'(a^+)=\sigma(a^+)-1$
and $\sigma'(w)=\sigma(w)$ for all other verices. The map $\mathcal{A}(Q,\sigma)\to 
\mathcal{A}(Q',\sigma')$ 
sending $x$ to $y$, where $y(a_1)=x(a)$, $y(a_2)=1-x(a)$, and $y(b)=x(b)$ for all 
$b\in Q'_1\setminus \{a_1,a_2\}=Q_1\setminus \{a\}$ is easily seen to give an equivalence between the lattice polytopes $\nabla(Q,\sigma)$ and $\nabla(Q',\sigma')$. 

A successive application of the above step leads us to the pair $(\Gamma^*,\theta)$ with $\nabla(\Gamma^*,\theta)\cong\nabla$, 
where $\Gamma=\mathcal{C}(\Gamma(Q))$, and 
$\theta(v)=1$ for each valency $2$ sink $v$. 
Moreover, although the pair $(\Gamma^*,\theta)$ may not be tight, it still has the property 
that for any lattice point $x$ and any $a\in Q_1$, we have $x(a)\in \{0,1\}$, 
and for any $a\in Q_1$ there exist $x,y\in \nabla(Q,\theta)$ such that $x(a)=0$ and $y(a)=1$. 
It follows that $\nabla(\Gamma^*,\theta)=\nabla(\Gamma^*,\theta,\underline{0},\underline{1})$, so 
$\nabla$ is a cell of $(\Gamma^*,\theta)$. That is, $\nabla\in \mathcal{P}(\Gamma)$. 
\end{proof}

 \begin{proposition} \label{prop:all 3-dim compressed} 
Up to equivalence there are three prime compressed $3$-dimensional flow polytopes: 
$\nabla_{\mathrm{I.a}}^{(3)}$, $\nabla_{\mathrm{II.c}}^{(3)}$, $\nabla_{\mathrm{I.b}}^{(3)}$. 
For each such $\nabla$  a tight pair $(Q,\theta)$ with $\nabla\cong \nabla(Q,\theta)$ is given below, together with  a system of linear inequalities defining $\nabla$ as a subset of $\mathbb{R}^3$ 
 (namely $L\ge 0$, where $L$ ranges over the degree $1$ polynomials in $x,y,z$ attached to the arrows), and with 
 a list of the lattice points $v_0,v_1,\dots,v_m$ of $\nabla\subset \mathbb{R}^3$. 
 
\begin{tikzpicture}[>=latex]
\foreach \x in {(0,0),(2,0)} \filldraw \x circle (2pt); 
\node [left] at (-1,0) {$\nabla_{\mathrm{I.a}}^{(3)}$};  
\draw  [->, thick]  (0,0) to [out=30,in=150] (2,0); 
\node [above] at (1,0.5) {\text{\tiny$x$}}; 
\node [above] at (1,0.2) {\text{\tiny$y$}};
\node [above] at (1,-0.3) {\text{\tiny$z$}}; 
\node [below] at (1,-0.4) {\text{\tiny$1-x-y-z$}};
\draw  [->, thick]  (0,0) to [out=75,in=105] (2,0);
\draw  [->, thick]  (0,0) to [out=-30,in=-150] (2,0);
\draw  [->, thick]  (0,0) to [out=-75,in=-105] (2,0);
\node [left] at (0,0) {$-1$};
\node [right] at (2,0) {$1$};
%\node [right] at (-1,-1) { $x,y,z\ge 0$; \quad $x+y+z\le 1$};
\node [right] at (5,0) {$\begin{array}{c|c|c|c}
v_0 & v_1 & v_2 & v_3 \\
\hline 
0&1&0&0\\ 0&0&1&0\\ 0&0&0&1
\end{array}$}; 
\end{tikzpicture}
 
\begin{tikzpicture}[>=latex]
\foreach \x in {(0,0),(1,1.73),(2,0)} \filldraw \x circle (2pt); 
\node [left] at (-1.8,1) {$\nabla_{\mathrm{II.c}}^{(3)}$};  
\draw [->, thick] (0,0) to [out=-30,in =-150] (2,0); 
\draw [->, thick] (0,0) to [out=80,in =-160] (1,1.73);
\draw [->, thick] (0,0) to [out=40,in=-100] (1,1.73); 
\draw [->, thick] (1,1.73) to [out=-80,in=150] (2,0); 
\draw [->, thick] (1,1.73) to [out=-30,in=100] (2,0); 
\node [above] at (1,1.73) {$0$}; 
\node [left] at (0,0) {$-1$}; 
\node [right] at (2,0) {$1$};  
\node at (1,-0.1) {\text{\tiny$y$}};
\node at (0.7,0.6) {\text{\tiny$z$}};
\node at (1.2,0.6) {\text{\tiny$x$}};
\node [left] at (0.4,1.2) {\text{\tiny$1-y-z$}};
\node [right] at (1.6,1.2) {\text{\tiny$1-x-y$}};
\node [right] at (5,1) {$\begin{array}{c|c|c|c|c}
v_0 & v_1 & v_2 & v_3 & v_4 \\
\hline 
0&0&1&1&0\\ 0&0&0&0&1\\ 0&1&0&1&0
\end{array}$}; 
\end{tikzpicture}

\begin{tikzpicture}[>=latex]
\foreach \x in {(0,0),(3,0),(1.5,1.5),(1.5,0.5),(1.5,-0.5),(1.5,-1.5)} \filldraw \x circle (2pt); 
\node [left] at (-1,0) {$\nabla_{\mathrm{I.b}}^{(3)}$};  
\draw  [->, thick]  (0,0) to  (1.5,1.5); 
\draw  [->, thick]  (0,0) to  (1.5,0.5);
\draw  [->, thick]  (0,0) to (1.5,-0.5);
\draw  [->, thick]  (0,0) to (1.5,-1.5);
\draw  [->, thick]  (3,0) to  (1.5,1.5);
\draw  [->, thick]  (3,0) to  (1.5,0.5);
\draw  [->, thick]  (3,0) to (1.5,-0.5);
\draw  [->, thick]  (3,0) to (1.5,-1.5);
\node [left] at (0,0) {$-2$};
\node [right] at (3,0) {$-2$};
\node at (0.8,1) {\text{\tiny$x$}}; 
\node at (2.4,1) {\text{\tiny$1-x$}}; 
\node at (0.8,0.4) {\text{\tiny$y$}};
\node at (2.4,0.4) {\text{\tiny$1-y$}};
\node at (0.8,-0.1) {\text{\tiny$1-z$}};
\node at (2.4,-0.1) {\text{\tiny$z$}};
\node at (0.2,-1.1) {\text{\tiny$1-x-y+z$}};
\node at (2.6,-1.1) {\text{\tiny$x+y-z$}};
\node [above] at (1.5,1.5) {$1$};
\node [above] at (1.5,0.5) {$1$}; 
\node [above] at (1.5,-0.5) {$1$}; 
\node [above] at (1.5,-1.5) {$1$}; 
\node [right] at (5,0) {$\begin{array}{c|c|c|c|c|c}
v_0 & v_1 & v_2 & v_3 & v_4 &v_5 \\
\hline 
0&1&1&0&0&1\\ 0&0&0&1&1&1\\ 0&0&1&0&1&1
\end{array}$}; 
\end{tikzpicture}
 \end{proposition}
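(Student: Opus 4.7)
The approach is to combine Proposition~\ref{prop:all compressed} and Proposition~\ref{prop:P(Gamma)}, reducing the problem to enumerating pairs $(\Gamma,\theta)$ with $\Gamma\in\mathcal{L}_3$ and $\theta\in\mathcal{W}(\Gamma)$ and analyzing each pair. I would first enumerate $\mathcal{L}_3$: from $\chi(\Gamma)=3$ one has $|\Gamma_1|=|\Gamma_0|+2$, and the valency-$\ge 3$ condition gives $2|\Gamma_1|\ge 3|\Gamma_0|$, forcing $|\Gamma_0|\le 4$. A case-by-case enumeration (subject to connectedness and primeness) produces exactly four graphs: the quadruple edge $\Gamma_A$ on $2$ vertices; the graph $\Gamma_B$ on $3$ vertices with edge multiplicities $(1,2,2)$; the complete graph $K_4$; and the graph $\Gamma_D$ consisting of two double edges joined by a perfect matching.

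Next, for each $\Gamma$ I would list $\mathcal{W}(\Gamma)$ up to graph automorphism (using $\theta(v_e)=1$, $-\mathrm{valency}_\Gamma(v)<\theta(v)<0$ and $\sum_{v\in\Gamma_0}\theta(v)=-|\Gamma_1|$) and describe $\nabla(\Gamma^*,\theta)$ explicitly by solving the flow equations under $0\le x(a)\le 1$. For $\Gamma_A$ the two weights $(-1,-3)$ and $(-2,-2)$ produce the $3$-simplex $\nabla_{\mathrm{I.a}}^{(3)}$ and the hypersimplex $\Delta(2,4)\cong\nabla_{\mathrm{I.b}}^{(3)}$. For $\Gamma_B$ the three admissible weights $(-1,-1,-3)$, $(-1,-2,-2)$, $(-2,-2,-1)$ yield a tetrahedron, the square pyramid $\nabla_{\mathrm{II.c}}^{(3)}$, and another tetrahedron. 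For $K_4$ the unique (up to symmetry) weight gives again a tetrahedron. For $\Gamma_D$ the three weights (up to automorphism) produce either a polytope of dimension less than $3$ (when the two $(-2)$-vertices are joined by a double edge) or a tetrahedron or the square pyramid. Finally, for each of the three inequivalent polytopes I would exhibit the tight representation displayed in the statement by successively applying Lemma~\ref{lemma:reduction} and Proposition~\ref{prop:tightness}(iii),(iv) to contract or remove arrows in $\Gamma^*$; the listed vertex coordinates and facet inequalities are then verified directly.

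The three polytopes are pairwise non-equivalent since they contain $4$, $5$, and $6$ lattice points respectively. The main obstacle is the exhaustive analysis of the $3$-regular graphs $K_4$ and $\Gamma_D$: here $\Gamma^*$ has $12$ arrows, and for each admissible weight configuration one must explicitly parametrize the flow polytope and identify it with one already obtained from $\Gamma_A$ or $\Gamma_B$ (or show its dimension drops below $3$). For $\Gamma_D$ in particular this requires separating several inequivalent weight configurations according to the position of the two $(-2)$-vertices within the graph.
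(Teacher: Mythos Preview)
Your proposal is correct and follows essentially the same approach as the paper: enumerate the four graphs in $\mathcal{L}_3$, list the weights in $\mathcal{W}(\Gamma)$ up to graph automorphism, and for each pair tighten $(\Gamma^*,\theta)$ using Lemma~\ref{lemma:reduction} and Proposition~\ref{prop:tightness} to identify the resulting polytope. Your graphs $\Gamma_A,\Gamma_B,K_4,\Gamma_D$ are precisely the paper's graphs I, II, IV, III, and your case-by-case conclusions (including the dimension drop for $\Gamma_D$ when the two $(-2)$-vertices are joined by a double edge) agree with the paper's analysis.
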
 

\begin{proof} We follow the strategy offered by Proposition~\ref{prop:P(Gamma)} and Proposition~\ref{prop:all compressed}:  
for all pairs $(\Gamma^*,\theta)$, where $\Gamma\in \mathcal{L}_3$ and $\theta\in \mathcal{W}(\Gamma)$ we investigate the polytope $\nabla(\Gamma^*,\theta)$.    
The graphs $\Gamma$ in $\mathcal{L}_3$ and the weights 
in $\mathcal{W}(\Gamma)$ 
are shown  below (the source vertices of $\Gamma^*$ correspond bijectively to the 
vertices of $\Gamma$, and the negative integer written next to a vertex of $\Gamma$ indicates the value of the weight for $\Gamma^*$ at the corresponding vertex; recall that the value of a weight in $\mathcal{W}(\Gamma)$ on any sink vertex is $1$).

\begin{tikzpicture} 
\foreach \x in {(0,0),(2,0)} \filldraw \x circle (2pt); 
\node [left] at (-1,0) {I.a};  
\draw  [-]  (0,0) to [out=15,in=165] (2,0);
\draw  [-]  (0,0) to [out=45,in=135] (2,0);
\draw  [-]  (0,0) to [out=-15,in=-165] (2,0);
\draw  [-]  (0,0) to [out=-45,in=-135] (2,0);
\node [left] at (0,0) {$-1$};
\node [right] at (2,0) {$-3$};
\end{tikzpicture}
\quad \begin{tikzpicture} 
\foreach \x in {(0,0),(2,0)} \filldraw \x circle (2pt); 
\node [left] at (-1,0) {I.b};  
\draw  [-]  (0,0) to [out=15,in=165] (2,0);
\draw  [-]  (0,0) to [out=45,in=135] (2,0);
\draw  [-]  (0,0) to [out=-15,in=-165] (2,0);
\draw  [-]  (0,0) to [out=-45,in=-135] (2,0);
\node [left] at (0,0) {$-2$};
\node [right] at (2,0) {$-2$};
\end{tikzpicture}

\begin{tikzpicture}
\foreach \x in {(0,0),(1,1.73),(2,0)} \filldraw \x circle (2pt); 
\node [left] at (-0.5,1) {II.a};  
\draw (0,0)--(2,0); 
\draw [-] (0,0) to [out=80,in =-160] (1,1.73);
\draw [-] (0,0) to [out=40,in=-100] (1,1.73); 
\draw [-] (1,1.73) to [out=-80,in=150] (2,0); 
\draw [-] (1,1.73) to [out=-30,in=100] (2,0); 
\node [above] at (1,1.73) {$-3$}; 
\node [left] at (0,0) {$-1$}; 
\node [right] at (2,0) {$-1$};  
\end{tikzpicture}
\begin{tikzpicture}
\foreach \x in {(0,0),(1,1.73),(2,0)} \filldraw \x circle (2pt); 
\node [left] at (-0.5,1) {II.b};  
\draw (0,0)--(2,0); 
\draw [-] (0,0) to [out=80,in =-160] (1,1.73);
\draw [-] (0,0) to [out=40,in=-100] (1,1.73); 
\draw [-] (1,1.73) to [out=-80,in=150] (2,0); 
\draw [-] (1,1.73) to [out=-30,in=100] (2,0); 
\node [above] at (1,1.73) {$-1$}; 
\node [left] at (0,0) {$-2$}; 
\node [right] at (2,0) {$-2$};  
\end{tikzpicture}
\begin{tikzpicture}
\foreach \x in {(0,0),(1,1.73),(2,0)} \filldraw \x circle (2pt); 
\node [left] at (-0.5,1) {II.c};  
\draw (0,0)--(2,0); 
\draw [-] (0,0) to [out=80,in =-160] (1,1.73);
\draw [-] (0,0) to [out=40,in=-100] (1,1.73); 
\draw [-] (1,1.73) to [out=-80,in=150] (2,0); 
\draw [-] (1,1.73) to [out=-30,in=100] (2,0); 
\node [above] at (1,1.73) {$-2$}; 
\node [left] at (0,0) {$-1$}; 
\node [right] at (2,0) {$-2$};  
\end{tikzpicture}

\begin{tikzpicture}
\foreach \x in {(0,0),(0,1.5),(2,0),(2,1.5)} \filldraw \x circle (2pt); 
\draw (0,0)--(0,1.5); \draw (2,0)--(2,1.5);  
\node [left] at (-0.5,1) {III.a};  
\node [left] at (0,0) {$-1$};
\node [left] at (0,1.5) {$-2$};
\node [right] at (2,1.5) {$-2$};
\node [right] at (2,0) {$-1$};
\draw  [-]  (0,0) to [out=30,in=150] (2,0);
\draw  [-]  (0,0) to [out=-30,in=-150] (2,0);
\draw  [-]  (0,1.5) to [out=30,in=150] (2,1.5);
\draw  [-]  (0,1.5) to [out=-30,in=-150] (2,1.5);
\end{tikzpicture}
\begin{tikzpicture}
\foreach \x in {(0,0),(0,1.5),(2,0),(2,1.5)} \filldraw \x circle (2pt); 
\draw (0,0)--(0,1.5); \draw (2,0)--(2,1.5);  
\node [left] at (-0.5,1) {III.b};  
\node [left] at (0,0) {$-2$};
\node [left] at (0,1.5) {$-2$};
\node [right] at (2,1.5) {$-1$};
\node [right] at (2,0) {$-1$};
\draw  [-]  (0,0) to [out=30,in=150] (2,0);
\draw  [-]  (0,0) to [out=-30,in=-150] (2,0);
\draw  [-]  (0,1.5) to [out=30,in=150] (2,1.5);
\draw  [-]  (0,1.5) to [out=-30,in=-150] (2,1.5);
\end{tikzpicture}
\begin{tikzpicture}
\foreach \x in {(0,0),(0,1.5),(2,0),(2,1.5)} \filldraw \x circle (2pt); 
\draw (0,0)--(0,1.5); \draw (2,0)--(2,1.5);  
\node [left] at (-0.5,1) {III.c};  
\node [left] at (0,0) {$-1$};
\node [left] at (0,1.5) {$-2$};
\node [right] at (2,1.5) {$-1$};
\node [right] at (2,0) {$-2$};
\draw  [-]  (0,0) to [out=30,in=150] (2,0);
\draw  [-]  (0,0) to [out=-30,in=-150] (2,0);
\draw  [-]  (0,1.5) to [out=30,in=150] (2,1.5);
\draw  [-]  (0,1.5) to [out=-30,in=-150] (2,1.5);
\end{tikzpicture}

\begin{tikzpicture}  
\foreach \x in {(0,0),(0,1.5),(2,0),(2,1.5)} \filldraw \x circle (2pt); 
\draw (0,0)--(0,1.5)--(2,1.5)--(2,0)--(0,0); \draw (0,0)--(2,1.5); \draw (2,0)--(0,1.5); 
\node [left] at (-0.5,1) {IV.};
\node [left] at (0,0) {$-2$};
\node [left] at (0,1.5) {$-1$};
\node [right] at (2,1.5) {$-1$};
\node [right] at (2,0) {$-2$};
\end{tikzpicture}

Consider first the (quiver,weight) pair corresponding to I.a. 
Applying four times Lemma~\ref{lemma:reduction} we get $(Q,\theta)$ below: 

%\begin{center}
 \begin{tikzpicture}[>=latex]
\foreach \x in {(0,0),(3,0),(1.5,1.5),(1.5,0.5),(1.5,-0.5),(1.5,-1.5)} \filldraw \x circle (2pt); 
%\node [left] at (-1,0) {$\nabla_{\mathrm{I.b}}$};  
\draw  [->, thick]  (0,0) to  (1.5,1.5); 
\draw  [->, thick]  (0,0) to  (1.5,0.5);
\draw  [->, thick]  (0,0) to (1.5,-0.5);
\draw  [->, thick]  (0,0) to (1.5,-1.5);
\draw  [->, thick]  (3,0) to  (1.5,1.5);
\draw  [->, thick]  (3,0) to  (1.5,0.5);
\draw  [->, thick]  (3,0) to (1.5,-0.5);
\draw  [->, thick]  (3,0) to (1.5,-1.5);
\node [left] at (0,0) {$-1$};
\node [right] at (3,0) {$-3$};
\node at (0.8,1) {\text{\tiny$a_1$}}; 
\node at (2.4,1) {\text{\tiny$b_1$}}; 
\node at (0.8,0.4) {\text{\tiny$a_2$}};
\node at (2.4,0.4) {\text{\tiny$b_2$}};
\node at (0.8,-0.1) {\text{\tiny$a_3$}};
\node at (2.4,-0.1) {\text{\tiny$b_3$}};
\node at (0.2,-1.1) {\text{\tiny$a_4$}};
\node at (2.6,-1.1) {\text{\tiny$b_4$}};
\node [above] at (1.5,1.5) {$1$};
\node [above] at (1.5,0.5) {$1$}; 
\node [above] at (1.5,-0.5) {$1$}; 
\node [above] at (1.5,-1.5) {$1$}; 
\node at (6,0) {$\mapsto$\quad}; 
%\end{tikzpicture}
%\begin{tikzpicture}[>=latex]
\foreach \x in {(8,0),(10,0)} \filldraw \x circle (2pt); 
\draw  [->, thick]  (8,0) to [out=30,in=150] (10,0); 
\node [above] at (9,0.5) {\text{\tiny$c_1$}}; 
\node [above] at (9,0.2) {\text{\tiny$c_2$}};
\node [above] at (9,-0.3) {\text{\tiny$c_3$}}; 
\node [below] at (9,-0.5) {\text{\tiny$c_4$}};
\draw  [->, thick]  (8,0) to [out=75,in=105] (10,0);
\draw  [->, thick]  (8,0) to [out=-30,in=-150] (10,0);
\draw  [->, thick]  (8,0) to [out=-75,in=-105] (10,0);
\node [left] at (8,0) {$-1$};
\node [right] at (10,0) {$1$};
\end{tikzpicture}

The polytope $\nabla(Q,\theta)$ is the 3-simplex with vertices 
$(0,0,0)$, $(1,0,0)$, $(0,1,0)$, $(0,0,1)$, that we denoted by $\nabla_{\mathrm{I.a}}^{(3)}$ in the statement.  
After tightening the (quiver,weight) pairs belonging to each of  II.a, II.b, III.b and IV  
we get the same pair $(Q,\theta)$ as above for I.a. 

After tightening the (quiver,weight) pair belonging to III.a we get the disjoint union of two copies of   \begin{tikzpicture}[>=latex']
\foreach \x in {(0,0),(1,0)} \filldraw \x circle (2pt);   
\draw  [->, thick]  (0,0) to [out=45,in=135] (1,0);
\draw  [->, thick]  (0,0) to [out=-45,in=-135] (1,0);
\node [left] at (0,0) {$-1$};
\node [right] at (1,0) {$1$};
\end{tikzpicture}, hence it gives a $2$-dimensional polytope (the unit square). 

Let us turn to the (quiver,weight) pair belonging to II.c. Applying three times Lemma~\ref{lemma:reduction} we get to the pair $(Q',\theta')$: 

\begin{tikzpicture}[>=latex]%[scale=1.3,>=latex]
\foreach \x in {(0,0),(1,1.73),(2,0),(1.3,0.8),(1.9,1),(0.7,0.8),(0.1,1),(1,0)} \filldraw \x circle (1pt); 
\node  at (1.45,0.9) {\text{\tiny{$1$}}}; 
\node at (2.05,1) {\text{\tiny{$1$}}}; 
\draw [->, thick] (0,0) to (1,0); 
\draw [->, thick] (2,0) to (1,0); \node at (1,0.15) {\text{\tiny{$1$}}}; 
\draw [->, thick] (0,0) to (0.7,0.8); \node at (0.56,0.9) {\text{\tiny{$1$}}}; 
\draw [->, thick] (1,1.73) to  (0.1,1); 
\draw [->, thick] (1,1.73) to (0.7,0.8);
\draw [->, thick] (0,0) to  (0.1,1); \node at (-0.05,1) {\text{\tiny{$1$}}}; 
\draw [->, thick] (1,1.73) to (1.3,0.8); 
\draw [->, thick] (2,0) to (1.3,0.8); 
\draw [->, thick] (1,1.73) to  (1.9,1); 
\draw [->, thick] (2,0) to  (1.9,1); 
\node [above] at (1,1.73) {\text{\tiny{$-2$}}}; 
\node [left] at (0,0) {\text{\tiny{$-1$}}}; 
\node [right] at (2,0) {\text{\tiny{$-2$}}};  
\node at (2.8,1) {$\mapsto$};
\end{tikzpicture} 
\begin{tikzpicture}[>=latex]%[scale=1.3,>=latex]
\foreach \x in {(0,0),(1,1.73),(2,0),(1.3,0.8),(1.9,1),(1,0),(0.1,1)} \filldraw \x circle (1pt); 
\node  at (1.45,0.9) {\text{\tiny{$1$}}}; 
\node at (2.05,1) {\text{\tiny{$1$}}}; 
\draw [->, thick] (0,0) to (1,0); 
\draw [->, thick] (2,0) to (1,0); \node at (1,0.15) {\text{\tiny{$1$}}}; 
%\draw [->, thick] (0,0) to [out=80,in =-160] (1,1.73);
\draw [->, thick] (0,0) to [out=40,in=-100] (1,1.73); 
%\draw [->, thick] (0,0) to (0.7,0.8); \node at (0.56,0.9) {\text{\tiny{$1$}}}; 
\draw [->, thick] (1,1.73) to  (0.1,1); 
%\draw [->, thick] (1,1.73) to (0.7,0.8);
\draw [->, thick] (0,0) to  (0.1,1); \node at (-0.05,1) {\text{\tiny{$1$}}}; 
\draw [->, thick] (1,1.73) to (1.3,0.8); 
\draw [->, thick] (2,0) to (1.3,0.8); 
\draw [->, thick] (1,1.73) to  (1.9,1); 
\draw [->, thick] (2,0) to  (1.9,1); 
\node [above] at (1,1.73) {\text{\tiny{$-2+1$}}}; 
\node [left] at (0,0) {\text{\tiny{$-1$}}}; 
\node [right] at (2,0) {\text{\tiny{$-2$}}};  
\node at (2.8,1) {$\mapsto$};
\end{tikzpicture} 
\begin{tikzpicture}[>=latex]%[scale=1.3,>=latex]
\foreach \x in {(0,0),(1,1.73),(2,0),(1.3,0.8),(1.9,1),(1,0)} \filldraw \x circle (1pt); 
\node  at (1.45,0.9) {\text{\tiny{$1$}}}; 
\node at (2.05,1) {\text{\tiny{$1$}}}; 
\draw [->, thick] (0,0) to (1,0); 
\draw [->, thick] (2,0) to (1,0); \node at (1,0.15) {\text{\tiny{$1$}}}; 
\draw [->, thick] (0,0) to [out=80,in =-160] (1,1.73);
\draw [->, thick] (0,0) to [out=40,in=-100] (1,1.73); 
%\draw [->, thick] (0,0) to (0.7,0.8); \node at (0.56,0.9) {\text{\tiny{$1$}}}; 
%\draw [->, thick] (1,1.73) to  (0.1,1); 
%\draw [->, thick] (1,1.73) to (0.7,0.8);
%\draw [->, thick] (0,0) to  (0.1,1); \node at (-0.05,1) {\text{\tiny{$1$}}}; 
\draw [->, thick] (1,1.73) to (1.3,0.8); 
\draw [->, thick] (2,0) to (1.3,0.8); 
\draw [->, thick] (1,1.73) to  (1.9,1); 
\draw [->, thick] (2,0) to  (1.9,1); 
\node [above] at (1,1.73) {\text{\tiny{$-2+1+1$}}}; 
\node [left] at (0,0) {\text{\tiny{$-1$}}}; 
\node [right] at (2,0) {\text{\tiny{$-2$}}};  
\node at (2.8,1) {$\mapsto$};
\end{tikzpicture} 

\begin{tikzpicture}[>=latex]%[scale=1.3,>=latex]
\foreach \x in {(0,0),(1,1.73),(2,0),(1.3,0.8),(1.9,1)} \filldraw \x circle (1pt); 
\node [left] at (-1.5,1) {$(Q',\theta')$:};  
\node  at (1.45,0.9) {\text{\tiny{$1$}}}; 
\node at (2.05,1) {\text{\tiny{$1$}}}; 
\draw [->, thick] (0,0) to (2,0); 
%\draw [->, thick] (0,0) to (1,0); 
%\draw [->, thick] (2,0) to (1,0); \node at (1,0.15) {\text{\tiny{$1$}}}; 
\draw [->, thick] (0,0) to [out=80,in =-160] (1,1.73);
\draw [->, thick] (0,0) to [out=40,in=-100] (1,1.73); 
%\draw [->, thick] (0,0) to (0.7,0.8); \node at (0.56,0.9) {\text{\tiny{$1$}}}; 
%\draw [->, thick] (1,1.73) to  (0.1,1); 
%\draw [->, thick] (1,1.73) to (0.7,0.8);
%\draw [->, thick] (0,0) to  (0.1,1); \node at (-0.05,1) {\text{\tiny{$1$}}}; 
\draw [->, thick] (1,1.73) to (1.3,0.8); 
\draw [->, thick] (2,0) to (1.3,0.8); 
\draw [->, thick] (1,1.73) to  (1.9,1); 
\draw [->, thick] (2,0) to  (1.9,1); 
\node [above] at (1,1.73) {\text{\tiny{$0$}}}; 
\node [left] at (0,0) {\text{\tiny{$-1$}}}; 
\node [right] at (2,0) {\text{\tiny{$-1$}}};  
\end{tikzpicture} 

Next we choose a  spanning tree in $Q'$: 

\begin{tikzpicture}[>=latex]
\foreach \x in {(0,0),(1,1.73),(2,0),(1.3,0.8),(1.9,1)} \filldraw \x circle (1pt); 
\draw [->, thick] (0,0) to [out=80,in =-160] (1,1.73);
\draw [->, thick] (2,0) to (1.3,0.8); 
\draw [->, thick] (1,1.73) to  (1.9,1); 
\draw [->, thick] (2,0) to  (1.9,1); 
\node at (2.6,0) {\qquad\qquad\qquad\qquad};
\end{tikzpicture}
\begin{tikzpicture}[>=latex]
\foreach \x in {(0,0),(1,1.73),(2,0),(1.3,0.8),(1.9,1)} \filldraw \x circle (1pt); 
\draw [->, thick] (0,0) to (2,0); 
\draw [->] (0,0) to [out=80,in =-160] (1,1.73);
\draw [->, thick] (0,0) to [out=40,in=-100] (1,1.73); 
\draw [->, thick] (1,1.73) to (1.3,0.8); 
\draw [->] (2,0) to (1.3,0.8); 
\draw [->] (1,1.73) to  (1.9,1); 
\draw [->] (2,0) to  (1.9,1); 
\node at (1,0.1) {\text{\tiny$y$}};
\node at (0.7,0.6) {\text{\tiny$z$}};
\node at (1.1,1.25) {\text{\tiny$x$}};
\end{tikzpicture}

The coordinates $x,y,z$ corresponding 
to the arrows in the complement of the spanning tree are free parameters in the 
affine subspace $\mathcal{A}(Q',\theta')\subset \mathbb{R}^{Q'_1}$; going through the arrows in an appropriate order we express all other coordinates of a general point in $\mathcal{A}(Q',\theta')$ in terms of $x,y,z$:  

\begin{tikzpicture}[>=latex]
\foreach \x in {(0,0),(1,1.73),(2,0),(1.3,0.8),(1.9,1)} \filldraw \x circle (1pt); 
\draw [->, thick] (0,0) to (2,0); 
\draw [->, thick] (0,0) to [out=80,in =-160] (1,1.73);
\draw [->, thick] (0,0) to [out=40,in=-100] (1,1.73); 
\draw [->, thick] (1,1.73) to (1.3,0.8); 
\draw [->] (2,0) to (1.3,0.8); 
\draw [->] (1,1.73) to  (1.9,1); 
\draw [->] (2,0) to  (1.9,1); 
%\node [above] at (1,1.73) {$0$}; 
\node [left] at (0,0) {$-1$}; 
%\node [right] at (2,0) {$-1$};  
\node at (1,0.1) {\text{\tiny$y$}};
\node at (0.7,0.6) {\text{\tiny$z$}};
%\node at (1.35,0.4) {\text{\tiny$1-x$}};
\node at (1.1,1.25) {\text{\tiny$x$}};
%\node at (2.25,0.45) {\text{\tiny$x+y$}};
\node [left] at (0.4,1.2) {\text{\tiny$1-y-z$}};
%\node [right] at (1.4,1.4) {\text{\tiny$1-x-y$}};
\end{tikzpicture}
\begin{tikzpicture}[>=latex]
\foreach \x in {(0,0),(1,1.73),(2,0),(1.3,0.8),(1.9,1)} \filldraw \x circle (1pt); 
%\node [left] at (1.3,0.8) {$1$}; 
%\node [right] at (1.9,1) {$1$}; 
%\node [left] at (-1.8,1) {$\nabla_{\mathrm{II.c}}$};  
%\draw [->, thick] (0,0) to [out=-30,in =-150] (2,0); 
\draw [->, thick] (0,0) to (2,0); 
\draw [->, thick] (0,0) to [out=80,in =-160] (1,1.73);
\draw [->, thick] (0,0) to [out=40,in=-100] (1,1.73); 
\draw [->, thick] (1,1.73) to (1.3,0.8); 
\draw [->] (2,0) to (1.3,0.8); 
\draw [->, thick] (1,1.73) to  (1.9,1); 
\draw [->] (2,0) to  (1.9,1); 
\node [above] at (1,1.73) {$0$}; 
%\node [left] at (0,0) {$-1$}; 
%\node [right] at (2,0) {$-1$};  
\node at (1,0.1) {\text{\tiny$y$}};
\node at (0.7,0.6) {\text{\tiny$z$}};
%\node at (1.35,0.4) {\text{\tiny$1-x$}};
\node at (1.1,1.25) {\text{\tiny$x$}};
%\node at (2.25,0.45) {\text{\tiny$x+y$}};
\node [left] at (0.4,1.2) {\text{\tiny$1-y-z$}};
\node [right] at (1.4,1.4) {\text{\tiny$1-x-y$}};
\end{tikzpicture}
\begin{tikzpicture}[>=latex]
\foreach \x in {(0,0),(1,1.73),(2,0),(1.3,0.8),(1.9,1)} \filldraw \x circle (1pt); 
%\node [left] at (1.3,0.8) {$1$}; 
\node [right] at (1.9,1) {$1$}; 
%\node [left] at (-1.8,1) {$\nabla_{\mathrm{II.c}}$};  
%\draw [->, thick] (0,0) to [out=-30,in =-150] (2,0); 
\draw [->, thick] (0,0) to (2,0); 
\draw [->, thick] (0,0) to [out=80,in =-160] (1,1.73);
\draw [->, thick] (0,0) to [out=40,in=-100] (1,1.73); 
\draw [->, thick] (1,1.73) to (1.3,0.8); 
\draw [->] (2,0) to (1.3,0.8); 
\draw [->, thick] (1,1.73) to  (1.9,1); 
\draw [->, thick] (2,0) to  (1.9,1); 
%\node [above] at (1,1.73) {$0$}; 
%\node [left] at (0,0) {$-1$}; 
%\node [right] at (2,0) {$-1$};  
\node at (1,0.1) {\text{\tiny$y$}};
\node at (0.7,0.6) {\text{\tiny$z$}};
%\node at (1.35,0.4) {\text{\tiny$1-x$}};
\node at (1.1,1.25) {\text{\tiny$x$}};
\node at (2.25,0.45) {\text{\tiny$x+y$}};
\node [left] at (0.4,1.2) {\text{\tiny$1-y-z$}};
\node [right] at (1.4,1.4) {\text{\tiny$1-x-y$}};
\end{tikzpicture}

\begin{center}
\begin{tikzpicture}[>=latex]
\foreach \x in {(0,0),(1,1.73),(2,0),(1.3,0.8),(1.9,1)} \filldraw \x circle (1pt); 
%\node [left] at (1.3,0.8) {$1$}; 
%\node [right] at (1.9,1) {$1$}; 
%\node [left] at (-1.8,1) {$\nabla_{\mathrm{II.c}}$};  
%\draw [->, thick] (0,0) to [out=-30,in =-150] (2,0); 
\draw [->, thick] (0,0) to (2,0); 
\draw [->, thick] (0,0) to [out=80,in =-160] (1,1.73);
\draw [->, thick] (0,0) to [out=40,in=-100] (1,1.73); 
\draw [->, thick] (1,1.73) to (1.3,0.8); 
\draw [->, thick] (2,0) to (1.3,0.8); 
\draw [->, thick] (1,1.73) to  (1.9,1); 
\draw [->, thick] (2,0) to  (1.9,1); 
%\node [above] at (1,1.73) {$0$}; 
%\node [left] at (0,0) {$-1$}; 
%\node [right] at (2,0) {$-1$};  
\node at (1,0.1) {\text{\tiny$y$}};
\node at (0.7,0.6) {\text{\tiny$z$}};
\node at (1.35,0.4) {\text{\tiny$1-x$}};
\node at (1.1,1.25) {\text{\tiny$x$}};
\node at (2.3,0.45) {\text{\tiny$x+y$}};
\node [left] at (0.4,1.2) {\text{\tiny$1-y-z$}};
\node [right] at (1.4,1.4) {\text{\tiny$1-x-y$}};
\end{tikzpicture}
\end{center} 

This shows that $\nabla(Q',\theta')$ can be viewed as the polytope  in $\mathbb{R}^3$ 
defined by the inequalities  $x\ge0$, 
$y\ge 0$, $z\ge 0$, $1-x\ge 0$, $x+y\ge 0$, 
$1-x-y\ge 0$, $1-y-z\ge 0$. 
The inequalities $x\ge 0$, $y\ge 0$ imply $x+y\ge 0$, hence the arrow 
above labeled with $x+y$ is contractible (see Section~\ref{sec:tightness}). 
Also the inequalities $y\ge 0$, $1-x-y\ge 0$ imply 
$1-x\ge 0$, hence the arrow labeled by $1-x$ is contractible.  
Contracting these two arrows we end up with the (quiver,weight) pair 

\begin{center}
\begin{tikzpicture}[>=latex]
\foreach \x in {(0,0),(1,1.73),(2,0)} \filldraw \x circle (1pt); 
%\node [left] at (-1.8,1) {$\nabla_{\mathrm{II.c}}$};  
\draw [->, thick] (0,0) to [out=-30,in =-150] (2,0); 
\draw [->, thick] (0,0) to [out=80,in =-160] (1,1.73);
\draw [->, thick] (0,0) to [out=40,in=-100] (1,1.73); 
\draw [->, thick] (1,1.73) to [out=-80,in=150] (2,0); 
\draw [->, thick] (1,1.73) to [out=-30,in=100] (2,0); 
\node [above] at (1,1.73) {$0$}; 
\node [left] at (0,0) {$-1$}; 
\node [right] at (2,0) {$-1+1+1$};  
\node at (1,-0.1) {\text{\tiny$y$}};
\node at (0.7,0.6) {\text{\tiny$z$}};
\node at (1.2,0.6) {\text{\tiny$x$}};
\node [left] at (0.4,1.2) {\text{\tiny$1-y-z$}};
\node [right] at (1.6,1.2) {\text{\tiny$1-x-y$}};
\end{tikzpicture}
\end{center}

and the inequalities
given in the statement for $\nabla_{\mathrm{II.c}}^{(3)}$. 
It is then an easy matter to list the lattice points in our polytope. 

A similar analysis yields that the (quiver,weight) pair belonging to III.c results in the same polytope as II.c, whereas after tightening the (quiver,weight) pair belonging to I.b we get the 
polytope $\nabla_{\mathrm{I.b}}^{(3)}$ given in the statement. 
\end{proof} 

In the course of the proof of Proposition~\ref{prop:all 3-dim compressed} we showed the following: 

\begin{proposition}\label{prop:3-cells}
\begin{itemize} 
\item[(i)] $\mathcal{P}(\Gamma_{\mathrm{I}})=\{\nabla_{\mathrm{I.a}}^{(3)},\quad \nabla_{\mathrm{I.b}}^{(3)}\}$
\item[(ii)] $\mathcal{P}(\Gamma_{\mathrm{II}})=\{\nabla_{\mathrm{I.a}}^{(3)},\quad \nabla_{\mathrm{II.c}}^{(3)}\}$
\item[(iii)] $\mathcal{P}(\Gamma_{\mathrm{III}})=\{\nabla_{\mathrm{I.a}}^{(3)},\quad \nabla_{\mathrm{II.c}}^{(3)}\}$
\item[(iv)] $\mathcal{P}(\Gamma_{\mathrm{IV}})=\{\nabla_{\mathrm{I.a}}^{(3)}\}$
\end{itemize}
\end{proposition}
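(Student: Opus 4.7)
The plan is to invoke Proposition~\ref{prop:P(Gamma)}, which identifies $\mathcal{P}(\Gamma)$ (for $\Gamma\in\mathcal{L}_3$) with the set of equivalence classes $\nabla(\Gamma^*,\theta)$ as $\theta$ ranges over those weights in $\mathcal{W}(\Gamma)$ for which $\dim(\nabla(\Gamma^*,\theta))=3$. The enumeration of $\mathcal{L}_3$ carried out at the beginning of the proof of Proposition~\ref{prop:all 3-dim compressed} exhibits exactly four graphs $\Gamma_{\mathrm{I}},\Gamma_{\mathrm{II}},\Gamma_{\mathrm{III}},\Gamma_{\mathrm{IV}}$, and for each of them the entire set $\mathcal{W}(\Gamma)$ is already displayed there (the weight diagrams labeled I.a, I.b; II.a, II.b, II.c; III.a, III.b, III.c; and IV). Hence all that is left is to attach, to each of these nine (quiver, weight) pairs, the equivalence class of the polytope it produces.

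The required identifications were performed case by case in the proof of Proposition~\ref{prop:all 3-dim compressed}: using Lemma~\ref{lemma:reduction}, Proposition~\ref{prop:valency 2}, and the removability/contractibility arguments of Section~\ref{sec:tightness}, each pair tightens to one of the three prime compressed $3$-dimensional flow polytopes, or else collapses to a lower dimensional polytope. Specifically, the outcomes established there are: I.a, II.a, II.b, III.b, IV all yield $\nabla_{\mathrm{I.a}}^{(3)}$; I.b yields $\nabla_{\mathrm{I.b}}^{(3)}$; II.c and III.c yield $\nabla_{\mathrm{II.c}}^{(3)}$; while the pair III.a tightens to a disjoint union of two Kronecker-quiver pairs, so $\nabla(\Gamma_{\mathrm{III}}^*,\theta)$ is a product of two intervals, of dimension $2<3$, and thus does not contribute to $\mathcal{P}(\Gamma_{\mathrm{III}})$ by Proposition~\ref{prop:P(Gamma)}.

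Sorting the nine outcomes by the underlying graph $\Gamma$ then immediately produces the four equalities claimed in the proposition. The only step requiring genuine care is the bookkeeping around the III.a pair: it is the unique weight in the union of the four sets $\mathcal{W}(\Gamma)$ whose polytope fails the dimension condition in Proposition~\ref{prop:P(Gamma)} and must therefore be discarded from $\mathcal{P}(\Gamma_{\mathrm{III}})$; everything else is a direct readout of the case analysis already in place.
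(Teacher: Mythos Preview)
Your proposal is correct and follows essentially the same approach as the paper: the paper itself states that the proposition was established ``in the course of the proof of Proposition~\ref{prop:all 3-dim compressed}'', and you have accurately extracted and organized the relevant case-by-case identifications from that proof (including the discarding of the pair III.a because its polytope is only $2$-dimensional).
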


\begin{remark}\label{remark:all compressed are flow} A compressed $d$-dimensional flow polytope is equivalent to  a $0-1$-polytope in $\mathbb{R}^d$ 
(this is a consequence for example of  the method of proof of Proposition~\ref{prop:all 3-dim compressed}). 
It is easy to check that all $3$-dimensional compressed $0-1$-polytopes in $\mathbb{R}^3$ are in fact flow polytopes. 
This does not hold for higher dimensions, see Example~\ref{example:4 dim compressed non-flow}.  
\end{remark}

%%%%%%%%%%%%%%%%%%%%%%%%%%%%%

\section{$4$-dimensional cells of quivers with $3$-regular chassis}\label{sec:4-cells}  

In this section we decribe the polytopes in $\mathcal{P}(\Gamma)$ for $\Gamma\in \mathcal{L}_4^{\mathrm{3-reg}}$, 
in order to apply later effectively Corollary~\ref{cor:3-regular} for $4$-dimensional flow polytopes. 
There are five graphs $\Gamma_{\mathrm{I}},\Gamma_{\mathrm{II}},\Gamma_{\mathrm{III}},
\Gamma_{\mathrm{IV}},\Gamma_{\mathrm{V}}$  in $\mathcal{L}_4^{\mathrm{3-reg}}$. These graphs $\Gamma$ together with the  corresponding weights in 
$\mathcal{W}(\Gamma)$ are given below (for $\theta\in \mathcal{W}(\Gamma)$ we indicate the value of $\theta$ at the vertices in $\Gamma_0\subset \Gamma^*_0$; the value of $\theta$ on the remaining vertices of $\Gamma^*$ is $1$):  

\begin{tikzpicture}[scale=0.45] 
\foreach \x in {(0,0),(2,0),(0,2),(2,2),(0,4),(2,4)} \filldraw \x circle (2pt); 
\node  at (1,3) {I.a};  
\draw  [-]  (0,4) to [out=15,in=165] (2,4);
\draw  [-]  (0,4) to [out=-15,in=-165] (2,4);
\draw [-] (0,4) to (0,2); 
\draw [-] (2,4) to (2,2);
\draw [-]  (0,2) to [out=-105,in=105] (0,0); 
\draw [-]  (0,2) to [out=-75,in=75] (0,0); 
\draw [-]  (2,2) to [out=-105,in=105] (2,0); 
\draw [-]  (2,2) to [out=-75,in=75] (2,0); 
\draw [-] (0,0) to (2,0);
\node [left] at (0,4) {\text{\tiny$-2$}}; \node [right] at (2,4) {\text{\tiny$-1$}}; \node [right] at (2,2) {\text{\tiny$-2$}}; \node [right] at (2,0) {\text{\tiny$-1$}}; 
\node [left] at (0,0) {\text{\tiny$-2$}}; \node [left] at (0,2) {\text{\tiny$-1$}}; 
\end{tikzpicture}   
\begin{tikzpicture}[scale=0.45] 
\foreach \x in {(0,0),(2,0),(0,2),(2,2),(0,4),(2,4)} \filldraw \x circle (2pt); 
\node at (1,3) {I.b};  
\draw  [-]  (0,4) to [out=15,in=165] (2,4);
\draw  [-]  (0,4) to [out=-15,in=-165] (2,4);
\draw [-] (0,4) to (0,2); 
\draw [-] (2,4) to (2,2);
\draw [-]  (0,2) to [out=-105,in=105] (0,0); 
\draw [-]  (0,2) to [out=-75,in=75] (0,0); 
\draw [-]  (2,2) to [out=-105,in=105] (2,0); 
\draw [-]  (2,2) to [out=-75,in=75] (2,0); 
\draw [-] (0,0) to (2,0);
\node [left] at (0,4) {\text{\tiny$-2$}}; \node [right] at (2,4) {\text{\tiny$-2$}}; \node [right] at (2,2) {\text{\tiny$-2$}}; \node [right] at (2,0) {\text{\tiny$-1$}}; 
\node [left] at (0,0) {\text{\tiny$-1$}}; \node [left] at (0,2) {\text{\tiny$-1$}}; 
\end{tikzpicture}   
\begin{tikzpicture}[scale=0.45] 
\foreach \x in {(0,0),(2,0),(0,2),(2,2),(0,4),(2,4)} \filldraw \x circle (2pt); 
\node at (1,3) {I.c};  
\draw  [-]  (0,4) to [out=15,in=165] (2,4);
\draw  [-]  (0,4) to [out=-15,in=-165] (2,4);
\draw [-] (0,4) to (0,2); 
\draw [-] (2,4) to (2,2);
\draw [-]  (0,2) to [out=-105,in=105] (0,0); 
\draw [-]  (0,2) to [out=-75,in=75] (0,0); 
\draw [-]  (2,2) to [out=-105,in=105] (2,0); 
\draw [-]  (2,2) to [out=-75,in=75] (2,0); 
\draw [-] (0,0) to (2,0);
\node [left] at (0,4) {\text{\tiny$-2$}}; \node [right] at (2,4) {\text{\tiny$-2$}}; \node [right] at (2,2) {\text{\tiny$-1$}}; \node [right] at (2,0) {\text{\tiny$-2$}}; 
\node [left] at (0,0) {\text{\tiny$-1$}}; \node [left] at (0,2) {\text{\tiny$-1$}}; 
\end{tikzpicture}   
\begin{tikzpicture}[scale=0.45] 
\foreach \x in {(0,0),(2,0),(0,2),(2,2),(0,4),(2,4)} \filldraw \x circle (2pt); 
\node at (1,3) {I.d};  
\draw  [-]  (0,4) to [out=15,in=165] (2,4);
\draw  [-]  (0,4) to [out=-15,in=-165] (2,4);
\draw [-] (0,4) to (0,2); 
\draw [-] (2,4) to (2,2);
\draw [-]  (0,2) to [out=-105,in=105] (0,0); 
\draw [-]  (0,2) to [out=-75,in=75] (0,0); 
\draw [-]  (2,2) to [out=-105,in=105] (2,0); 
\draw [-]  (2,2) to [out=-75,in=75] (2,0); 
\draw [-] (0,0) to (2,0);
\node [left] at (0,4) {\text{\tiny$-2$}}; \node [right] at (2,4) {\text{\tiny$-1$}}; \node [right] at (2,2) {\text{\tiny$-2$}}; \node [right] at (2,0) {\text{\tiny$-1$}}; 
\node [left] at (0,0) {\text{\tiny$-1$}}; \node [left] at (0,2) {\text{\tiny$-2$}}; 
\end{tikzpicture} 

\begin{tikzpicture}[scale=0.45] 
\foreach \x in {(0,0),(2,0),(0,2),(2,2),(0,4),(2,4)} \filldraw \x circle (2pt); 
\node at (1,3) {II.a};  
\draw  [-]  (0,4) to [out=15,in=165] (2,4);
\draw  [-]  (0,4) to [out=-15,in=-165] (2,4);
\draw [-] (0,4) to (0,2); 
\draw [-] (2,4) to (2,2);
\draw [-]  (0,2) to  (0,0); 
\draw [-]  (2,2) to  (2,0); 
\draw [-] (0,0) to (2,0); 
\draw [-] (0,2) to  (2,0);
\draw [-] (0,0) to  (2,2); 
\node [left] at (0,4) {\text{\tiny$-2$}}; \node [right] at (2,4) {\text{\tiny$-2$}}; \node [right] at (2,2) {\text{\tiny$-2$}}; \node [right] at (2,0) {\text{\tiny$-1$}}; 
\node [left] at (0,0) {\text{\tiny$-1$}}; \node [left] at (0,2) {\text{\tiny$-1$}}; 
\end{tikzpicture} 
\begin{tikzpicture}[scale=0.45] 
\foreach \x in {(0,0),(2,0),(0,2),(2,2),(0,4),(2,4)} \filldraw \x circle (2pt); 
\node at (1,3) {II.b};  
\draw  [-]  (0,4) to [out=15,in=165] (2,4);
\draw  [-]  (0,4) to [out=-15,in=-165] (2,4);
\draw [-] (0,4) to (0,2); 
\draw [-] (2,4) to (2,2);
\draw [-]  (0,2) to  (0,0); 
\draw [-]  (2,2) to  (2,0); 
\draw [-] (0,0) to (2,0); 
\draw [-] (0,2) to  (2,0);
\draw [-] (0,0) to  (2,2); 
\node [left] at (0,4) {\text{\tiny$-2$}}; \node [right] at (2,4) {\text{\tiny$-2$}}; \node [right] at (2,2) {\text{\tiny$-1$}}; \node [right] at (2,0) {\text{\tiny$-2$}}; 
\node [left] at (0,0) {\text{\tiny$-1$}}; \node [left] at (0,2) {\text{\tiny$-1$}}; 
\end{tikzpicture} 
\begin{tikzpicture}[scale=0.45] 
\foreach \x in {(0,0),(2,0),(0,2),(2,2),(0,4),(2,4)} \filldraw \x circle (2pt); 
\node at (1,3) {II.c};  
\draw  [-]  (0,4) to [out=15,in=165] (2,4);
\draw  [-]  (0,4) to [out=-15,in=-165] (2,4);
\draw [-] (0,4) to (0,2); 
\draw [-] (2,4) to (2,2);
\draw [-]  (0,2) to  (0,0); 
\draw [-]  (2,2) to  (2,0); 
\draw [-] (0,0) to (2,0); 
\draw [-] (0,2) to  (2,0);
\draw [-] (0,0) to  (2,2); 
\node [left] at (0,4) {\text{\tiny$-1$}}; \node [right] at (2,4) {\text{\tiny$-2$}}; \node [right] at (2,2) {\text{\tiny$-2$}}; \node [right] at (2,0) {\text{\tiny$-2$}}; 
\node [left] at (0,0) {\text{\tiny$-1$}}; \node [left] at (0,2) {\text{\tiny$-1$}}; 
\end{tikzpicture} 
\begin{tikzpicture}[scale=0.45] 
\foreach \x in {(0,0),(2,0),(0,2),(2,2),(0,4),(2,4)} \filldraw \x circle (2pt); 
\node at (1,3) {II.d};  
\draw  [-]  (0,4) to [out=15,in=165] (2,4);
\draw  [-]  (0,4) to [out=-15,in=-165] (2,4);
\draw [-] (0,4) to (0,2); 
\draw [-] (2,4) to (2,2);
\draw [-]  (0,2) to  (0,0); 
\draw [-]  (2,2) to  (2,0); 
\draw [-] (0,0) to (2,0); 
\draw [-] (0,2) to  (2,0);
\draw [-] (0,0) to  (2,2); 
\node [left] at (0,4) {\text{\tiny$-1$}}; \node [right] at (2,4) {\text{\tiny$-2$}}; \node [right] at (2,2) {\text{\tiny$-2$}}; \node [right] at (2,0) {\text{\tiny$-1$}}; 
\node [left] at (0,0) {\text{\tiny$-1$}}; \node [left] at (0,2) {\text{\tiny$-2$}}; 
\end{tikzpicture} 

\begin{tikzpicture}[scale=0.45] 
\foreach \x in {(0,0),(2,0),(0,2),(2,2),(0,4),(2,4)} \filldraw \x circle (2pt); 
\node at (1,3) {II.e};  
\draw  [-]  (0,4) to [out=15,in=165] (2,4);
\draw  [-]  (0,4) to [out=-15,in=-165] (2,4);
\draw [-] (0,4) to (0,2); 
\draw [-] (2,4) to (2,2);
\draw [-]  (0,2) to  (0,0); 
\draw [-]  (2,2) to  (2,0); 
\draw [-] (0,0) to (2,0); 
\draw [-] (0,2) to  (2,0);
\draw [-] (0,0) to  (2,2); 
\node [left] at (0,4) {\text{\tiny$-1$}}; \node [right] at (2,4) {\text{\tiny$-2$}}; \node [right] at (2,2) {\text{\tiny$-1$}}; \node [right] at (2,0) {\text{\tiny$-2$}}; 
\node [left] at (0,0) {\text{\tiny$-2$}}; \node [left] at (0,2) {\text{\tiny$-1$}}; 
\end{tikzpicture} 
\begin{tikzpicture}[scale=0.45] 
\foreach \x in {(0,0),(2,0),(0,2),(2,2),(0,4),(2,4)} \filldraw \x circle (2pt); 
\node at (1,3) {II.f};  
\draw  [-]  (0,4) to [out=15,in=165] (2,4);
\draw  [-]  (0,4) to [out=-15,in=-165] (2,4);
\draw [-] (0,4) to (0,2); 
\draw [-] (2,4) to (2,2);
\draw [-]  (0,2) to  (0,0); 
\draw [-]  (2,2) to  (2,0); 
\draw [-] (0,0) to (2,0); 
\draw [-] (0,2) to  (2,0);
\draw [-] (0,0) to  (2,2); 
\node [left] at (0,4) {\text{\tiny$-1$}}; \node [right] at (2,4) {\text{\tiny$-2$}}; \node [right] at (2,2) {\text{\tiny$-1$}}; \node [right] at (2,0) {\text{\tiny$-2$}}; 
\node [left] at (0,0) {\text{\tiny$-1$}}; \node [left] at (0,2) {\text{\tiny$-2$}}; 
\end{tikzpicture} 
\begin{tikzpicture}[scale=0.45] 
\foreach \x in {(0,0),(2,0),(0,2),(2,2),(0,4),(2,4)} \filldraw \x circle (2pt); 
\node at (1,3) {II.g};  
\draw  [-]  (0,4) to [out=15,in=165] (2,4);
\draw  [-]  (0,4) to [out=-15,in=-165] (2,4);
\draw [-] (0,4) to (0,2); 
\draw [-] (2,4) to (2,2);
\draw [-]  (0,2) to  (0,0); 
\draw [-]  (2,2) to  (2,0); 
\draw [-] (0,0) to (2,0); 
\draw [-] (0,2) to  (2,0);
\draw [-] (0,0) to  (2,2); 
\node [left] at (0,4) {\text{\tiny$-1$}}; \node [right] at (2,4) {\text{\tiny$-1$}}; \node [right] at (2,2) {\text{\tiny$-2$}}; \node [right] at (2,0) {\text{\tiny$-2$}}; 
\node [left] at (0,0) {\text{\tiny$-2$}}; \node [left] at (0,2) {\text{\tiny$-1$}}; 
\end{tikzpicture} 
\begin{tikzpicture}[scale=0.45] 
\foreach \x in {(0,0),(2,0),(0,2),(2,2),(0,4),(2,4)} \filldraw \x circle (2pt); 
\node at (1,3) {II.h};  
\draw  [-]  (0,4) to [out=15,in=165] (2,4);
\draw  [-]  (0,4) to [out=-15,in=-165] (2,4);
\draw [-] (0,4) to (0,2); 
\draw [-] (2,4) to (2,2);
\draw [-]  (0,2) to  (0,0); 
\draw [-]  (2,2) to  (2,0); 
\draw [-] (0,0) to (2,0); 
\draw [-] (0,2) to  (2,0);
\draw [-] (0,0) to  (2,2); 
\node [left] at (0,4) {\text{\tiny$-1$}}; \node [right] at (2,4) {\text{\tiny$-1$}}; \node [right] at (2,2) {\text{\tiny$-2$}}; \node [right] at (2,0) {\text{\tiny$-2$}}; 
\node [left] at (0,0) {\text{\tiny$-1$}}; \node [left] at (0,2) {\text{\tiny$-2$}}; 
\end{tikzpicture} 

\begin{tikzpicture}[scale=0.45] 
\foreach \x in {(0,0),(2,0),(0,2),(2,2),(0,4),(2,4)} \filldraw \x circle (2pt); 
\node at (1,3) {III.a};  
\draw  [-]  (0,4) to [out=15,in=165] (2,4);
\draw  [-]  (0,4) to [out=-15,in=-165] (2,4);
\draw [-] (0,4) to (0,2); 
\draw [-] (2,4) to (2,2);
\draw [-]  (0,2) to  (0,0); 
\draw [-]  (2,2) to  (2,0); 
\draw [-]  (0,2) to (2,2); 
\draw [-] (0,0) to [out=15,in=165] (2,0); 
\draw [-] (0,0) to [out=-15,in=-165] (2,0); 
\node [left] at (0,4) {\text{\tiny$-2$}}; \node [right] at (2,4) {\text{\tiny$-2$}}; \node [right] at (2,2) {\text{\tiny$-2$}}; \node [right] at (2,0) {\text{\tiny$-1$}}; 
\node [left] at (0,0) {\text{\tiny$-1$}}; \node [left] at (0,2) {\text{\tiny$-1$}}; 
\end{tikzpicture} 
\begin{tikzpicture}[scale=0.45] 
\foreach \x in {(0,0),(2,0),(0,2),(2,2),(0,4),(2,4)} \filldraw \x circle (2pt); 
\node at (1,3) {III.b};  
\draw  [-]  (0,4) to [out=15,in=165] (2,4);
\draw  [-]  (0,4) to [out=-15,in=-165] (2,4);
\draw [-] (0,4) to (0,2); 
\draw [-] (2,4) to (2,2);
\draw [-]  (0,2) to  (0,0); 
\draw [-]  (2,2) to  (2,0); 
\draw [-]  (0,2) to (2,2); 
\draw [-] (0,0) to [out=15,in=165] (2,0); 
\draw [-] (0,0) to [out=-15,in=-165] (2,0); 
\node [left] at (0,4) {\text{\tiny$-2$}}; \node [right] at (2,4) {\text{\tiny$-2$}}; \node [right] at (2,2) {\text{\tiny$-1$}}; \node [right] at (2,0) {\text{\tiny$-2$}}; 
\node [left] at (0,0) {\text{\tiny$-1$}}; \node [left] at (0,2) {\text{\tiny$-1$}}; 
\end{tikzpicture} 
\begin{tikzpicture}[scale=0.45] 
\foreach \x in {(0,0),(2,0),(0,2),(2,2),(0,4),(2,4)} \filldraw \x circle (2pt); 
\node at (1,3) {III.c};  
\draw  [-]  (0,4) to [out=15,in=165] (2,4);
\draw  [-]  (0,4) to [out=-15,in=-165] (2,4);
\draw [-] (0,4) to (0,2); 
\draw [-] (2,4) to (2,2);
\draw [-]  (0,2) to  (0,0); 
\draw [-]  (2,2) to  (2,0); 
\draw [-]  (0,2) to (2,2); 
\draw [-] (0,0) to [out=15,in=165] (2,0); 
\draw [-] (0,0) to [out=-15,in=-165] (2,0); 
\node [left] at (0,4) {\text{\tiny$-2$}}; \node [right] at (2,4) {\text{\tiny$-1$}}; \node [right] at (2,2) {\text{\tiny$-1$}}; \node [right] at (2,0) {\text{\tiny$-1$}}; 
\node [left] at (0,0) {\text{\tiny$-2$}}; \node [left] at (0,2) {\text{\tiny$-2$}}; 
\end{tikzpicture} 

\begin{tikzpicture}[scale=0.45] 
\foreach \x in {(0,0),(2,0),(0,2),(2,2),(0,4),(2,4)} \filldraw \x circle (2pt); 
\node at (1,3) {III.d};  
\draw  [-]  (0,4) to [out=15,in=165] (2,4);
\draw  [-]  (0,4) to [out=-15,in=-165] (2,4);
\draw [-] (0,4) to (0,2); 
\draw [-] (2,4) to (2,2);
\draw [-]  (0,2) to  (0,0); 
\draw [-]  (2,2) to  (2,0); 
\draw [-]  (0,2) to (2,2); 
\draw [-] (0,0) to [out=15,in=165] (2,0); 
\draw [-] (0,0) to [out=-15,in=-165] (2,0); 
\node [left] at (0,4) {\text{\tiny$-2$}}; \node [right] at (2,4) {\text{\tiny$-1$}}; \node [right] at (2,2) {\text{\tiny$-2$}}; \node [right] at (2,0) {\text{\tiny$-1$}}; 
\node [left] at (0,0) {\text{\tiny$-2$}}; \node [left] at (0,2) {\text{\tiny$-1$}}; 
\end{tikzpicture} 
\begin{tikzpicture}[scale=0.45] 
\foreach \x in {(0,0),(2,0),(0,2),(2,2),(0,4),(2,4)} \filldraw \x circle (2pt); 
\node at (1,3) {III.e};  
\draw  [-]  (0,4) to [out=15,in=165] (2,4);
\draw  [-]  (0,4) to [out=-15,in=-165] (2,4);
\draw [-] (0,4) to (0,2); 
\draw [-] (2,4) to (2,2);
\draw [-]  (0,2) to  (0,0); 
\draw [-]  (2,2) to  (2,0); 
\draw [-]  (0,2) to (2,2); 
\draw [-] (0,0) to [out=15,in=165] (2,0); 
\draw [-] (0,0) to [out=-15,in=-165] (2,0); 
\node [left] at (0,4) {\text{\tiny$-2$}}; \node [right] at (2,4) {\text{\tiny$-1$}}; \node [right] at (2,2) {\text{\tiny$-1$}}; \node [right] at (2,0) {\text{\tiny$-2$}}; 
\node [left] at (0,0) {\text{\tiny$-1$}}; \node [left] at (0,2) {\text{\tiny$-2$}}; 
\end{tikzpicture} 
\begin{tikzpicture}[scale=0.45] 
\foreach \x in {(0,0),(2,0),(0,2),(2,2),(0,4),(2,4)} \filldraw \x circle (2pt); 
\node at (1,3) {III.f};  
\draw  [-]  (0,4) to [out=15,in=165] (2,4);
\draw  [-]  (0,4) to [out=-15,in=-165] (2,4);
\draw [-] (0,4) to (0,2); 
\draw [-] (2,4) to (2,2);
\draw [-]  (0,2) to  (0,0); 
\draw [-]  (2,2) to  (2,0); 
\draw [-]  (0,2) to (2,2); 
\draw [-] (0,0) to [out=15,in=165] (2,0); 
\draw [-] (0,0) to [out=-15,in=-165] (2,0); 
\node [left] at (0,4) {\text{\tiny$-2$}}; \node [right] at (2,4) {\text{\tiny$-1$}}; \node [right] at (2,2) {\text{\tiny$-2$}}; \node [right] at (2,0) {\text{\tiny$-1$}}; 
\node [left] at (0,0) {\text{\tiny$-1$}}; \node [left] at (0,2) {\text{\tiny$-2$}}; 
\end{tikzpicture} 

\begin{tikzpicture}[scale=0.45]  
\node at (-1.25,2) {IV.a};
\foreach \x in {(0,0),(3,0),(1.5,1.25),(1.5,2.75),(0,4),(3,4)} \filldraw \x circle (2pt); 
\draw (0,0)--(0,4)--(3,4)--(3,0)--(0,0); \draw (0,0)--(1.5,1.25)--(3,0);\draw (0,4)--(1.5,2.75)--(3,4);
\draw (1.5,1.25)--(1.5,2.75); 
\node [left] at (0,0) {\text{\tiny$-1$}}; 
\node [left] at (0,4) {\text{\tiny$-2$}}; 
\node[right] at (3,0)  {\text{\tiny$-1$}};
\node[right] at (3,4)  {\text{\tiny$-2$}};
\node[below] at (1.5,1.25)  {\text{\tiny$-1$}};
\node[above] at (1.5,2.75)  {\text{\tiny$-2$}};
\end{tikzpicture}
\begin{tikzpicture}[scale=0.45]  
\node at (-1.25,2) {IV.b};
\foreach \x in {(0,0),(3,0),(1.5,1.25),(1.5,2.75),(0,4),(3,4)} \filldraw \x circle (2pt); 
\draw (0,0)--(0,4)--(3,4)--(3,0)--(0,0); \draw (0,0)--(1.5,1.25)--(3,0);\draw (0,4)--(1.5,2.75)--(3,4);
\draw (1.5,1.25)--(1.5,2.75); 
\node [left] at (0,0) {\text{\tiny$-1$}}; 
\node [left] at (0,4) {\text{\tiny$-1$}}; 
\node[right] at (3,0)  {\text{\tiny$-2$}};
\node[right] at (3,4)  {\text{\tiny$-2$}};
\node[below] at (1.5,1.25)  {\text{\tiny$-1$}};
\node[above] at (1.5,2.75)  {\text{\tiny$-2$}};
\end{tikzpicture}
\begin{tikzpicture}[scale=0.45]  
\node at (-1.25,2) {IV.c};
\foreach \x in {(0,0),(3,0),(1.5,1.25),(1.5,2.75),(0,4),(3,4)} \filldraw \x circle (2pt); 
\draw (0,0)--(0,4)--(3,4)--(3,0)--(0,0); \draw (0,0)--(1.5,1.25)--(3,0);\draw (0,4)--(1.5,2.75)--(3,4);
\draw (1.5,1.25)--(1.5,2.75); 
\node [left] at (0,0) {\text{\tiny$-2$}}; 
\node [left] at (0,4) {\text{\tiny$-1$}}; 
\node[right] at (3,0)  {\text{\tiny$-1$}};
\node[right] at (3,4)  {\text{\tiny$-2$}};
\node[below] at (1.5,1.25)  {\text{\tiny$-1$}};
\node[above] at (1.5,2.75)  {\text{\tiny$-2$}};
\end{tikzpicture}

\begin{tikzpicture}[scale=0.45]  
\node at (-1.25,2) {V.a};
\foreach \x in {(0,0),(2,0),(4,0),(0,2),(2,2),(4,2)} \filldraw \x circle (2pt); 
\draw (0,0)--(0,2)--(2,0)--(2,2)--(4,0)--(4,2)--(2,0); \draw (4,2)--(0,0)--(2,2);\draw (4,0)--(0,2);
\node [below] at (0,0) {\text{\tiny$-2$}}; 
\node [below] at (2,0) {\text{\tiny$-2$}}; 
\node[below] at (4,0)  {\text{\tiny$-1$}};
\node[above] at (0,2)  {\text{\tiny$-1$}};
\node[above] at (2,2)  {\text{\tiny$-1$}};
\node[above] at (4,2)  {\text{\tiny$-2$}};
\end{tikzpicture}\quad
\begin{tikzpicture}[scale=0.45]  
\node at (-1.25,1.5) {V.b};
\foreach \x in {(0,0),(2,0),(4,0),(0,2),(2,2),(4,2)} \filldraw \x circle (2pt); 
\draw (0,0)--(0,2)--(2,0)--(2,2)--(4,0)--(4,2)--(2,0); \draw (4,2)--(0,0)--(2,2);\draw (4,0)--(0,2);
\node [below] at (0,0) {\text{\tiny$-2$}}; 
\node [below] at (2,0) {\text{\tiny$-2$}}; 
\node[below] at (4,0)  {\text{\tiny$-2$}};
\node[above] at (0,2)  {\text{\tiny$-1$}};
\node[above] at (2,2)  {\text{\tiny$-1$}};
\node[above] at (4,2)  {\text{\tiny$-1$}};
\end{tikzpicture} 

Up to equivalence, the (quiver,weight) pairs in the above list yield six 
$4$-dimensional flow polytopes. For each such polytope $\nabla$ we present a tight (quiver,weight) pair 
$(Q,\theta)$ with $\nabla\cong \nabla(Q,\theta)$. Moreover, in the figures below to each arrow $a\in Q_1$ we assign a degree $1$ polynomial $L_a$ in the indeterminates $x,y,z,w$ such that 
\begin{equation*}\label{eq:xyzw}\mathcal{A}(Q,\theta)=\{(L_a(x,y,z,w))_{a\in Q_1}\in \mathbb{R}^{Q_1}\mid  x,y,z,w\in\mathbb{R}\}.\end{equation*} 
In other words, the polytope $\nabla$ as a subset of $\mathbb{R}^4\supset \mathbb{Z}^4$ is described as 
\[\nabla=\{(x,y,z,w)\in \mathbb{R}^4\mid \forall a\in Q_1:\ L_a(x,y,z,w)\ge 0\}\] 
(note that the above set of linear inequalities is minimal by tightness of $(Q,\theta)$).  
%a minimal set of inequalities defining $\nabla$ as a subset of $\mathbb{R}^4\supset \mathbb{Z}^4$ is $L_a\ge 0$ ($a\in Q_1$). 
Furthermore, we list all lattice points of $\nabla\subset \mathbb{R}^4$. 

\begin{tikzpicture}[>=latex]
\foreach \x in {(0,0),(3,0)} \filldraw \x circle (1pt); 
\node [left] at (-1,0) {$\nabla_{\mathrm{II.c}}^{(4)}$};  
\draw  [->, thick]  (0,0) to [out=85,in=95] (3,0);
\node [above] at (1.5,0.8) {\text{\tiny$x$}}; 
\draw  [->, thick]  (0,0) to [out=40,in=140] (3,0); 
\node [above] at (1.5,0.2) {\text{\tiny$y$}};
\draw [->, thick] (0,0) to (3,0); 
\node [above] at (1.5,-0.1)   {\text{\tiny$z$}}; 
\draw  [->, thick]  (0,0) to [out=-40,in=-140] (3,0);
\node [above] at (1.5,-0.65) {\text{\tiny$w$}}; 
\draw  [->, thick]  (0,0) to [out=-85,in=-95] (3,0);
\node [below] at (1.5,-0.7) {\text{\tiny$1-x-y-z-w$}};
\node [left] at (0,0) {\text{\tiny{$-1$}}};
\node [right] at (3,0) {\text{\tiny{$1$}}};
%\node [right] at (-1,-1) { $x,y,z\ge 0$; \quad $x+y+z\le 1$};
\node [right] at (5,0) {$\begin{array}{c|c|c|c|c}
v_0 & v_1 & v_2 & v_3 & v_4\\
\hline 
0&1&0&0&0\\ 0&0&1&0&0\\ 0&0&0&1&0\\ 0&0&0&0&1
\end{array}$}; 
\end{tikzpicture}
 
\begin{tikzpicture}[>=latex]
\foreach \x in {(0,0),(1.5,-1.75),(3,0)} \filldraw \x circle (1pt); 
\node [left] at (-1,-0.5) {$\nabla_{\mathrm{I.d}}^{(4)}$};  
\draw  [->, thick]  (0,0) to  (3,0); \node [below] at (1.5,0) {\text{\tiny$1-x-y-w$}};
\draw  [->, thick]  (0,0) to [out=40,in=140] (3,0); \node [below] at (1.5,0.65) {\text{\tiny$w$}}; 
\draw [->, thick] (0,0) to (1.5,-1.75); \node at (0.9,-0.87) {\text{\tiny$z$}}; 
\draw  [->, thick]  (0,0) to [out=-90,in=180] (1.5,-1.75); \node [left] at (0.9,-1.07) {\text{\tiny$x+y-z$}};
\draw  [->, thick]  (1.5,-1.75) to  (3,0); \node [left] at (2.3,-0.87)   {\text{\tiny$x$}}; 
\draw  [->, thick]  (1.5,-1.75) to [out=0,in=-90] (3,0); \node [right] at (2.45,-0.87) {\text{\tiny$y$}};
\node [left] at (0,0) {\text{\tiny{$-1$}}}; \node [below] at (1.5,-1.75) {\text{\tiny{$0$}}}; \node [right] at (3,0) {\text{\tiny{$1$}}};
\node [right] at (5,-0.7) {$\begin{array}{c|c|c|c|c|c}
v_0 & v_1 & v_2 & v_3 & v_4 & v_5 \\
\hline 
0&0&1&1&0&0\\ 0&0&0&0&1&1\\ 0&0&0&1&0&1 \\ 0&1&0&0&0&0
\end{array}$}; 
\end{tikzpicture}

\begin{tikzpicture}[>=latex]
\foreach \x in {(0,0),(0,1.5),(3,1.5)} \filldraw \x circle (1pt); 
\node [left] at (-1,1) {$\nabla_{\mathrm{II.f}}^{(4)}$};  
\draw  [->, thick]  (0,0) to  (0,1.5); \node [right] at (-0.35,0.65) {\text{\tiny$x+y+z-w$}};
\draw  [->, thick]  (0,0) to [out=130,in=-140] (0,1.5); \node [left] at (-0.2,0.85) {\text{\tiny$w$}}; 
\draw [->, thick] (0,1.5) to (3,1.5); \node [above] at (1.5,2) {\text{\tiny$x$}}; 
\draw  [->, thick]  (0,1.5) to [out=40,in=140] (3,1.5); \node [above] at (1.5,1.35) {\text{\tiny$y$}};
\draw  [->, thick]  (0,1.5) to [out=-40,in=-140] (3,1.5); \node [above] at (1.5,0.8)   {\text{\tiny$z$}}; 
\draw  [->, thick]  (0,0) to [out=0,in=-80] (3,1.5); \node [right] at (1.6,0.3) {\text{\tiny$1-x-y-z$}};
\node [left] at (0,-0.1) {\text{\tiny{$-1$}}}; \node [left] at (0,1.6) {\text{\tiny{$0$}}}; \node [right] at (3,1.6) {\text{\tiny{$1$}}};
\node [right] at (5,1) {$\begin{array}{c|c|c|c|c|c|c}
v_0 & v_1 & v_2 & v_3 & v_4 & v_5 & v_6\\
\hline 
0&1&1&0&0&0&0\\ 0&0&0&1&1&0&0\\ 0&0&0&0&0&1&1\\ 0&0&1&0&1&0&1
\end{array}$}; 
\end{tikzpicture}
 
\begin{tikzpicture}[>=latex]
\foreach \x in {(0,0),(1.5,1),(3,0),(1.5,-1)} \filldraw \x circle (1pt); 
\node [left] at (-1,0) {$\nabla_{\mathrm{III.d}}^{(4)}$};  
\draw  [->, thick]  (0,0) to  (3,0); \node at (1.5,0.1) {\text{\tiny$z$}}; 
\draw [->, thick]  (0,0) to (1.5,1); \node at (0.75,0.5) {\text{\tiny$1-x-y$}};  
\draw [->, thick]  (0,0) to (1.5,-1); \node at (0.75,-0.5) {\text{\tiny$x+y-z$}};  
\draw [->, thick] (3,0) to (1.5,1); \node [right] at (2.45,0.5) {\text{\tiny$x$}}; 
\draw  [->, thick]  (3,0) to [out=90,in=10] (1.5,1); \node  at (2,0.5) {\text{\tiny$y$}};
\draw  [->, thick]  (3,0) to (1.5,-1); \node at (2.25,-0.35) {\text{\tiny$w$}}; 
\draw  [->, thick]  (3,0) to [out=-90,in=-10] (1.5,-1); \node at (3.25,-0.8)  {\text{\tiny$1-x-y-w+z$}}; 
\node [left] at (0,0) {\text{\tiny{$-1$}}}; \node [above] at (1.5,1) {\text{\tiny{$1$}}}; 
\node [below] at (1.5,-1) {\text{\tiny{$1$}}};  \node [right] at (3,0) {\text{\tiny{$-1$}}}; 
\node [right] at (5,0) {$\begin{array}{c|c|c|c|c|c|c|c}
v_0 & v_1 & v_2 & v_3 & v_4 & v_5 & v_6 & v_7\\
\hline 
0&0&1&1&1&0&0&0\\ 0&0&0&0&0&1&1&1\\ 0&0&0&1&1&0&1&1\\ 0&1&0&0&1&0&0&1
\end{array}$}; 
\end{tikzpicture}
 
\begin{tikzpicture}[>=latex]
\foreach \x in {(0,0),(2,0),(0,2),(2,2)} \filldraw \x circle (1pt); 
\node [left] at (-1,1) {$\nabla_{\mathrm{I.a}}^{(4)}$};  
\draw  [->, thick]  (0,0) to  (0,2); \node [left] at (0.9,1.3) {\text{\tiny$1-x-y$}}; 
\draw [->, thick]  (0,0) to (2,0); \node at (1,-0.1) {\text{\tiny$z$}};  
\draw [->, thick]  (0,0) to [out=-50,in=-130] (2,0); \node at (1,-0.6) {\text{\tiny$x+y-z$}};  
\draw [->, thick] (2,0) to (2,2); \node [left] at (2.45,0.8) {\text{\tiny$x+y-w$}}; 
\draw  [->, thick]  (2,0) to  [out=40,in=-40] (2,2); \node [left]  at (2.85,1) {\text{\tiny$w$}};
\draw  [->, thick]  (2,2) to (0,2); \node [below] at (1,2.1) {\text{\tiny$y$}}; 
\draw  [->, thick]  (2,2) to [out=130,in=50] (0,2); \node at (1,2.35)  {\text{\tiny$x$}}; 
\node [left] at (0,0) {\text{\tiny{$-1$}}}; \node [left] at (0,2) {\text{\tiny{$1$}}}; 
\node [right] at (2,0) {\text{\tiny{$0$}}};  \node [right] at (2.05,2) {\text{\tiny{$0$}}}; 
\node [right] at (3.5,1) {$\begin{array}{c|c|c|c|c|c|c|c|c}
v_0 & v_1 & v_2 & v_3 & v_4 & v_5 & v_6 & v_7 & v_8\\
\hline 
0&1&1&1&1&0&0&0&0\\ 0&0&0&0&0&1&1&1&1\\ 0&0&1&0&1&0&1&0&1\\ 0&0&0&1&1&0&0&1&1
\end{array}$}; 
\end{tikzpicture}
 
\begin{tikzpicture}[>=latex];%[scale=0.45]  
\node at (-1.8,1) {$\nabla_{\mathrm{V.b}}^{(4)}$};
\foreach \x in {(0,0),(2,0),(4,0),(0,2),(2,2),(4,2)} \filldraw \x circle (1pt); 
\draw [->, thick] (0,0) to (0,2); %\node at (-0.65,1.5); {\text{\tiny$1-y-w$}}; 
\draw [->, thick] (0,0) to (2,2); 
\draw [->, thick] (0,0) to (4,2); 
\draw [->, thick] (2,0) to (0,2); \node at (0.35,1.5) {\text{\tiny$y$}};
\draw [->, thick] (2,0) to (2,2); 
\draw [->, thick] (2,0) to (4,2);  \node at (3.65,1.5)  {\text{\tiny$x$}};
\draw [->, thick] (4,0) to (0,2);  \node at (1,1.6)  {\text{\tiny$w$}};
\draw [->, thick] (4,0) to (2,2);  \node at (2.5,1.6)  {\text{\tiny$z$}};
\draw [->, thick] (4,0) to (4,2); 
\node [below] at (0,0) {\text{\tiny$-1$}}; 
\node [below] at (2,0) {\text{\tiny$-1$}}; 
\node[below] at (4,0)  {\text{\tiny$-1$}};
\node[above] at (0,2)  {\text{\tiny$1$}};
\node[above] at (2,2)  {\text{\tiny$1$}};
\node[above] at (4,2)  {\text{\tiny$1$}};
\node [right] at (4.5,2) {\ $x,y,z,w\ge 0$}; 
\node [right] at (4.5,1.5){\  $x\le z+w\le 1$};  
\node [right] at (4.5,1) {\ $z\le x+y\le 1$}; 
\node [right] at (4.5,0.5) {\  $y+w\le 1$};
\node [right] at (4.5,-1.1) {$\begin{array}{c|c|c|c|c|c}
v_0 & v_1 & v_2 & v_3 & v_4 & v_5  \\
\hline 
0&0&1&1&0&0\\ 0&0&0&0&1&1\\ 0&0&0&1&0&1\\ 0&1&1&0&0&0
\end{array}$}; 
\end{tikzpicture} 

\begin{remark} 
The polytope $\nabla_{\mathrm{V.b}}^{(4)}$ is the $4$-dimensional \emph{Birkhoff polytope}. 
\end{remark}

\begin{proposition}\label{prop:4-cells}
\begin{itemize} \item[(i)] $\mathcal{P}(\Gamma_{\mathrm{I}})=
\{\nabla_{\mathrm{I.a}}^{(4)}, \quad \nabla_{\mathrm{I.d}}^{(4)}\}$ 
 \item[(ii)] $\mathcal{P}(\Gamma_{\mathrm{II}})=
\{\nabla_{\mathrm{I.d}}^{(4)}, \quad \nabla_{\mathrm{II.c}}^{(4)}, \quad \nabla_{\mathrm{II.f}}^{(4)}\}$ 
 \item[(iii)] $\mathcal{P}(\Gamma_{\mathrm{III}})=
\{\nabla_{\mathrm{II.c}}^{(4)}, \quad \nabla_{\mathrm{II.f}}^{(4)}, \quad 
\nabla_{\mathrm{III.d}}^{(4)}\}$
\item[(iv)] $\mathcal{P}(\Gamma_{\mathrm{IV}})=
\{\nabla_{\mathrm{I.d}}^{(4)}, \quad \nabla_{\mathrm{II.c}}^{(4)}\}$
\item[(v)] $\mathcal{P}(\Gamma_{\mathrm{V}})=
\{\nabla_{\mathrm{II.c}}^{(4)},\quad \nabla_{\mathrm{V.b}}^{(4)}\}$
\end{itemize}
\end{proposition}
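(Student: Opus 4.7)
The plan is to follow the same case-by-case strategy as the proof of Proposition~\ref{prop:all 3-dim compressed}: by Proposition~\ref{prop:P(Gamma)}, for each $\Gamma\in\mathcal{L}_4^{\mathrm{3-reg}}$ the set $\mathcal{P}(\Gamma)$ consists of the equivalence classes of $\nabla(\Gamma^*,\theta)$ as $\theta$ ranges over those weights in $\mathcal{W}(\Gamma)$ with $\dim \nabla(\Gamma^*,\theta)=4$. The five graphs in $\mathcal{L}_4^{\mathrm{3-reg}}$ together with all weights in $\mathcal{W}(\Gamma)$ (up to graph automorphism) are explicitly enumerated in the figures preceding the statement, so what remains is to identify, for each of the $4+8+6+3+2=23$ enumerated pairs, the equivalence class of the corresponding flow polytope among the six polytopes $\nabla_{\mathrm{I.a}}^{(4)}, \nabla_{\mathrm{I.d}}^{(4)}, \nabla_{\mathrm{II.c}}^{(4)}, \nabla_{\mathrm{II.f}}^{(4)}, \nabla_{\mathrm{III.d}}^{(4)}, \nabla_{\mathrm{V.b}}^{(4)}$ displayed above the statement.

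For each pair $(\Gamma^*,\theta)$ I would run the reduction procedure used throughout Section~\ref{sec:class-compressed-3-dim}. Whenever a valency-$2$ sink arises one of whose incoming arrows starts at a source of weight $-1$, Lemma~\ref{lemma:reduction} contracts the other incoming arrow; Proposition~\ref{prop:valency 2}(ii) may be used to reverse the two arrows at a valency-$2$ source, possibly exposing new contractible arrows; and Proposition~\ref{prop:tightness}(iii)--(iv) removes removable arrows and contracts contractible ones. This loop terminates in a tight pair $(Q,\theta')$ with $\nabla(Q,\theta')\cong \nabla(\Gamma^*,\theta)$. If the underlying graph $\Gamma(Q)$ has Euler characteristic strictly less than $4$, the pair is discarded, as then $\dim \nabla(\Gamma^*,\theta)<4$ and it does not contribute to $\mathcal{P}(\Gamma)$.

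Once a tight representative $(Q,\theta')$ is in hand, I would choose a spanning tree in $Q$, read off the four free coordinates $x,y,z,w$ at the arrows outside the tree, and express every other arrow-coordinate as an affine linear form in $x,y,z,w$, exactly as in the II.c step of the proof of Proposition~\ref{prop:all 3-dim compressed}. The resulting system of inequalities $L_a\ge 0$ and the induced enumeration of lattice points can then be matched directly against the six data tables provided above the statement, identifying the equivalence class. Symmetries of the graphs $\Gamma_{\mathrm{I}},\Gamma_{\mathrm{III}},\Gamma_{\mathrm{IV}},\Gamma_{\mathrm{V}}$ cut the number of essentially distinct weights well below the nominal $23$.

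The main obstacle is purely combinatorial bookkeeping across the many cases. The two substantive subtleties are: verifying that whenever the statement asserts $\nabla\in\mathcal{P}(\Gamma)$ the corresponding pair really has dimension $4$, rather than collapsing (as a number of the weights for $\Gamma_{\mathrm{II}}$ and $\Gamma_{\mathrm{III}}$ in fact do, accounting for the absence from the list of polytopes $\nabla_{\mathrm{II.a}}^{(4)}$, etc.); and, for item~(v), confirming that the weight labelled V.b genuinely produces a new $4$-dimensional polytope (the Birkhoff polytope $B_3$), whereas the weight V.a reduces to $\nabla_{\mathrm{II.c}}^{(4)}$. That $\nabla_{\mathrm{V.b}}^{(4)}$ is not equivalent to any previously listed polytope can be detected from its vertex count and facet structure, or alternatively from the fact that its lattice-point description above matches the standard presentation of $B_3$ as the polytope of doubly stochastic $3\times 3$ matrices.
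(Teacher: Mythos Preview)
Your proposal is correct and follows essentially the same approach as the paper's own proof: enumerate the weights in $\mathcal{W}(\Gamma)$ listed in the figures, tighten each $(\Gamma^*,\theta)$ via Lemma~\ref{lemma:reduction} and Proposition~\ref{prop:tightness}, parametrize the affine span using a spanning tree, and then identify each resulting polytope among the six displayed candidates (the paper distinguishes them by the pair (number of facets, number of lattice points), which is your ``vertex count and facet structure''). The paper likewise records exactly which weights collapse to dimension $<4$ and which survive, matching your bookkeeping plan.
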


\begin{proof}
We go through the graphs $\Gamma\in \{\Gamma_{\mathrm{I}},\Gamma_{\mathrm{II}},\Gamma_{\mathrm{III}},
\Gamma_{\mathrm{IV}},\Gamma_{\mathrm{V}}\}  \in \mathcal{L}_4^{\mathrm{3-reg}}$ and the weights in 
$\mathcal{W}(\Gamma)$  listed at the beginning of Section~\ref{sec:4-cells}, and we tighten each such pair 
$(\Gamma^*,\theta)$. This can be done as follows. 
We first apply Lemma~\ref{lemma:reduction} a couple of times. Thus we get rid of (the majority of) the valency $2$ 
sink vertices, and get a (quiver,weight) pair $(Q,\sigma)$.  Then we find a spanning tree in $Q$, and take the coordinates corresponding to the arrows in the complement of the spanning tree as free coordinates $x,y,z,w$ in the affine space $\mathcal{A}(Q,\sigma)$. Going through the arrows of $Q$ in an appropriate order,  for each arrow $a\in Q_1$ we can write down  a  degree $1$ polynomial $L_a(x,y,z,w)$ such that 
$\mathcal{A}(Q,\sigma)=\{(L_a(x,y,z,w))_{a\in Q_1}\in \mathbb{R}^{Q_1}\mid  x,y,z,w\in\mathbb{R}\}$ 
(the degree one polynomials corresponding to the arrows in the complement of the chosen spanning tree are $x,y,z,w$). 
Then  we have 
\[\nabla(Q,\sigma)\cong \{(x,y,z,w)\in \mathbb{R}^4\mid \forall a\in Q_1:\ L_a(x,y,z,w)\ge 0\}\subset \mathbb{R}^4.\] 
By inspection of the system $L_a\ge 0$ ($a\in Q_1$) of inequalities it is a 
simple matter to sort out the redundant inequalities (in other words, to find the contractible arrows), or to detect if the polytope is contained in a proper affine hyperplane of $\mathbb{R}^4$. In the latter case the pair $(\Gamma^*,\theta)$ we started with does not yield a $4$-dimensional polytope.  Otherwise we contract the contractible arrows, and   
end up with a tight pair $(Q',\sigma')$. Moreover, the lattice points in 
$\nabla(Q',\sigma')$ (viewed as a subset of $\mathbb{R}^4$) are just those $4$-dimensional $0-1$ vectors, whose coordinates satisfy the inequalities $L_a\ge 0$ ($a\in Q'_1$). 
It turns out that this process gives us $6$ tight pairs 
$(Q',\sigma')$ with $\chi(Q')=4$, 
yielding the polytopes 
$\nabla_{\mathrm{II.c}}^{(4)}$, $\nabla_{\mathrm{I.d}}^{(4)}$, $\nabla_{\mathrm{II.f}}^{(4)}$, 
$\nabla_{\mathrm{III.d}}^{(4)}$, $\nabla_{\mathrm{I.a}}^{(4)}$, $\nabla_{\mathrm{V.b}}^{(4)}$. 
In fact we get 
\begin{itemize} 
\item[] $\nabla_{\mathrm{II.c}}^{(4)}$ from 
$\mathrm{II.c}$, $\mathrm{III.c}$, 
$\mathrm{IV.b}$, $\mathrm{V.a}$; 
\item[] $\nabla_{\mathrm{I.d}}^{(4)}$ from 
$\mathrm{I.d}$, $\mathrm{II.d}$, 
$\mathrm{II.e}$, $\mathrm{IV.c}$;
\item[] $\nabla_{\mathrm{II.f}}^{(4)}$ from $\mathrm{II.f}$, 
$\mathrm{III.e}$;
\item[] $\nabla_{\mathrm{III.d}}^{(4)}$ from 
$\mathrm{III.d}$; 
\item[] $\nabla_{\mathrm{I.a}}^{(4)}$ from 
$\mathrm{I.a}$; 
\item[] $\nabla_{\mathrm{V.b}}^{(4)}$ from 
$\mathrm{V.b}$.  
\end{itemize} 
These $6$ polytopes are pairwise inequivalent, because 
the pair  
(number of facets, number of lattice points) distinguishes between them 
(recall that the number of facets of $\nabla(Q,\theta)$ equals the number of arrows in $Q$ when $(Q,\theta)$ is tight).  
We get at most $3$-dimensional polytopes from the remaining  (quiver,weight) pairs labeled by 
$\mathrm{I.b}$, $\mathrm{I.c}$, $\mathrm{II.a}$, 
$\mathrm{II.b}$, $\mathrm{II.g}$, $\mathrm{II.h}$, 
$\mathrm{III.a}$, $\mathrm{III.b}$, $\mathrm{III.f}$, 
$\mathrm{IV.a}$. 

We omit the straightforward and lengthy details of the above process. 
An illustration  on a somewhat smaller example can be found in the proof of Proposition~\ref{prop:all 3-dim compressed} 
(see the analysis of the (quiver,weight) pair labeled by II.c in that proof).  
\end{proof}

%%%%%%%%%%%%%%%%%%%%%

\section{Triangulations} \label{sec:triangulation}

We refer to \cite{haase-etal} and \cite{lorea-rambau-santos} for basic notions  about triangulations of lattice polytopes. 
In particular, a \emph{quadratic} triangulation of a lattice polytope is  a regular unimodular triangulation whose  minimal non-faces have two elements. 

In Proposition~\ref{prop:triangulation of 4-cells} below we shall use the following notation: suppose that $v_0,v_1,\dots,v_m$ are the lattice points in a lattice polytope 
$\nabla$. For a subset $\{i_1,\dots,i_k\}\subseteq \{0,1,\dots,m\}$ we write 
$\langle i_1,\dots,i_k\rangle$ for the convex hull of $\{v_{i_1},\dots,v_{i_k}\}$. 
By a \emph{non-face} of a triangulation of a polytope we mean a set of lattice points contained in no face of the triangulation; 
it is a \emph{minimal non-face} if all its proper subsets are contained in some face of the triangulation. 

\begin{proposition} \label{prop:triangulation of 4-cells} 
Each of the polytopes $\nabla_{\mathrm{II.c}}^{(4)}$,  $\nabla_{\mathrm{I.d}}^{(4)}$, $\nabla_{\mathrm{II.f}}^{(4)}$, 
$\nabla_{\mathrm{III.d}}^{(4)}$, $\nabla_{\mathrm{I.a}}^{(4)}$ and the prime compressed $3$-dimensional flow polytopes has a  pulling triangulation whose minimal non-faces have two elements. 
The polytope $\nabla_{\mathrm{V.b}}^{(4)}$ has a pulling triangulation whose minimal non-faces 
have three elements. More concretely, 
$\nabla_{\mathrm{I.a}}^{(3)}$ and $\nabla_{\mathrm{II.c}}^{(4)}$ are unimodular simplices of dimension $3$ and $4$, respectively, 
whereas 
\begin{itemize} 
\item[(i)] pulling at $v_0$ yields the triangulation of $\nabla_{\mathrm{II.c}}^{(3)}$  with maximal cells 
\[\langle 0,2,3,4\rangle,\ \langle 0,1,3,4\rangle \text{ and one minimal non-face }\{v_1,v_2\}.\] 
\item[(ii)] pulling at $v_0$ yields the triangulation of $\nabla_{\mathrm{I.b}}^{(3)}$ with  
maximal cells  
\[\langle 0,1,2,5\rangle, \langle 0,3,4,5\rangle, \langle 0,1,3,5\rangle, \langle 0,2,4,5 \rangle\] 
and  minimal non-faces  

\[\{v_1,v_4\},\ \{v_2,v_3\}.\] 
\item[(iii)] pulling at $v_2$ yields the  triangulation of $\nabla_{\mathrm{I.d}}^{(4)}$ with maximal cells 
\[\langle 0,1,2,3,5\rangle,\ \langle 0,1,2,4,5\rangle \text{ and  one minimal non-face }\{v_3,v_4\}.\] 
\item[(iv)] pulling at $v_2$ and then at $v_3$ yields the  triangulation of $\nabla_{\mathrm{II.f}}^{(4)}$ 
with maximal cells 
\[\langle 0,1,2,3,5\rangle,\  \langle 0,2,3,5,6\rangle, \ \langle 0,2,3,4,6\rangle\] 
and minimal non-faces 
\[\{v_1,v_4\},\ \{v_1,v_6\},\ \{v_4,v_5\}.\]
\item[(v)] pulling at $v_3$, $v_0$ and then at $v_4$ yields the  triangulation of $\nabla_{\mathrm{III.d}}^{(4)}$
with maximal cells 
\[\langle 0,1,2,3,5\rangle, \langle 0,3,5,6,7\rangle, \langle 0,1,3,5,7\rangle, \langle 1,2,3,4,5\rangle, 
\langle 1,3,4,5,7\rangle\]
and minimal non-faces  
\[\{v_0,v_4\},\ \{v_1,v_6\}, \ \{v_2,v_6\},\ \{v_2,v_7\},\ \{v_4,v_6\}.\] 
\item[(vi)] pulling at $v_4$, $v_6$ and then at $v_5$ yields the  triangulation of $\nabla_{\mathrm{I.a}}^{(4)}$ 
with maximal cells 
\[\langle 0,4,5,6,7\rangle, \langle 0,4,6,7,8\rangle,\langle 0,1,2,4,6\rangle, \langle 0,1,4,5,6\rangle, 
\langle 0,1,3,4,5\rangle, \langle 0,3,4,5,7\rangle\] 
and minimal non-faces 
\[\{v_1,v_7\},\{v_1,v_8\},\{v_2,v_3\},\{v_2,v_5\},\{v_2,v_7\},\{v_2,v_8\},\{v_3,v_6\},\{v_3,v_8\},\{v_5,v_8\}.\]
\item[(vii)] pulling at $v_0$ yields the  triangulation of $\nabla_{\mathrm{II.f}}^{(4)}$ 
with maximal cells 
\[\langle 0,2,3,4,5\rangle, \langle 0,1,2,3,5\rangle,\langle 0,1,2,4,5 \rangle 
\text{ and one minimal non-face }\{v_1,v_3,v_4\}.\] 
\end{itemize}
\end{proposition}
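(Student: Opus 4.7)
The plan is to verify each item directly from the iterative definition of a pulling triangulation, combined with the explicit facet description provided by Proposition~\ref{prop:tightness}(i) and the lattice-point tables in Section~\ref{sec:class-compressed-3-dim} and Section~\ref{sec:4-cells}. Recall that for a lattice polytope $\nabla$ and a lattice point $v \in \nabla$, the pulling triangulation at $v$ has as maximal faces the joins $v * \sigma$, where $\sigma$ ranges over the maximal faces of pulling triangulations of the facets of $\nabla$ not containing $v$; iterating with a prescribed order $v_{i_1}, v_{i_2}, \dots$ of lattice points yields a regular triangulation. Since every $\nabla$ under consideration is compressed (being a cell of some $(\Gamma^*,\theta)$ by Proposition~\ref{prop:all compressed}), every pulling triangulation of $\nabla$ is automatically unimodular, cf.\ \cite[Theorem 2.4]{sullivant}, so only the combinatorial data (maximal cells and minimal non-faces) needs to be checked. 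The unimodular simplex claims for $\nabla_{\mathrm{I.a}}^{(3)}$ and $\nabla_{\mathrm{II.c}}^{(4)}$ are immediate, as the lattice-point tables exhibit the origin together with the standard basis vectors.

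For each of the remaining six polytopes I would build a lattice-point/facet incidence table: using the tight (quiver, weight) representation $(Q,\theta)$, extract the defining inequalities $L_a \ge 0$ from the labelled quiver picture and evaluate each $L_a$ at each $v_j$ from the coordinate table. To perform the pulling at $v_{i_1}$, list the facets of $\nabla$ \emph{not} containing $v_{i_1}$ (equivalently, the arrows $a \in Q_1$ with $L_a(v_{i_1}) \ne 0$) and read off the lattice points of each such facet from the incidence table. If a facet is a simplex on those points, its join with $v_{i_1}$ is one maximal cell; otherwise recurse by pulling at $v_{i_2}$ on that facet. Iterating, at most three rounds of pulling suffice in all the listed cases, matching the orders prescribed in items (iii)--(vii). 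Assembling the joins yields precisely the stated lists of maximal cells. Once these are in hand, verifying the stated minimal non-face lists is a finite check: each proposed $S$ must be contained in none of the maximal cells while every $S' \subsetneq S$ is contained in some maximal cell; completeness (no further minimal non-faces) follows from the fact that the listed maximal cells together with their faces cover $\nabla$.

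The main obstacle is the bookkeeping for the larger cases, $\nabla_{\mathrm{I.a}}^{(4)}$ and $\nabla_{\mathrm{III.d}}^{(4)}$, where the three-fold iteration of pulling requires careful tracking of which lattice points survive and which defining forms remain relevant on the lower-dimensional sub-facets reached after each step. A systematic table keyed to the forms $L_a$ makes this tractable and is also well suited to an independent machine verification. The Birkhoff case $\nabla_{\mathrm{V.b}}^{(4)}$ is slightly different in flavour: among the $\binom{6}{5}=6$ potential $4$-simplices on the six permutation-matrix vertices, only three survive the pulling at $v_0$, and each of the three rejected $5$-subsets contains the triple $\{v_1,v_3,v_4\}$, forcing this triple to be the unique minimal non-face. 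This size-three minimal non-face is the combinatorial shadow of the well-known fact that the toric ideal of $B_3$ has no quadratic Gr\"obner basis, and it is the reason for the anomalous rank of $\nabla_{\mathrm{V.b}}^{(4)}$ among the cases treated here.
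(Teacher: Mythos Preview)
Your proposal is correct and follows essentially the same approach as the paper: both rely on the explicit facet inequalities $L_a\ge 0$ coming from the tight $(Q,\theta)$ representations and the tabulated lattice points to carry out the pulling construction directly, with the paper simply working one case ($\nabla_{\mathrm{II.f}}^{(4)}$) in detail and declaring the rest similar. Two small points of exposition to tighten: your ``completeness'' claim for the minimal non-face lists does not literally follow from covering---it is a separate (routine) combinatorial check that every subset missing from all maximal cells contains one of the listed pairs or triples; and in the Birkhoff case (vii), pulling at $v_0$ is governed by the three facets of $\nabla_{\mathrm{V.b}}^{(4)}$ \emph{not containing} $v_0$ (each of which happens to be a tetrahedron), not by surveying all $\binom{6}{5}$ five-subsets, so the phrase ``three rejected $5$-subsets'' mischaracterises the mechanism even though the outcome is correct.
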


\begin{proof}
The list of lattice points and the equations for the facets of the polytopes in question were given in 
Proposition~\ref{prop:all 3-dim compressed} and before 
Proposition~\ref{prop:4-cells}. 
Using this data it is easy to come up with the desired pulling triangulations. 
We give the details for $\nabla_{\mathrm{II.f}}^{(4)}$, the other cases are similar. 
The facets  of  $\nabla_{\mathrm{II.f}}^{(4)}$ are 
\[\langle 0,3,4,5,6\rangle, \langle 0,1,2,5,6\rangle, \langle 0,1,2,3,4\rangle, \langle 0,1,3,5\rangle, 
\langle 1,2,3,4,5,6 \rangle, \langle 0,2,4,6\rangle.\]    
Pulling at $v_2$ we get the subdivision whose $4$-dimensional cells are 
\[\langle 0,1,2,3,5\rangle, \qquad \langle 0,2,3,4,5,6\rangle.\] 
The first cell above is a unimodular simplex, the second is a pyramid 
with apex $v_2$ over the facet $F=\langle 0,3,4,5,6\rangle$ of $\nabla_{\mathrm{II.f}}^{(4)}$. 
To continue we need to determine the facets of this latter pyramid: apart from $F$   
these are the convex hulls of $v_0$ and the facets of $F$. 
The facets of $F$ are 
$\langle 0,5,6\rangle$, $\langle 0,3,4\rangle$, $\langle 0,3,5\rangle$, $\langle 0,4,6\rangle$, $\langle 3,5,6\rangle$. 
Thus the facets of $\langle 0,2,3,4,5,6\rangle$ are 
\[\langle 0,3,4,5,6\rangle, \langle 2,0,5,6\rangle, \langle 2,0,3,4\rangle, \langle 2,0,3,5\rangle, \langle 2,0,4,6\rangle, \langle 2,3,5,6\rangle.\]  
Pulling at $v_3$ we get the full triangulation whose $4$-dimensional simplices are 
\[\langle 0,1,2,3,5\rangle, \langle 0,2,3,5,6\rangle, \langle 0,2,3,4,6\rangle.\]
It is an easy matter to check that the minimal non-faces of the above triangulation are 
$\{v_1,v_4\}$, $\{v_1,v_6\}$, $\{v_4,v_5\}$.  
\end{proof} 

%Recall that $\Gamma_{\mathrm{V}}$ is the complete bipartite graph with $3$ sources and $3$ sinks. 
%{\blue
The key result of this section is that, up to equivalence,  $\nabla_{\mathrm{V.b}}^{(4)}$ is the only flow polytope up to dimension $4$ that does not have a quadratic triangulation. In order to show this we first need to understand the set of weights on the quiver $\Gamma_{\mathrm{V}}^*$ that yield polytopes equivalent to  $\nabla_{\mathrm{V.b}}^{(4)}$. 
For a quiver $Q$ and $\underline{k} \in \mathbb{Z}^{Q_1}$ we will denote by 
$\omega_{\underline{k}}\in \mathbb{Z}^{Q_0}$ the weight given by $$\omega_{\underline{k}}(v)=\sum_{a^+=v}\underline{k}(a)-\sum_{a^-=v}\underline{k}(a).$$

\begin{lemma}\label{lemma:translated_quivers}
For a quiver $Q$ and $\underline{k} \in \mathbb{N}_0^{Q_1}$ and a weight $\theta \in \mathbb{Z}^{Q_0}$  we have that, $\nabla(Q, \theta)$ and  $\nabla(Q, \theta + \omega_{\underline{k}})$ are equivalent if and only if  for each $x \in  \nabla(Q, \theta + \omega_{\underline{k}})$ we have $x(a) \geq \underline{k}(a)$ for all $a \in Q_1$.
\end{lemma}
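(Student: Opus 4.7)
The plan is to exhibit translation by $\underline{k}$ as the natural candidate for the equivalence and to argue both directions separately.

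The map $T\colon\mathbb{R}^{Q_1}\to \mathbb{R}^{Q_1}$ defined by $T(x)=x+\underline{k}$ is an affine linear bijection preserving $\mathbb{Z}^{Q_1}$. A direct computation from the definition of $\omega_{\underline{k}}$ shows that $T$ restricts to a lattice-preserving affine isomorphism $\mathcal{A}(Q,\theta)\to \mathcal{A}(Q,\theta+\omega_{\underline{k}})$. Since $\underline{k}(a)\ge 0$, the inequality $x(a)\ge 0$ is equivalent to $T(x)(a)\ge \underline{k}(a)$ and forces $T(x)(a)\ge 0$, so $T$ further restricts to a lattice-preserving bijection
\[
T\colon \nabla(Q,\theta)\xrightarrow{\ \sim\ } P,\qquad P:=\{y\in \nabla(Q,\theta+\omega_{\underline{k}})\mid y(a)\ge \underline{k}(a)\ \forall a\in Q_1\}.
\]

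The ``if'' direction is now immediate: the hypothesis that every $y\in \nabla(Q,\theta+\omega_{\underline{k}})$ satisfies $y(a)\ge \underline{k}(a)$ for all $a$ says exactly that $P=\nabla(Q,\theta+\omega_{\underline{k}})$, so $T$ itself is an equivalence $\nabla(Q,\theta)\cong \nabla(Q,\theta+\omega_{\underline{k}})$.

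For the ``only if'' direction, suppose the two polytopes are equivalent. Then they share the same dimension $d$ and the same $d$-dimensional normalized lattice volume (an invariant of the equivalence class, e.g.\ as the leading coefficient of the Ehrhart polynomial). Because $T$ preserves the lattice, $P=T(\nabla(Q,\theta))$ has the same normalized volume as $\nabla(Q,\theta)$, and hence as $\nabla(Q,\theta+\omega_{\underline{k}})$. Since $P\subseteq \nabla(Q,\theta+\omega_{\underline{k}})$ and both are $d$-dimensional, they share a common $d$-dimensional affine span, and the equality of $d$-dimensional volumes forces $P=\nabla(Q,\theta+\omega_{\underline{k}})$, which is precisely the claimed coordinatewise condition. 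The degenerate case $\nabla(Q,\theta)=\emptyset$ is trivial, since then equivalence makes $\nabla(Q,\theta+\omega_{\underline{k}})$ empty too. The main (modest) obstacle is this volume comparison; everything else is bookkeeping around $T$.
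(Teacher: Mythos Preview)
Your proof is correct and uses the same translation map $x\mapsto x+\underline{k}$ as the paper, which records only the identity $\varphi(\nabla(Q,\theta))=\{y\in\nabla(Q,\theta+\omega_{\underline{k}}):y\ge\underline{k}\}\subseteq\nabla(Q,\theta+\omega_{\underline{k}})$ and declares the claim proved. Your volume comparison for the ``only if'' direction makes explicit the step the paper leaves to the reader; an equally short alternative is to note that a strict containment $P\subsetneq\nabla(Q,\theta+\omega_{\underline{k}})$ would miss a vertex (hence a lattice point) of the larger polytope, contradicting that equivalent lattice polytopes have the same number of lattice points.
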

\begin{proof}
It is straightforward to see that for the translation map $\varphi: \mathbb{R}^{Q_1} \rightarrow \mathbb{R}^{Q_1}$ defined as $\varphi(x) = x + \underline{k}$ we have that,
$$\varphi(\nabla(Q, \theta)) =   \{ x \in \nabla(Q, \theta + \omega_{\underline{k}}) \mid \forall a \in Q_1 : x(a) \geq k(a) \} \subseteq \nabla(Q, \theta + \omega_{\underline{k}}),$$
proving the claim.
\end{proof}

Let us denote the arrows and vertices of $\Gamma_{\mathrm{V}}^*$ as in the figure below.
$$\begin{tikzpicture}[>=open triangle 45,scale=0.8]   
\foreach \x in {(0,0),(2,0),(4,0),(0,4),(2,4),(4,4),(1,2),(4,2),(2,2)} \filldraw \x circle (2pt); 
\draw [->] (0,0)--(1,2); \draw [->] (2,4)--(1,2); \draw (4,0)--(0,4); \draw [->] (4,0)--(4,2); \draw [->] (4,4)--(4,2);
\draw[->] (4,0)--(2,2); \draw [->] (0,4)--(2,2); 
\node[right] at (2,2) {$v_{3,1}$}; \node[left] at (3,1) {$a_{3,1}$}; \node[left] at (1,3) {$b_{3,1}$};
\node[left] at (0.5,1) {$a_{1,2}$}; \node[right] at (1.5,3) {$b_{1,2}$}; 
\node[right] at (4,1) {$a_{3,3}$}; \node[right] at (4,3) {$b_{3,3}$}; 
\node[left] at (1,2) {$v_{1,2}$};
\node[above] at (0,4) {$w_1$};
\node[above] at (2,4) {$w_2$};
\node[above] at (4,4) {$w_3$};
\node[right] at (4,2) {$v_{3,3}$};
\node[below] at (0,0) {$u_1$};
\node[below] at (2,0) {$u_2$};
\node[below] at (4,0) {$u_3$};
\end{tikzpicture}$$ 
Let $\theta_{\mathrm{V.b}}$ denote the weight defined as $\theta_{\mathrm{V.b}}(u_i) = -1$, $\theta_{\mathrm{V.b}}(w_i) =  -2$ and $\theta_{\mathrm{V.b}}(v_{i,j}) = 1$, for all $i,j: 1 \leq i,j \leq 3$.

\begin{lemma}\label{lemma:grobner_weights}
For an integer weight $\theta$, the polytope $\nabla(\Gamma_{\mathrm{V}}^*,\theta)$ is equivalent to the Birkhoff polytope $\nabla_{\mathrm{V.b}}^{(4)}$ if and only if - after possibly reversing the role of the $u_i$ and the $w_i$ - we have that $$\theta = \theta_{\mathrm{V.b}} + \omega_{\underline{k}}$$ for some $\underline{k}\in \mathbb{N}_0^{{(\Gamma_{\mathrm{V}}^*)}_1}$ satisfying $\underline{k}(a_{i,j}) = 0$ for for all $i,j: 1 \leq i,j \leq 3$. 
In particular, $\nabla(\Gamma_{\mathrm{V}}^*,\theta)\cong \nabla_{\mathrm{V.b}}^{(4)}$ implies 
$\theta(u_1)=\theta(u_2)=\theta(u_3)=-1$ or $\theta(w_1)=\theta(w_2)=\theta(w_3)=-1$. 
\end{lemma}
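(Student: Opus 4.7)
\noindent The backward implication is a direct application of Lemma~\ref{lemma:translated_quivers}. Assuming the decomposition $\theta = \theta_{\mathrm{V.b}} + \omega_{\underline{k}}$ with $\underline{k}(a_{i,j}) = 0$ for all $i,j$, I evaluate $\omega_{\underline{k}}$ vertex by vertex on $\Gamma_{\mathrm{V}}^*$: at $u_i$ one has $\omega_{\underline{k}}(u_i) = -\sum_j \underline{k}(a_{i,j}) = 0$, so $\theta(u_i) = -1$, and at $v_{i,j}$ one has $\omega_{\underline{k}}(v_{i,j}) = \underline{k}(b_{i,j})$, so $\theta(v_{i,j}) = 1 + \underline{k}(b_{i,j})$. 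For any $x \in \nabla(\Gamma_{\mathrm{V}}^*, \theta)$, the flow balance at $u_i$ forces $\sum_{j'} x(a_{i,j'}) = 1$, hence $x(a_{i,j}) \leq 1$; combined with the balance $x(a_{i,j}) + x(b_{i,j}) = 1 + \underline{k}(b_{i,j})$ at $v_{i,j}$, this yields $x(b_{i,j}) \geq \underline{k}(b_{i,j})$. Together with the trivial $x(a_{i,j}) \geq 0 = \underline{k}(a_{i,j})$, the hypothesis of Lemma~\ref{lemma:translated_quivers} is verified; the case in which the roles of $u$ and $w$ are swapped is entirely analogous.

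\medskip
\noindent For the forward implication, I would first reduce to a weight lying in $\mathcal{W}(\Gamma_{\mathrm{V}})$. Set $\underline{k}_0(a) := \min\{x(a) : x \in \nabla(\Gamma_{\mathrm{V}}^*, \theta)\}$ and $\theta_0 := \theta - \omega_{\underline{k}_0}$. Lemma~\ref{lemma:translated_quivers} applies tautologically to give $\nabla(\Gamma_{\mathrm{V}}^*, \theta) \cong \nabla(\Gamma_{\mathrm{V}}^*, \theta_0)$, and by construction every coordinate on $\nabla(\Gamma_{\mathrm{V}}^*, \theta_0)$ attains minimum $0$. Since $\nabla_{\mathrm{V.b}}^{(4)}$ is compressed, so is $\nabla(\Gamma_{\mathrm{V}}^*, \theta_0)$; combining compressedness with minimum $0$ bounds each facet-defining coordinate in $[0,1]$, and running the vertex-by-vertex analysis from the proofs of Propositions~\ref{prop:P(Gamma)} and~\ref{prop:all compressed} then forces $\theta_0(v_{i,j}) = 1$ and $\theta_0(u_i), \theta_0(w_j) \in \{-1, -2\}$, so $\theta_0 \in \mathcal{W}(\Gamma_{\mathrm{V}})$. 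By Proposition~\ref{prop:4-cells}(v) the polytope $\nabla(\Gamma_{\mathrm{V}}^*, \theta_0)$ lies in $\{\nabla_{\mathrm{II.c}}^{(4)}, \nabla_{\mathrm{V.b}}^{(4)}\}$, and since these have different numbers of facets and lattice points, $\theta_0$ is of V.b type; applying the $u \leftrightarrow w$ symmetry of $\Gamma_{\mathrm{V}}$ if necessary, we may assume $\theta_0 = \theta_{\mathrm{V.b}}$.

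\medskip
\noindent It remains to show $\underline{k}_0(a_{i,j}) = 0$ for every $(i,j)$, for then $\underline{k} := \underline{k}_0$ delivers the required decomposition and the ``in particular'' clause $\theta(u_i) = -1$ drops out of $\omega_{\underline{k}_0}(u_i) = -\sum_j \underline{k}_0(a_{i,j}) = 0$. This is where I expect the main obstacle to lie. My plan is to tighten $(\Gamma_{\mathrm{V}}^*, \theta)$ by contracting arrows: since $\theta_0 = \theta_{\mathrm{V.b}}$, the $u_i$-balance on $\nabla(\Gamma_{\mathrm{V}}^*, \theta_0)$ forces $y(a_{i,j}) \leq 1$, so $y(b_{i,j}) = 1 - y(a_{i,j}) \geq 0$ is implied, making each $b_{i,j}$ contractible for $(\Gamma_{\mathrm{V}}^*, \theta_0)$; the translation equivalence transfers contractibility to $(\Gamma_{\mathrm{V}}^*, \theta)$. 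Successively contracting the nine $b$-arrows via Proposition~\ref{prop:tightness}(iv) yields a tight pair $(Q', \theta')$ in which $Q'$ is the bipartite quiver $K_{3,3}$ oriented $u_i \to w_j$, $\theta'(u_i) = \theta(u_i)$, and $\theta'(w_j) = \theta(w_j) + \sum_i \theta(v_{i,j})$. The polytope $\nabla(Q', \theta')$ is the $3 \times 3$ transportation polytope with row sums $-\theta'(u_i)$ and column sums $\theta'(w_j)$, and it remains equivalent to $B_3 = \nabla_{\mathrm{V.b}}^{(4)}$. The hard step is then a short combinatorial verification that a $3 \times 3$ transportation polytope with positive integer marginals is equivalent to $B_3$ only when all its row sums and column sums equal $1$, which I would do by a direct lattice-point count (the transportation number $T(R,C) = 6$ with positive marginals $R, C$ of equal sum forces $R = C = (1,1,1)$ by inspection of the small cases). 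From $-\theta'(u_i) = 1$ we obtain $\theta(u_i) = -1$ for every $i$, whence $\sum_j \underline{k}_0(a_{i,j}) = 0$ and, by non-negativity of $\underline{k}_0$, each $\underline{k}_0(a_{i,j}) = 0$, concluding the proof.
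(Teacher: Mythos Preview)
Your backward direction and your reduction to $\theta_0\in\mathcal{W}(\Gamma_{\mathrm{V}})$ are essentially the paper's argument (the paper phrases the reduction via the 9 pairwise non-parallel facets of $B_3$, but the content is the same). The difficulty is in the last step, where you want $\underline{k}_0(a_{i,j})=0$.

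The step ``the translation equivalence transfers contractibility to $(\Gamma_{\mathrm{V}}^*,\theta)$'' is a genuine gap. Contractibility of an arrow $b$ is a statement about the \emph{system of inequalities} on the affine space $\mathcal{A}(Q,\theta)$, not about the equivalence class of the polytope. Under the translation $x=y+\underline{k}_0$, the condition ``$x(c)\ge 0$ for $c\neq b_{i,j}$'' becomes ``$y(c)\ge -\underline{k}_0(c)$ for $c\neq b_{i,j}$'', which is weaker than ``$y(c)\ge 0$''; so contractibility of $b_{i,j}$ for $\theta_0$ does not imply it for $\theta$. Concretely, from the $u_i$-balance one only gets $x(a_{i,j})\le -\theta(u_i)=1+\sum_{j'}\underline{k}_0(a_{i,j'})$, hence $x(b_{i,j})\ge \theta(v_{i,j})+\theta(u_i)=\underline{k}_0(b_{i,j})-\sum_{j'\neq j}\underline{k}_0(a_{i,j'})$, which need not be $\ge 0$ unless you already know $\underline{k}_0(a_{i,j'})=0$. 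Since the whole point is to \emph{prove} $\underline{k}_0(a_{i,j})=0$, this is circular. And if some $a_{i,j}$ rather than $b_{i,j}$ is the contractible arrow, contracting it merges $v_{i,j}$ with $u_i$, so your formula $\theta'(u_i)=\theta(u_i)$ fails and the transportation-polytope reduction collapses.

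The paper bypasses this entirely: assuming $\underline{k}(a_{1,1})\ge 1$, it writes down an explicit $x\in\mathcal{A}(\Gamma_{\mathrm{V}}^*,\theta_{\mathrm{V.b}})$ with $x(a_{1,1})=-1$ and all other coordinates in $\{0,1,2\}$, so that $y:=x+\underline{k}$ lies in $\nabla(\Gamma_{\mathrm{V}}^*,\theta)$ but has $y(a_{1,1})<\underline{k}(a_{1,1})$; Lemma~\ref{lemma:translated_quivers} then gives the contradiction directly. This one-line construction replaces both your contractibility claim and the (only sketched) classification of $3\times 3$ transportation polytopes equivalent to $B_3$.
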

\begin{proof}
First let us assume that $\theta= \theta_{\mathrm{V.b}} + \omega_{\underline{k}}$ for some $\underline{k}\in \mathbb{N}_0^{{(\Gamma_{\mathrm{V}}^*)}_1}$ satisfying $\underline{k}(a_{i,j}) = 0$ for for all $i,j: 1 \leq i,j \leq 3$. 
It follows from $\theta(u_i)=-1$ that for any $x \in \nabla(\Gamma_{\mathrm{V}}^*,\theta)$ we have $x(a_{i,j}) \leq 1$ and $x(b_{i,j}) \geq \theta(v_{i,j}) - 1 = \underline{k}(b_{i,j})$ for all $1 \leq i, j \leq 3$. Let $\underline{k}\in \mathbb{N}_0^{{(\Gamma_{\mathrm{V}}^*)}_1}$ be defined as $\underline{k}(a_{i,j}) = 0$ and $\underline{k}(b_{i,j}) = \theta(v_{i,j}) - 1$. Since $\underline{k}(a_{i,j}) = 0$ we have that $x \geq \underline{k}$ and then by Lemma~\ref{lemma:translated_quivers}: 
$$\nabla(\Gamma_{\mathrm{V}}^*,\theta) \cong \nabla(\Gamma_{\mathrm{V}}^*,\theta_{\mathrm{V.b}}).$$ 

For the other direction let $\theta$ be an integer weight such that $\nabla(\Gamma_{\mathrm{V}}^*,\theta) \cong \nabla_{\mathrm{V.b}}^{(4)}$. Since the Birkhoff polytope has 9 facets, none of which are parallel to an other, we have that for each $i,j$ one of the faces 
\[\{x\in \nabla(\Gamma_{\mathrm{V}}^*,\theta)\mid x(a_{i,j})=0\} \text{ or }\{x\in \nabla(\Gamma_{\mathrm{V}}^*,\theta)\mid x(b_{i,j})=0\}\]  
is a facet. Since for $\nabla(\Gamma_{\mathrm{V}}^*,\theta)$ we have $x(a_{i,j}) = \theta(v_{i,j}) - x(b_{i,j})$ and the Birkhoff polytope is compressed we have that for some non-negative integer vector $\underline{k}$: 
\begin{equation}\label{eq:23/1} \nabla(\Gamma_{\mathrm{V}}^*,\theta) = \nabla(\Gamma_{\mathrm{V}}^*,\theta,{\underline{k}},{\underline{k}}+{\underline{1}})\cong \nabla(\Gamma_{\mathrm{V}}^*,\theta- \omega_{\underline{k}},{\underline{0}},{\underline{1}}).\end{equation} 
By the first equality in \eqref{eq:23/1} and Lemma~\ref{lemma:translated_quivers} we have 
\begin{equation}\label{eq:23/2} 
 \nabla(\Gamma_{\mathrm{V}}^*,\theta)\cong \nabla(\Gamma_{\mathrm{V}}^*,\theta - \omega_{\underline{k}}). 
\end{equation}
From \eqref{eq:23/1} and \eqref{eq:23/2} we conclude 
$$\nabla(\Gamma_{\mathrm{V}}^*,\theta - \omega_{\underline{k}}) = \nabla(\Gamma^*,\theta - \omega_{\underline{k}},{\underline{0}},{\underline{1}}).$$
Arguing the same way as in the proof of Proposition~\ref{prop:P(Gamma)} we conclude that $\theta - \omega_{\underline{k}} \in \mathcal{W}(\Gamma_{\mathrm{V}})$, meaning that $\theta-\omega_{\underline{k}}$ takes value $1$ on each sink and $-1$ on $3$ of the sources and $-2$ on the remaining $3$ sources. By the assumption that $\nabla(\Gamma_{\mathrm{V}}^*,\theta)$ is the Birkhoff polytope and by the proof of Proposition~\ref{prop:4-cells} 
(i.e. the quiver $\Gamma_{\mathrm{V}}^*$ with the weight labeled by $\mathrm{V.a}$ in Section~\ref{sec:4-cells} yields the polytope $\nabla_{\mathrm{II.c}}^{(4)}$, which is not equivalent to the Birkhoff polytope), we conclude that $\theta - \omega_{\underline{k}}= \theta_{\mathrm{V.b}}$ after possibly reversing the two sides of the bipartition. 
So by symmetry  we can assume that 
$$(\theta - \omega_{\underline{k}})(u_1) = (\theta - \omega_{\underline{k}})(u_2) = (\theta - \omega_{\underline{k}})(u_3) = -1$$
and
$$(\theta - \omega_{\underline{k}})(w_1) = (\theta - \omega_{\underline{k}})(w_2) = (\theta - \omega_{\underline{k}})(w_3) = -2.$$
We need to show that ${\underline{k}}(a_{i, j}) = 0$ for for all $i,j: 1 \leq i,j \leq 3$. 
Since $\theta_{\mathrm{V.b}}$ is invariant under permuting the vertices on each side of the bipartition, it is sufficient to show that $\underline{k}(a_{1,1}) = 0$.
Suppose for contradiction that ${\underline{k}}(a_{1, 1}) \ge 1$. 
 
Let $x\in \mathcal{A}(\Gamma_{\mathrm{V}}^*,\theta_{\mathrm{V.b}})$ be defined as follows:
$$x(a_{1,1}) = -1 \quad x(a_{1,2}) = x(a_{1,3}) = x(a_{2,1}) = x(a_{3,1}) = 1$$
$$x(a_{2,2}) = x(a_{2,3}) = x(a_{3,2}) = x(a_{3,3}) = 0$$
$$x(b_{i,j}) = 1 - x(a_{i,j}) \quad \forall i,j: 1 \leq i,j \leq 3.$$
We then have 
$y:=x+\underline{k}\in \nabla(\Gamma_{\mathrm{V}}^*,\theta)$ with $y(a_{1,1})<\underline{k}(a_{1,1})$, hence by Lemma~\ref{lemma:translated_quivers}, $\nabla(\Gamma_{\mathrm{V}}^*,\theta)$ is not equivalent to 
$\nabla(\Gamma_{\mathrm{V}}^*,\theta-\omega_{\underline{k}})$, contradicting to \eqref{eq:23/2}. 
\end{proof}

%%%%

\begin{theorem}\label{thm:cubic triangulation} 
Let $\nabla$ be a flow polytope of dimension $1\le \dim(\nabla)\le 4$. If $\nabla$ is not equivalent to the Birkhoff polytope then it has a quadratic triangulation. The Birkhoff polytope has a regular unimodular triangulation whose minimal non-faces have at most $3$ elements.
\end{theorem}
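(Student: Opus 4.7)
The plan is to combine the regular hyperplane subdivision of Corollary~\ref{cor:3-regular} with the cell-by-cell pulling triangulations of Proposition~\ref{prop:triangulation of 4-cells}. First I reduce to prime $\nabla$: a product of regular unimodular triangulations whose minimal non-faces have $\le k$ elements has the same property, and the Birkhoff polytope $\nabla_{\mathrm{V.b}}^{(4)}$ is prime by Proposition~\ref{prop:prime}. The cases $\dim(\nabla)\le 2$ are immediate, since the polytope is then a unimodular simplex with no minimal non-face.

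For $\dim(\nabla)=3$, Corollary~\ref{cor:3-regular} subdivides $\nabla$ into cells in $\mathcal{P}(\Gamma)$ for some $\Gamma\in\mathcal{L}_3^{\mathrm{3-reg}}$. Proposition~\ref{prop:all 3-dim compressed} lists the three possible cells, and each of them admits a quadratic pulling triangulation at $v_0$ by Proposition~\ref{prop:triangulation of 4-cells}(i)--(ii) (the simplex $\nabla_{\mathrm{I.a}}^{(3)}$ being a trivial case). Since the subdivision is regular and we pull at the common origin vertex in every cell, the cell triangulations combine into a regular unimodular quadratic triangulation of $\nabla$.

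For $\dim(\nabla)=4$, the Birkhoff case follows from the last part of Proposition~\ref{prop:triangulation of 4-cells}, which gives a regular unimodular triangulation of $\nabla_{\mathrm{V.b}}^{(4)}$ with a single $3$-element minimal non-face. For non-Birkhoff $\nabla$ I again apply Corollary~\ref{cor:3-regular} with some $\Gamma\in\mathcal{L}_4^{\mathrm{3-reg}}$. If $\Gamma\in\{\Gamma_{\mathrm{I}},\Gamma_{\mathrm{II}},\Gamma_{\mathrm{III}},\Gamma_{\mathrm{IV}}\}$, Proposition~\ref{prop:4-cells} guarantees that no cell is equivalent to $\nabla_{\mathrm{V.b}}^{(4)}$; Proposition~\ref{prop:triangulation of 4-cells}(iii)--(vi) then supplies a quadratic pulling triangulation of each cell, and coherent pulling across the subdivision combines these into a quadratic triangulation of $\nabla$.

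The delicate sub-case is $\Gamma=\Gamma_{\mathrm{V}}$, where a cell may be equivalent to $\nabla_{\mathrm{V.b}}^{(4)}$. Here Lemma~\ref{lemma:grobner_weights} is decisive: writing $\nabla\cong\nabla(\Gamma_{\mathrm{V}}^*,\theta)$, a cell $\nabla(\Gamma_{\mathrm{V}}^*,\theta-\omega_{\underline{k}})\cong\nabla_{\mathrm{V.b}}^{(4)}$ forces (via $\theta-\omega_{\underline{k}}\in\mathcal{W}(\Gamma_{\mathrm{V}})$, i.e.\ the sink values equal $1$ as in the proof of Proposition~\ref{prop:P(Gamma)}) the identity $\theta-\omega_{\underline{k}}=\theta_{\mathrm{V.b}}$ up to the $u_i\leftrightarrow w_j$ symmetry, hence $\theta=\theta_{\mathrm{V.b}}+\omega_{\underline{k}}$. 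If $\underline{k}(a_{i,j})=0$ for all $i,j$, Lemma~\ref{lemma:grobner_weights} yields $\nabla\cong\nabla_{\mathrm{V.b}}^{(4)}$, contradicting the hypothesis. In the remaining situation $\nabla$ is a non-Birkhoff (possibly capacitated) $3\times 3$ transportation polytope with chassis $\Gamma_{\mathrm{V}}$, and the quadratic Gr\"obner basis of its toric ideal --- hence a quadratic triangulation --- is supplied by \cite{ohsugi-hibi_2010} for the proper multiples $nB_3$ ($n>1$) and by \cite{haase-paffenholz} for the general $3\times 3$ transportation case. The main obstacle is verifying the compatible gluing of cell-wise pulling triangulations across the hyperplane subdivision, ensuring that no minimal non-face of size $>2$ appears at shared facets; once this is settled, and the residual chassis-$\Gamma_{\mathrm{V}}$ case is handed to the cited references, the theorem follows.
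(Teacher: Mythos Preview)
Your reduction to prime polytopes, the treatment of dimensions $\le 3$, and the case $\Gamma\in\{\Gamma_{\mathrm{I}},\Gamma_{\mathrm{II}},\Gamma_{\mathrm{III}},\Gamma_{\mathrm{IV}}\}$ in dimension $4$ follow the paper's strategy. The gluing concern you flag at the end is precisely what Lemma~\ref{lemma:haase-paffenholz} is for: a minimal non-face of the combined pulling refinement either lies in a single cell (so its size is bounded by the cell's bound) or is separated by one of the slicing hyperplanes and therefore has size $2$. So that obstacle is not open once you invoke the lemma.

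The genuine gap is your treatment of $\Gamma=\Gamma_{\mathrm{V}}$ when $\nabla$ is not Birkhoff. Handing this off to \cite{haase-paffenholz} and \cite{ohsugi-hibi_2010} does not close the argument. First, the polytopes $\nabla(\Gamma_{\mathrm{V}}^*,\theta)$ are \emph{capacitated} transportation polytopes (each coordinate bounded above by $\theta(v_{i,j})$), not the ordinary $3\times 3$ transportation polytopes of \cite{haase-paffenholz}; indeed the paper's introduction explicitly records that \cite{haase-paffenholz} left the multiples $nB_3$ unsettled. Second, \cite{ohsugi-hibi_2010} yields a \emph{square-free} quadratic initial ideal (which is what produces a quadratic triangulation via Sturmfels) only for $n$ divisible by $2$ or $3$, and a quadratic Gr\"obner basis by itself does not give a quadratic triangulation unless the initial ideal is square-free. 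Third, your case analysis only covers the situation in which some cell is Birkhoff; you do not say what happens when all cells lie in $\mathcal{P}(\Gamma_{\mathrm{V}})\setminus\{\nabla_{\mathrm{V.b}}^{(4)}\}=\{\nabla_{\mathrm{II.c}}^{(4)}\}$ (this case is easy, but it is missing).

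The paper's argument for $\Gamma=\Gamma_{\mathrm{V}}$ is instead a self-contained induction on $\sum_v|\theta(v)|$. One slices $\nabla(\Gamma_{\mathrm{V}}^*,\theta)$ by a \emph{single} hyperplane $x(a_{1,1})=1$ into two pieces with strictly smaller weight-sum and applies the induction hypothesis, using Lemma~\ref{lemma:haase-paffenholz} to reassemble---unless one of the two pieces is itself Birkhoff. In that exceptional situation Lemma~\ref{lemma:grobner_weights} is used not to derive a contradiction (as you attempt) but to pin $\theta$ down explicitly, after which one checks that the tightening of $(\Gamma_{\mathrm{V}}^*,\theta)$ has chassis equal to the five-vertex contracted descendant of $\Gamma_{\mathrm{V}}$, which is also a contracted descendant of $\Gamma_{\mathrm{IV}}$. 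Proposition~\ref{prop:Gamma^*} then reduces this case to $\Gamma=\Gamma_{\mathrm{IV}}$, already handled.
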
 
%} %endblue
\begin{comment}{\red
 \begin{remark}
Taking into account Proposition~\ref{prop:3-regular} and Figure~\ref{figure:hasse diagram} we see that the moral of 
Theorem~\ref{thm:cubic triangulation} is that a "typical" flow polytope of dimension at most $4$ has a quadratic triangulation. 
 \end{remark} 
} \end{comment}

The proof follows the strategy of \cite{haase-paffenholz} that can be summarized in the following Lemma: 

\begin{lemma}\label{lemma:haase-paffenholz} 
Suppose that 
$\nabla=\nabla_1\cup\dots\cup\nabla_q$ 
is a regular hyperplane subdivision of the $d$-dimensional lattice polytope $\nabla$, where $\nabla_i$ is a 
$d$-dimensional lattice polytope ($i=1,\dots,q$).  Suppose that each $\nabla_i$ has a pulling triangulation whose minimal 
non-faces have at most $n$ elements. 
\begin{itemize}
\item[(i)] The triangulations of the $\nabla_i$ together give a regular triangulation of $\nabla$  whose minimal non-faces 
have at most $\max\{n,2\}$ elements. 
\item[(ii)] If each $\nabla_i$ is compressed, then the above triangulation of $\nabla$ is unimodular. 
\end{itemize} 
\end{lemma}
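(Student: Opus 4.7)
The plan is to construct the global triangulation $T$ by gluing the given pulling triangulations $T_i$ along the slicing hyperplanes, verify regularity by combining the convex functions realizing the subdivision and the individual $T_i$, and then analyze the minimal non-faces via a flag property of the hyperplane cells; part (ii) follows from a standard property of compressed polytopes.

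First I would normalize the pulling data: by the standard fact that the restriction of a pulling triangulation to a face is itself a pulling triangulation in the induced order, each local pulling order on $\mathrm{vert}(\nabla_i)$ may be taken to be the restriction of one fixed total order on $V:=\bigcup_i\mathrm{vert}(\nabla_i)$ without altering any $T_i$. Adjacent pieces $\nabla_i,\nabla_j$ meet in the lattice polytope $\nabla_i\cap\nabla_j$, which lies on a slicing hyperplane and is a face of both; the restrictions of $T_i$ and $T_j$ to this intersection then coincide (both being the pulling triangulation of the intersection in the induced order), so $T:=\bigcup_i T_i$ is a well-defined simplicial complex subdividing $\nabla$.

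For regularity, let $\psi:\nabla\to\mathbb{R}$ be a convex piecewise-linear function realizing the hyperplane subdivision (linear on each $\nabla_i$, strictly convex across slicing hyperplanes), and let $\psi_i:\nabla_i\to\mathbb{R}$ realize $T_i$. Set $\Psi(v):=\psi(v)+\varepsilon\sum_{v\in\nabla_i}\psi_i(v)$ for $v\in V$. For $\varepsilon>0$ small enough, the dominant term $\psi$ forces the lower convex hull of $\{(v,\Psi(v))\}$ to refine the subdivision into the $\nabla_i$, and the $\varepsilon$-perturbation then recovers $T_i$ on each piece. For (i), each face of $T$ is a simplex of some $T_i$, hence its vertex set lies in some $\nabla_i$. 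Let $S\subseteq V$ be a minimal non-face of $T$. If $S\subseteq\nabla_i$, then every proper subset of $S$ is a face of $T_i$, so $S$ is a minimal non-face of $T_i$ and $|S|\le n$. Otherwise I will invoke the \emph{flag property} of hyperplane cells: a subset $S\subseteq V$ lies in some $\nabla_i$ if and only if every pair of $S$ does, because cell membership amounts to a sign choice in the closed half-spaces of each slicing hyperplane, and if no pair of $S$ is split by any hyperplane then $S$ lies in a common closed half-space for each slicing hyperplane, hence in a cell. Consequently $|S|\ge 3$ would, by minimality, place every pair of $S$ in some $\nabla_i$ and thus $S$ itself in some $\nabla_i$, a contradiction. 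So $|S|=2$, yielding the bound $\max\{n,2\}$.

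For (ii), it is a standard fact that every pulling triangulation of a compressed lattice polytope is unimodular (see e.g.~\cite[Theorem~2.4]{sullivant}, \cite[Theorem~2.3]{ohsugi-hibi}), so each $T_i$ is unimodular; since the top-dimensional simplices of $T$ are precisely those of the $T_i$, $T$ is unimodular as well. The main point requiring care is the flag property used in the non-face analysis, but it reduces to the elementary half-space observation above; verifying that the perturbation $\Psi$ realizes $T$ in Step~2 is routine once the pulling orders have been globalized.
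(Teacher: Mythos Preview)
Your non-face analysis (the ``flag property'' of half-space cells) and the unimodularity step are exactly the paper's arguments, and your convex-function perturbation for regularity is a more explicit version of what the paper obtains by citing that pulling refinements of regular subdivisions are regular.

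The gap is in your first paragraph. You claim the given local pulling orders ``may be taken to be the restriction of one fixed total order on $V$ without altering any $T_i$.'' This is not true in general: if $\nabla_i$ and $\nabla_j$ share vertices $v,w$ on a slicing hyperplane and the pulling order producing $T_i$ has $v$ before $w$ while that producing $T_j$ has $w$ before $v$, no global order restricts to both, and swapping the order on either side can genuinely change the induced triangulation of the common face. So nothing guarantees the \emph{given} $T_i$ glue to a polytopal complex at all. The paper avoids this by working the other way round: it fixes a single global pulling refinement of the hyperplane subdivision (citing \cite[Lemma~2.1~(1)]{haase-etal}), which automatically restricts to a pulling triangulation of each $\nabla_i$ in the induced order, and these agree on common faces by construction. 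The hypothesis on minimal non-faces is then to be read as applying to \emph{these} induced pulling triangulations. Your normalization step tries to reach the same situation post hoc from arbitrary local $T_i$, but that reduction is not valid.
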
 
\begin{proof}  
As pulling refinements of regular subdivisions are regular (see \cite[Lemma 2.1 (1)]{haase-etal}), this triangulation of $\nabla$ is regular. 
Let $N$ be a minimal non-face of this triangulation. If $N\subseteq \nabla_i$ for some $i$, then it is a minimal 
non-face of the triangulation of $\nabla_i$ we started with, hence $|N|\le n$. 
If no $\nabla_i$ contains $N$, then there is a hyperplane $H$ among the hyperplanes that yield the dissection of $\nabla$ as the union of the $\nabla_i$, such that $N$ intersects non-trivially both half-spaces with border $H$. So there exist 
$v,w\in N$ such that no $\nabla_i$ contains $\{v,w\}$. 
It follows that $\{v,w\}$ is a $2$-element non-face contained in $N$, 
implying by minimality of $N$ that $N=\{v,w\}$, so $|N|=2$.  

For the last statement recall that any pulling triangulation of a compressed polytope is unimodular by a result of Santos (see \cite{ohsugi-hibi} and 
\cite{sullivant} for the first published proofs).  
\end{proof} 

\begin{proofof}{Theorem~\ref{thm:cubic triangulation}}
%and $\dim(Q,\theta)\le 3$. 
Assume first that $\nabla$ is prime and $3$-dimensional. 
Consider the regular hyperplane subdivision  $\nabla=\nabla_1\cup\dots\cup\nabla_q$ given by Corollary~\ref{cor:3-regular}.  By Proposition~\ref{prop:all 3-dim compressed} and Proposition~\ref{prop:triangulation of 4-cells}, each $\nabla_i$ has a pulling triangulation that is quadratic. 
Thus  by Lemma~\ref{lemma:haase-paffenholz}, $\nabla$ has a quadratic triangulation. 

We pointed out in the paragraph preceding  Proposition~\ref{prop:all compressed} that the prime compressed 
flow polytopes of dimension $1$ or $2$ are unimodular simplices, hence flow polytopes of dimension $\le 2$ have a quadratic triangulation by  similar reasoning as above. 

As products of polytopes having quadratic triangulation also have a quadratic triangulation 
(see \cite[Proposition 2.11]{haase-etal}), we get that 
all flow polytopes of dimension at most $3$ as well as the non-prime flow polytopes of dimension $4$ have a quadratic triangulation. 
\begin{comment}{\red
So (ii) (1) and (2) are proved. 

Assume next that $\nabla$ is prime, $\dim(\nabla)=4$, and take the regular hyperplane subdivision  
$\nabla=\nabla_1\cup\dots\cup\nabla_q$ and the $3$-regular graph $\Gamma$ 
given by Corollary~\ref{cor:3-regular}. 
By Proposition~\ref{prop:4-cells}  and Proposition~\ref{prop:triangulation of 4-cells}, each $\nabla_i$ has a pulling triangulation whose minimal non-faces have at most $3$ elements, and so (i) holds.  }
\end{comment}

Next we treat the case when $\nabla$ is prime and $4$-dimensional. 
By Proposition~\ref{prop:3-regular} $\nabla\cong \nabla(\Gamma^*,\theta)$, where 
$\Gamma\in \{\Gamma_{\mathrm{I}},\Gamma_{\mathrm{II}},\Gamma_{\mathrm{III}},\Gamma_{\mathrm{IV}},\Gamma_{\mathrm{V}}\}$. 
Assume first that $\Gamma\in \{\Gamma_{\mathrm{I}},\Gamma_{\mathrm{II}},\Gamma_{\mathrm{III}},\Gamma_{\mathrm{IV}}\}$. 
By Lemma~\ref{lemma:regular subdivision} $\nabla$ 
 has a regular hyperplane subdivision $\nabla=\nabla_1\cup\dots\cup\nabla_q$ 
where each $\nabla_i$ belongs to  $\mathcal{P}(\Gamma_{\mathrm{I}})$ or 
$\mathcal{P}(\Gamma_{\mathrm{II}})$ or $\mathcal{P}(\Gamma_{\mathrm{III}})$ or $\mathcal{P}(\Gamma_{\mathrm{IV}})$. Therefore by Proposition~\ref{prop:4-cells} and Proposition~\ref{prop:triangulation of 4-cells} each $\nabla_i$ 
has a quadratic pulling triangulation,  
and hence $\nabla$ has a quadratic triangulation by Lemma~\ref{lemma:haase-paffenholz}.

\begin{comment} 
 so $\nabla\cong \nabla(Q,\theta)$, where $\Gamma(Q)$ is prime, $\chi(Q)=4$.  
Assume first that the chassis $\mathcal{C}(\Gamma(Q))$  is not $\Gamma_{\mathrm{V}}$. 
As we pointed out in Section~\ref{sec:graphs}, $\mathcal{C}(\Gamma(Q))$ is a contracted descendant of a prime 
$3$-regular graph, so $\mathcal{C}(\Gamma(Q))$ is a contracted descendant of $\Gamma_{\mathrm{I}}$, 
$\Gamma_{\mathrm{II}}$, $\Gamma_{\mathrm{III}}$, $\Gamma_{\mathrm{IV}}$, $\Gamma_{\mathrm{V}}$. 
It is easy to check (see Figure~\ref{figure:hasse diagram}) that all proper contracted descendants of $\Gamma_{\mathrm{V}}$ are contracted descendants of one of $\Gamma_{\mathrm{I}}$, 
$\Gamma_{\mathrm{II}}$, $\Gamma_{\mathrm{III}}$, $\Gamma_{\mathrm{IV}}$. 
Therefore by our assumption,  $\mathcal{C}(\Gamma(Q))$ is a contracted descendant of one of $\Gamma_{\mathrm{I}}$, 
$\Gamma_{\mathrm{II}}$, $\Gamma_{\mathrm{III}}$, $\Gamma_{\mathrm{IV}}$. 
Thus  
by Proposition~\ref{prop:Gamma^*} 
$\nabla\cong \nabla(\Gamma^*,\sigma)$, where 
$\Gamma\in\{\Gamma_{\mathrm{I}}, 
\Gamma_{\mathrm{II}}, \Gamma_{\mathrm{III}}, \Gamma_{\mathrm{IV}}\}$ and 
$\sigma\in\mathbb{Z}^{\Gamma^*_0}$.  
Consequently, \end{comment}

Finally we turn to the case when $\nabla=\nabla(\Gamma_{\mathrm{V}}^*,\theta)$. 
If $\nabla$ is equivalent to the Birkhoff polytope, then by Proposition~\ref{prop:triangulation of 4-cells} it has a regular unimodular triangulation whose minimal non-faces have 3 elements. 

From now on we assume that the $4$-dimensional polytope $\nabla = \nabla(\Gamma_{\mathrm{V}}^*,\theta)$ is 
not equivalent to the Birkhoff polytope. 
We prove by induction on $\sum |\theta(v)|$ that $\nabla$ has a quadratic triangulation. 
If $\nabla$ is a cell of $(\Gamma_{\mathrm{V}}^*,\theta)$, then $\nabla\in\mathcal{P}(\Gamma_{\mathrm{V}})$, hence 
$\nabla\cong \nabla_{\mathrm{II.c}}^{(4)}$ by Proposition~\ref{prop:4-cells} (v), and has a quadratic triangulation by 
Proposition~\ref{prop:triangulation of 4-cells} (i). 
Otherwise there is an arrow $a$ in $\Gamma_{\mathrm{V}}^*$ such that points of $\nabla$ take at least $3$ distinct integer values on $a$. Without loss of generality we may assume $a = a_{1,1}$ (keeping the notation of Lemma~\ref{lemma:grobner_weights}), and note that $\theta(v_{1,1}) > 1$. If $$k = \min(\{x(a_{1,1}) |\quad x \in \nabla\}) > 0$$ then by Lemma~\ref{lemma:translated_quivers} $\nabla(\Gamma_{\mathrm{V}}^*,\theta)$ is equivalent to $\nabla(\Gamma_{\mathrm{V}}^*,\theta - \omega_{\underline{k}})$ where $\underline{k}$ is defined as $\underline{k}(a_{1,1}) = k$ and 0 on the rest of the arrows. Since $u_1$ is a source and $v_{1,1}$ is a sink in this case we also have $\theta(v_{1,1}) \ge k$ and $\theta(u_{1,1}) \le -k$ and we are done by induction as $\sum |(\theta - \omega_{\underline{k}})(v)| = \sum |\theta(v)| - 2*k$.
Hence we can assume $$\min(\{x(a_{1,1}) |\quad x \in \nabla\}) = 0,$$ 
and subdivide $\nabla$ along the hyperplane $x(a_{1,1}) = 1$. 
We obtain that 
$\nabla = \nabla_1 \cup \nabla_2$ where  
$\nabla_1=\{x\in\nabla\mid x(a_{1,1})\ge 1\}$  and 
$\nabla_2=\{x\in\nabla\mid x(a_{1,1})\le 1\}=\{x\in\nabla\mid x(b_{1,1})\ge 1-\theta(v_{1,1})\}$.
By Lemma~\ref{lemma:translated_quivers} we have 
$\nabla_1 \cong \nabla(\Gamma_{\mathrm{V}}^*,\theta')$ for \begin{align*}\theta'(u_1) = \theta(u_1) + 1\\\theta'(v_{1, 1}) = \theta(v_{1,1}) - 1\\\theta'(v) = \theta(v) \quad \forall v \notin \{u_1, v_{1,1}\},\end{align*}
and $\nabla_2 \cong \nabla(\Gamma_{\mathrm{V}}^*,\theta'')$
for \begin{align*}\theta''(w_1) = \theta(w_1) + \theta(v_{1,1}) - 1\\\theta''(v_{1, 1}) = 1\\\theta'(v) = \theta(v) \quad \forall v \notin \{w_1, v_{1,1}\}.\end{align*}
Note that by our assumption that points in $\nabla$ take at least 3 distinct integer values on $a_{1, 1}$, both $\nabla_1$ and $\nabla_2$ are nonempty. Since $u_1$ and $w_1$ are sources and $v_{1,1}$ is a sink, it follows that all of $\theta$, $\theta'$ and $\theta''$ are non-positive on $u_1$ and $w_1$ and non-negative on $v_{1,1}$. It is then easy to see that $\sum |\theta(v)| > \sum |\theta'(v)|$ and $\sum |\theta(v)| > \sum |\theta''(v)|$, hence the induction hypothesis can be applied. If none of $\nabla_1$ and $\nabla_2$ is equivalent to the Birkhoff polytope then by the induction hypothesis they have quadratic triangulations and by Lemma~\ref{lemma:haase-paffenholz}  so does $\nabla$. Otherwise note that both $\theta'$ and $\theta''$ can be written as $\theta - \omega_{\underline{k}}$ where $\underline{k}$ takes a positive value on precisely one of $a_{1,1}$ and $b_{1,1}$ and is $0$ elsewhere. 
Thus by Lemma~\ref{lemma:grobner_weights} we may assume that $\theta = \theta_{\mathrm{V.b}} + \omega_{\underline{m}} + \omega_{\underline{k}}$ where $\underline{m}(a_{i,j}) = 0$ for all $i, j$. If $\underline{k}(b_{1,1}) > 0$ then $\theta$ itself is of the form given in Lemma~\ref{lemma:grobner_weights}, contradicting the assumption that $\nabla$ is not equivalent to the Birkhoff polytope. If $k = \underline{k}(a_{1,1}) > 0$ then we have that:
\begin{align*}
\theta(u_1) = -1 - k\\  
\theta(u_2) = \theta(u_3) = -1\\ 
\theta(w_i) = -2 - \underline{m}(b_{1,i}) - \underline{m}(b_{2,i}) - \underline{m}(b_{3,i}) \quad\forall i:\quad 1 \leq i \leq 3\\
\theta(v_{1,1}) = 1 + k + \underline{m}(b_{1,1})\\
\theta(v_{i,j}) = 1 + \underline{m}(b_{i,j}) \quad\forall (i,j)\neq (1,1)\\
\end{align*} 
Take a point $x\in \nabla$. 
Since $x(a)\ge 0$ for all  $a\in {(\Gamma_{\mathrm{V}}^*)}_1$, we have $x(a_{i,j})\le -\theta(u_i)$ for all $i,j$, hence  for $(i,j)=(1,1)$  or $i\neq 1$ we have 
\begin{equation}\label{eq:mbij_1} x(b_{i,j})=\theta(v_{i,j})-x(a_{i,j})\ge \theta(v_{i,j})+\theta(u_i)={\underline{m}}(b_{i,j}). 
\end{equation} 
Using the inequality $x(b_{i,j})\le \theta(v_{i,j})$ we get for $j\in\{2,3\}$ that 
\begin{align}\label{eq:mbij_2}
x(b_{1,j})&=-\theta(w_j)-x(b_{2,j})-x(b_{3,j})
\\ \notag &\ge -\theta(w_j)-\theta(v_{2,j})-\theta(v_{3,j})={\underline{m}}(b_{1,j}).
\end{align} 
It follows that 
\begin{align}\label{eq:xa11} 
x(a_{1,1})&=-\theta(u_1)-x(a_{1,2})-x(a_{1,3})
\\ \notag &=k+1-\theta(v_{1,2})+x(b_{1,2})-\theta(v_{1,3})+x(b_{1,3})
\ge k-1.
\end{align}
Applying Lemma~\ref{lemma:translated_quivers} we conclude from \eqref{eq:mbij_1}, \eqref{eq:mbij_2}, \eqref{eq:xa11} that $\nabla \cong \nabla(\Gamma_{\mathrm{V}}^*,\alpha)$ where $\alpha$ is defined as follows:
\begin{align*}
\alpha(u_1) = -2\\
\alpha(u_2) = \alpha(u_3) = -1\\
\alpha(w_j) = -2 \quad\forall j\\
\alpha(v_{1,1}) = 2\\
\alpha(v_{i,j}) = 1 \quad\forall (i,j)\neq (1,1).\\
\end{align*}
Finally it is easy to check that we can contract $b_{1,1}$ and then $a_{1,1}$ in $(\Gamma_{\mathrm{V}}^*,\alpha)$ to obtain $(Q,\sigma)$  with $\nabla\cong \nabla(Q,\sigma)$ and the chassis $\mathcal{C}(\Gamma(Q))$ is the $5$ vertex contracted descendant of $\Gamma_{\mathrm{V}}$, which is also a contracted descendant of $\Gamma_{\mathrm{IV}}$ 
(see Figure~\ref{figure:hasse diagram}). Therefore by Proposition~\ref{prop:Gamma^*}, 
$\nabla=\nabla(\Gamma_{\mathrm{IV}}^*,\beta)$ for some weight $\beta$, and we are done by the first half of the proof. 
\end{proofof}
\begin{remark}
$\nabla(\Gamma_{\mathrm{V}}^*,\alpha)$ in the last part of the proof of Theorem~\ref{thm:cubic triangulation} is a compressed polytope that can not be obtained as a cell of a quiver with chassis $\Gamma_{\mathrm{V}}^*$, hence it did not appear on our lists in Proposition~\ref{prop:4-cells}. Note that there is no contradiction here as $(\Gamma_{\mathrm{V}}^*,\alpha)$ is not tight. In fact  $\nabla(\Gamma_{\mathrm{V}}^*,\alpha)$ is equivalent to the polytope $\nabla_{\mathrm{VI.c}}^{(4)}$ in Section~\ref{sec:remaining 4-cells} and an explicit quadratic triangulation of it will be given in Proposition~\ref{prop:triangulation of new 4-cells}.
\end{remark}

%%%%%%%%%%%%%%%%%%%%%%%%%%

\pgfdeclarelayer{nodelayer}
\pgfdeclarelayer{edgelayer}
\pgfsetlayers{edgelayer,nodelayer,main}

\begin{figure}
\begin{tikzpicture}[scale=0.5]
	\begin{pgfonlayer}{nodelayer}
		\node [style={small_fill}] (0) at (-13, 7) {};
		\node [style={small_fill}] (1) at (-11, 7) {};
		\node [style={small_fill}] (2) at (-13, 5) {};
		\node [style={small_fill}] (3) at (-11, 5) {};
		\node [style={small_fill}] (4) at (-13, 3) {};
		\node [style={small_fill}] (5) at (-11, 3) {};
		\node [style={small_fill}] (6) at (-13, 1) {};
		\node [style={small_fill}] (7) at (-11, 1) {};
		\node [style={small_fill}] (8) at (-12, 0) {};
		\node [style={small_fill}] (9) at (-12, -1) {};
		\node [style={small_fill}] (10) at (-13, -2) {};
		\node [style={small_fill}] (11) at (-11, -2) {};
		\node [style={small_fill}] (12) at (-13, -4) {};
		\node [style={small_fill}] (13) at (-11, -4) {};
		\node [style={small_fill}] (14) at (-11, -4) {};
		\node [style={small_fill}] (15) at (-13, -6) {};
		\node [style={small_fill}] (16) at (-11, -6) {};
		\node [style={small_fill}] (17) at (-13, -8) {};
		\node [style={small_fill}] (18) at (-11, -8) {};
		\node [style={small_fill}] (19) at (-13, -10) {};
		\node [style={small_fill}] (20) at (-11, -10) {};
		\node [style={small_fill}] (21) at (-13, -12) {};
		\node [style={small_fill}] (22) at (-11, -12) {};
		\node [style={small_fill}] (23) at (-13, -14) {};
		\node [style={small_fill}] (24) at (-11, -14) {};
		\node [style={small_fill}] (25) at (-13, -16) {};
		\node [style={small_fill}] (26) at (-13, -18) {};
		\node [style={small_fill}] (27) at (-13, -20) {};
		\node [style={small_fill}] (28) at (-11, -20) {};
		\node [style={small_fill}] (29) at (-11, -18) {};
		\node [style={small_fill}] (30) at (-11, -16) {};
		\node [style={small_fill}] (31) at (-7, 4) {};
		\node [style={small_fill}] (32) at (-5, 4) {};
		\node [style={small_fill}] (33) at (-6, 3) {};
		\node [style={small_fill}] (34) at (-7, 2) {};
		\node [style={small_fill}] (35) at (-5, 2) {};
		\node [style={small_fill}] (36) at (-6, 0) {};
		\node [style={small_fill}] (37) at (-6, -1.25) {};
		\node [style={small_fill}] (38) at (-6, -2.5) {};
		\node [style={small_fill}] (39) at (-8, -3) {};
		\node [style={small_fill}] (40) at (-4, -3) {};
		\node [style={small_fill}] (46) at (-7, -5) {};
		\node [style={small_fill}] (47) at (-5, -5) {};
		\node [style={small_fill}] (48) at (-5, -7) {};
		\node [style={small_fill}] (49) at (-7, -7) {};
		\node [style={small_fill}] (50) at (-7, -9) {};
		\node [style={small_fill}] (51) at (-7, -11) {};
		\node [style={small_fill}] (52) at (-5, -11) {};
		\node [style={small_fill}] (53) at (-5, -14) {};
		\node [style={small_fill}] (54) at (-7, -14) {};
		\node [style={small_fill}] (55) at (-7, -12.5) {};
		\node [style={small_fill}] (56) at (1.25, 7.5) {};
		\node [style={small_fill}] (59) at (-0.75, 4.5) {};
		\node [style={small_fill}] (60) at (3.25, 4.5) {};
		\node [style={small_fill}] (61) at (1.25, 5.75) {};
		\node [style={small_fill}] (62) at (0, 2) {};
		\node [style={small_fill}] (63) at (3, 2) {};
		\node [style={small_fill}] (64) at (0, -1) {};
		\node [style={small_fill}] (65) at (3, -1) {};
		\node [style={small_fill}] (66) at (0, -9) {};
		\node [style={small_fill}] (67) at (3, -9) {};
		\node [style={small_fill}] (68) at (0, -12) {};
		\node [style={small_fill}] (69) at (3, -12) {};
		\node [style={small_fill}] (70) at (0, -3) {};
		\node [style={small_fill}] (71) at (3, -3) {};
		\node [style={small_fill}] (72) at (0, -6) {};
		\node [style={small_fill}] (73) at (3, -6) {};
		\node [style={small_fill}] (74) at (0, -14) {};
		\node [style={small_fill}] (75) at (3, -14) {};
		\node [style={small_fill}] (76) at (0, -17) {};
		\node [style={small_fill}] (77) at (3, -17) {};
		\node [style={small_fill}] (78) at (9.25, 2) {};
		\node [style={small_fill}] (79) at (7.25, -1) {};
		\node [style={small_fill}] (80) at (11.25, -1) {};
		\node [style={small_fill}] (81) at (9.25, -3.5) {};
		\node [style={small_fill}] (82) at (7.25, -6.5) {};
		\node [style={small_fill}] (83) at (11.25, -6.5) {};
		\node [style={small_fill}] (84) at (7.25, -12) {};
		\node [style={small_fill}] (85) at (11.25, -12) {};
		\node [style=none] (86) at (-10, 5) {};
		\node [style=none] (87) at (-7.75, 3) {};
		\node [style=none] (89) at (-8, -1.25) {};
		\node [style=none] (91) at (-10, -6) {};
		\node [style=none] (92) at (-10, -0.5) {};
		\node [style=none] (93) at (-10, -12) {};
		\node [style=none] (94) at (-8, -7) {};
		\node [style=none] (95) at (-10.25, -18) {};
		\node [style=none] (96) at (-8, -12.5) {};
		\node [style=none] (97) at (-4, -12.5) {};
		\node [style=none] (98) at (-1, -15.5) {};
		\node [style=none] (99) at (-1, -10.5) {};
		\node [style=none] (101) at (-1, -4.5) {};
		\node [style=none] (102) at (-4, -6.5) {};
		\node [style=none] (103) at (-4, -1) {};
		\node [style=none] (104) at (-1, 0.5) {};
		\node [style=none] (105) at (-1, 6) {};
		\node [style=none] (106) at (-4, 3) {};
		\node [style=none] (107) at (4, 6) {};
		\node [style=none] (108) at (4, 0.5) {};
		\node [style=none] (109) at (4, -4.5) {};
		\node [style=none] (110) at (4, -10.5) {};
		\node [style=none] (111) at (4, -15.5) {};
		\node [style=none] (112) at (6.25, 0.5) {};
		\node [style=none] (113) at (6.25, -5) {};
		\node [style=none] (115) at (9.25, -7.25) {};
		\node [style=none] (116) at (9.25, -10.75) {};
	\end{pgfonlayer}
	\begin{pgfonlayer}{edgelayer}
		\draw (0) to (1);
		\draw (0) to (3);
		\draw (0) to (5);
		\draw (2) to (1);
		\draw (2) to (3);
		\draw (2) to (5);
		\draw (4) to (5);
		\draw (4) to (1);
		\draw (4) to (3);
		\draw (6) to (8);
		\draw (6) to (7);
		\draw (7) to (8);
		\draw (7) to (11);
		\draw (8) to (9);
		\draw (9) to (10);
		\draw (10) to (11);
		\draw (9) to (11);
		\draw (6) to (10);
		\draw [in=165, out=15] (12) to (14);
		\draw [in=195, out=-15] (12) to (14);
		\draw (12) to (15);
		\draw (15) to (17);
		\draw (16) to (18);
		\draw (14) to (16);
		\draw (15) to (16);
		\draw [in=195, out=-15] (17) to (18);
		\draw [in=165, out=15] (17) to (18);
		\draw [in=195, out=-15] (19) to (20);
		\draw [in=165, out=15] (19) to (20);
		\draw (19) to (21);
		\draw (21) to (24);
		\draw (22) to (23);
		\draw (21) to (23);
		\draw (23) to (24);
		\draw (22) to (24);
		\draw (20) to (22);
		\draw [in=195, out=-15] (25) to (30);
		\draw [in=165, out=15] (25) to (30);
		\draw (25) to (26);
		\draw [in=75, out=-75] (26) to (27);
		\draw [in=105, out=-105] (26) to (27);
		\draw (27) to (28);
		\draw [in=285, out=75] (28) to (29);
		\draw (29) to (30);
		\draw [in=105, out=-105] (29) to (28);
		\draw (31) to (32);
		\draw (31) to (33);
		\draw (32) to (33);
		\draw (33) to (35);
		\draw (33) to (34);
		\draw (31) to (34);
		\draw (32) to (35);
		\draw (34) to (35);
		\draw [in=105, out=-105] (36) to (37);
		\draw [in=75, out=-75] (36) to (37);
		\draw (36) to (40);
		\draw (36) to (39);
		\draw (37) to (38);
		\draw (39) to (38);
		\draw (38) to (40);
		\draw (39) to (40);
		\draw (46) to (49);
		\draw (49) to (48);
		\draw (48) to (50);
		\draw (47) to (48);
		\draw [in=195, out=-15] (46) to (47);
		\draw [in=165, out=15] (46) to (47);
		\draw [in=105, out=-105] (49) to (50);
		\draw [in=75, out=-75] (49) to (50);
		\draw [in=195, out=-15] (51) to (52);
		\draw [in=165, out=15] (51) to (52);
		\draw [in=105, out=-105] (51) to (55);
		\draw [in=75, out=-75] (51) to (55);
		\draw (55) to (54);
		\draw (52) to (53);
		\draw [in=195, out=-15] (54) to (53);
		\draw [in=165, out=15] (54) to (53);
		\draw (56) to (60);
		\draw (56) to (59);
		\draw (59) to (60);
		\draw [in=255, out=105] (61) to (56);
		\draw [in=75, out=-75] (56) to (61);
		\draw (61) to (59);
		\draw (61) to (60);
		\draw [in=195, out=-15] (62) to (63);
		\draw [in=165, out=15] (62) to (63);
		\draw [in=105, out=-105] (62) to (64);
		\draw [in=75, out=-75] (62) to (64);
		\draw (64) to (65);
		\draw (62) to (65);
		\draw (63) to (65);
		\draw [in=195, out=-15] (66) to (67);
		\draw [in=165, out=15] (66) to (67);
		\draw [in=105, out=-105] (66) to (68);
		\draw [in=165, out=15] (68) to (69);
		\draw [in=75, out=-75] (66) to (68);
		\draw [in=195, out=-15] (68) to (69);
		\draw (67) to (69);
		\draw (70) to (72);
		\draw (70) to (73);
		\draw [in=195, out=-15] (72) to (73);
		\draw [in=165, out=15] (70) to (71);
		\draw (71) to (73);
		\draw [in=195, out=-15] (70) to (71);
		\draw [in=165, out=15] (72) to (73);
		\draw (74) to (76);
		\draw [in=195, out=-15] (76) to (77);
		\draw [in=165, out=15] (74) to (75);
		\draw (75) to (77);
		\draw [in=195, out=-15] (74) to (75);
		\draw [in=165, out=15] (76) to (77);
		\draw (76) to (77);
		\draw [in=135, out=-75] (78) to (80);
		\draw [in=75, out=-135] (78) to (79);
		\draw [in=195, out=-15] (79) to (80);
		\draw [in=45, out=-105] (78) to (79);
		\draw [in=105, out=-45] (78) to (80);
		\draw [in=165, out=15] (79) to (80);
		\draw [in=135, out=-75] (81) to (83);
		\draw [in=75, out=-135] (81) to (82);
		\draw [in=45, out=-105] (81) to (82);
		\draw [in=105, out=-45] (81) to (83);
		\draw (82) to (83);
		\draw (81) to (82);
		\draw [in=210, out=-30] (84) to (85);
		\draw [in=195, out=-15] (84) to (85);
		\draw [in=150, out=30] (84) to (85);
		\draw [in=165, out=15] (84) to (85);
		\draw (84) to (85);
		\draw [style=new edge style 2] (86.center) to (87.center);
		\draw [style=new edge style 2] (92.center) to (89.center);
		\draw [style=new edge style 2] (92.center) to (87.center);
		\draw [style=new edge style 2] (93.center) to (94.center);
		\draw [style=new edge style 2] (95.center) to (96.center);
		\draw [style=new edge style 2] (93.center) to (89.center);
		\draw [style=new edge style 2] (106.center) to (105.center);
		\draw [style=new edge style 2] (106.center) to (104.center);
		\draw [style=new edge style 2] (103.center) to (105.center);
		\draw [style=new edge style 2] (103.center) to (104.center);
		\draw [style=new edge style 2] (103.center) to (101.center);
		\draw [style=new edge style 2] (103.center) to (99.center);
		\draw [style=new edge style 2] (102.center) to (104.center);
		\draw [style=new edge style 2] (102.center) to (101.center);
		\draw [style=new edge style 2] (102.center) to (98.center);
		\draw [style=new edge style 2] (97.center) to (99.center);
		\draw [style=new edge style 2] (107.center) to (112.center);
		\draw [style=new edge style 2] (107.center) to (113.center);
		\draw [style=new edge style 2] (108.center) to (113.center);
		\draw [style=new edge style 2] (109.center) to (113.center);
		\draw [style=new edge style 2] (110.center) to (112.center);
		\draw [style=new edge style 2] (111.center) to (113.center);
		\draw [style=new edge style 2] (115.center) to (116.center);
		\draw [style=new edge style 2] (91.center) to (94.center);
		\draw [style=new edge style 2] (93.center) to (96.center);
	\end{pgfonlayer} 

\end{tikzpicture} 
    \centering
    \caption{} \label{figure:hasse diagram}	
\end{figure}

%%%%%%%%%%%%%%%%%%%%%%%%%%

\section{Consequences for toric ideals} \label{sec:toric ideals} 

Recall that given a lattice polytope $\nabla\subset \mathbb{R}^d$ and a field $K$ 
one associates to them  the ideal 
$\mathcal{I}_{\nabla}$ in the multivariate polynomial ring $K[t_z\mid z\in \nabla\cap \mathbb{Z}^d]$ 
(the variables are labeled by the lattice points in $\nabla$) generated by all the binomials 
\[\prod_{z\in \nabla\cap \mathbb{Z}^d}t_z^{\alpha(z)}-\prod_{z\in \nabla\cap \mathbb{Z}^d}t_z^{\beta(z)},\]  
where $\alpha,\beta\in \mathbb{N}_0^{\nabla\cap\mathbb{Z}^d}$ satisfy 
$\sum_{z\in \nabla\cap \mathbb{Z}^d}\alpha(z)z=\sum_{z\in \nabla\cap \mathbb{Z}^d}\beta(z)z\in \mathbb{Z}^d$ 
and $\sum_{z\in \nabla\cap \mathbb{Z}^d}\alpha(z)=\sum_{z\in \nabla\cap \mathbb{Z}^d}\beta(z)\in \mathbb{N}_0$. 
Note that for equivalent lattice polytopes $\nabla$, $\nabla'$ we have $\mathcal{I}_{\nabla}=\mathcal{I}_{\nabla'}$. 
For sake of completeness of the picture, we recall the algebro-geometric relevance of the 
ideals $\mathcal{I}_{\nabla}$. There is a canonical construction of  a projective toric variety 
$X_{\nabla}$ for any lattice polytope $\nabla$, as the toric variety whose fan is the normal fan of $\nabla$ 
(see \cite{cox-little-schenck} for details). When $\nabla$ is normal, the variety $X_{\nabla}$ can be realized as the 
Zariski closure in the projective space $\mathbb{P}(K^{m+1})=\mathbb{P}^{m}$ of the 
subset $\{(x^{z_0}:x^{z_1}:\dots:x^{z_m})\in \mathbb{P}^m\mid x\in (K^{\times})^d\}$, 
where $z_0,\dots,z_m$ are the lattice points in $\nabla$, $K^{\times}=K\setminus\{0\}$, 
and $x^z=x_1^{z(1)}\cdots x_d^{z(d)}$ for $x\in (K^{\times})^d$ and $z\in \nabla\cap\mathbb{Z}^d$. 
Moreover, $\mathcal{I}_{\nabla}$ is the vanishing ideal of this embedded projective variety. 

In dimension at most $4$, 
Theorem~\ref{thm:cubic triangulation} implies a positive answer for Question~\ref{question:cubic Groebner}:  

\begin{corollary} \label{cor:cubic groebner}
Let $\nabla$ be a flow polytope of dimension at most $4$. If $\nabla$ is not equivalent to the Birkhoff polytope then it has a square-free initial ideal generated by quadratic elements 
(in particular, $\mathcal{I}_{\nabla}$ has a Gr\"obner basis consisting of elements of degree $2$). The $4$-dimensional  Birkhoff polytope has a square-free initial ideal generated by monomials of degree $3$. 
\end{corollary}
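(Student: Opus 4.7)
The plan is to deduce this corollary directly from Theorem~\ref{thm:cubic triangulation} by invoking the well-established dictionary between regular unimodular triangulations of a lattice polytope $\nabla$ and square-free initial ideals of $\mathcal{I}_{\nabla}$, due to Sturmfels. Concretely, I would cite \cite{sturmfels}: a regular unimodular triangulation $\Delta$ of $\nabla$ determines a term order (a pulling order or, more generally, the order induced by the lifting function realizing regularity) whose corresponding initial ideal $\mathrm{in}(\mathcal{I}_{\nabla})$ is square-free and coincides with the Stanley--Reisner ideal of $\Delta$, so it is generated by the square-free monomials $\prod_{z\in N}t_z$ where $N$ ranges over the minimal non-faces of $\Delta$.

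First I would handle the non-Birkhoff case. By Theorem~\ref{thm:cubic triangulation}, any flow polytope $\nabla$ with $1\le\dim(\nabla)\le 4$ that is not equivalent to $\nabla^{(4)}_{\mathrm{V.b}}$ admits a regular unimodular triangulation whose minimal non-faces all have exactly two elements. Translated through the Stanley--Reisner correspondence, this gives a square-free initial ideal of $\mathcal{I}_{\nabla}$ generated by quadratic monomials, which by standard Gr\"obner basis theory forces $\mathcal{I}_{\nabla}$ to have a Gr\"obner basis consisting of degree $2$ elements (the reduced Gr\"obner basis has the same number of elements as the minimal monomial generators of the initial ideal and each element has the same degree as its leading monomial). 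For the Birkhoff polytope $\nabla^{(4)}_{\mathrm{V.b}}$, the same theorem provides a regular unimodular triangulation whose minimal non-faces have at most $3$ elements, hence its initial ideal is generated by square-free monomials of degree at most $3$; to see that degree $3$ is actually attained, one just observes that $\mathcal{I}_{\nabla^{(4)}_{\mathrm{V.b}}}$ is not quadratically generated (this is the classical fact underlying the Diaconis--Eriksson conjecture for $B_3$, already recorded in the introduction).

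The main obstacle is essentially nil at this point: all the combinatorial and polytopal work has been absorbed into Theorem~\ref{thm:cubic triangulation}, and what remains is the invocation of Sturmfels' correspondence, together with the observation that equivalence of lattice polytopes preserves $\mathcal{I}_{\nabla}$ (noted in the paragraph preceding the corollary). The only thing worth being careful about is making the non-primality reduction explicit: if $\nabla\cong \nabla_1\times\cdots\times\nabla_r$ is a product of prime flow polytopes, then $\mathcal{I}_{\nabla}$ admits a square-free quadratic initial ideal provided each $\mathcal{I}_{\nabla_i}$ does, since products of quadratic triangulations are quadratic (see \cite[Proposition 2.11]{haase-etal}), and none of the prime factors can be the Birkhoff polytope $\nabla^{(4)}_{\mathrm{V.b}}$ unless $\nabla$ itself is equivalent to it (because $\dim(\nabla)\le 4$ and $\nabla^{(4)}_{\mathrm{V.b}}$ already has dimension $4$). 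Thus the two cases of the corollary cover all flow polytopes of dimension at most $4$.
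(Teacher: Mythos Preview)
Your proposal is correct and follows essentially the same approach as the paper: invoke Theorem~\ref{thm:cubic triangulation} and translate via Sturmfels' correspondence (\cite[Theorem 8.3, Corollary 8.9]{sturmfels}) between regular unimodular triangulations and square-free initial ideals, so that the generators of the initial ideal correspond to the minimal non-faces. Two minor remarks: your paragraph on the non-prime reduction is redundant, since Theorem~\ref{thm:cubic triangulation} already covers \emph{all} flow polytopes of dimension at most $4$ (the product argument is carried out inside its proof); and for the Birkhoff case the paper does not argue separately that degree $3$ is necessary---the explicit pulling triangulation in Proposition~\ref{prop:triangulation of 4-cells}(vii) has a single minimal non-face, of size $3$, so the resulting initial ideal is generated by one cubic monomial.
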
  
\begin{proof} 
By Theorem~\ref{thm:cubic triangulation}  $\nabla$ has a regular unimodular triangulation.  
By \cite[Theorem 8.3]{sturmfels} this triangulation is the initial complex of some initial ideal of $\mathcal{I}_{\nabla}$. Moreover, this  initial ideal is generated by square-free monomials by \cite[Corollary 8.9]{sturmfels}. Therefore  the generators of the initial ideal  correspond to the minimal non-faces of the triangulation.  So our statement is an immediate consequence of Theorem~\ref{thm:cubic triangulation}.  
\end{proof} 

\begin{remark}\label{remark:bogvad} (i) The so-called B\"ogvad conjecture asserts that the toric ideal of a smooth polytope is generated by quadratic polynomials (see for example \cite{bruns}, \cite{balletti}, \cite{higashitani-ohsugi} for partial results on this conjecture). Corollary~\ref{cor:cubic groebner}    
implies that the B\"ogvad conjecture holds for smooth flow polytopes up to dimension $4$. 

(ii) The fact that the toric ideal of a flow polytope of dimension at most four is quadratically generated with the only exception of the Birkhoff polytope was shown before in our preprint \cite{domokos-joo:2}. 
\end{remark}

The logic of the proof of Corollary~\ref{cor:cubic groebner} together with \cite[Proposition 13.15 (ii)]{sturmfels} 
imply that the toric ideal of a $d$-dimensional flow polytope has an initial ideal generated by square-free monomials of degree at most $d+1$. This bound can be slightly improved thanks to the following: 

\begin{lemma}\label{lemma:at most d} 
Let $\nabla$ be a $d$-dimensional lattice polytope all of whose lattice points are vertices of $\nabla$.
Then the  minimal non-faces of a triangulation of $\nabla$ have at most $d$ elements. 
\end{lemma}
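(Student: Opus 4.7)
The plan is to bound $|N|$ for a minimal non-face $N$ of a triangulation $\mathcal{T}$ of $\nabla$. Because every proper subset of $N$ is a simplex of $\mathcal{T}$, any $(|N|-1)$-element subset of $N$ is affinely independent, which forces $|N|\le d+2$ by the usual bound on affinely independent subsets of $\mathbb{R}^d$. The task is then to rule out the cases $|N|=d+2$ and $|N|=d+1$.

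First I would dispose of every instance in which $N$ is itself affinely dependent, which automatically covers $|N|=d+2$ and handles one subcase of $|N|=d+1$. Affine independence of every proper subset of $N$ forces any affine dependence on $N$ to involve all elements with nonzero coefficient, so $N$ is an affine circuit. Splitting $N=A\sqcup B$ according to the signs of the coefficients produces a Radon-type partition with $\mathrm{conv}(A)\cap\mathrm{conv}(B)\ne\emptyset$ and $A,B$ both nonempty. However, $A$ and $B$ are proper subsets of $N$, hence simplices of $\mathcal{T}$, and in a geometric simplicial complex two simplices with disjoint vertex sets have disjoint convex hulls. This contradiction closes these cases.

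The remaining case, $|N|=d+1$ with $N$ affinely independent, is where the hypothesis that every lattice point of $\nabla$ is a vertex becomes essential. Here $\mathrm{conv}(N)$ is a genuine $d$-simplex inside $\nabla$, and its boundary is realised as a subcomplex of $\mathcal{T}$, namely the collection of all proper subsets of $N$. Extremality of the vertices of $\nabla$ forces every lattice point lying in $\mathrm{conv}(N)$ to belong to $N$. The key step is to show that $\mathrm{conv}(N)$ is a union of maximal cells of $\mathcal{T}$: if some maximal cell $\sigma$ had interior meeting $\mathrm{int}(\mathrm{conv}(N))$ but were not contained in $\mathrm{conv}(N)$, then by connectedness of $\mathrm{int}(\sigma)$ there would be a point $q\in\mathrm{int}(\sigma)$ lying on some $(d-1)$-dimensional facet $F_i$ of $\mathrm{conv}(N)$, which is itself a simplex of $\mathcal{T}$. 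The carrier of $q$ in $\mathcal{T}$ would then have to equal $\sigma$ (because $q\in\mathrm{int}(\sigma)$) and simultaneously be contained in $F_i$, which is impossible for dimension reasons. Once $\mathrm{conv}(N)$ is known to be a union of maximal cells, each such cell has its $d+1$ vertices among the $d+1$ elements of $N$, hence coincides with $\mathrm{conv}(N)$, and we conclude that $N$ is a face of $\mathcal{T}$, contradicting the non-face assumption.

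The main obstacle is the point-set topological step in the last paragraph, namely verifying rigorously that a convex $d$-dimensional region of $\nabla$ whose boundary is a subcomplex of $\mathcal{T}$ is saturated by the maximal cells of $\mathcal{T}$. Everything else reduces to routine applications of Radon's theorem, extremality of the vertices of $\nabla$, and a dimension count.
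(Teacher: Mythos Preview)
Your proposal is correct and follows the same overall strategy as the paper: first eliminate affinely dependent minimal non-faces (the paper defers this to \cite[Proposition~13.15~(ii)]{sturmfels}, whereas you spell out the Radon argument explicitly), then handle the remaining case $|N|=d+1$ with $N$ affinely independent by exploiting that the facets of $\mathrm{conv}(N)$ are simplices of $\mathcal{T}$ and that the extremality hypothesis forbids extra lattice points inside $\mathrm{conv}(N)$.

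The execution of the final step differs slightly. The paper picks a single $d$-simplex $S$ of $\mathcal{T}$ containing an interior point of $P=\mathrm{conv}(N)$, uses the extremality hypothesis \emph{first} to locate a vertex $w$ of $S$ outside $P$, and then draws a segment from $w$ to an interior point of $S\cap P$; this segment hits a facet $F$ of $P$ in its relative interior, forcing $F$ to be the facet of $S$ opposite $w$, which is impossible. You instead argue globally that $\mathrm{conv}(N)$ is saturated by maximal cells (via your carrier argument: a point in $\mathrm{int}(\sigma)\cap\partial P$ would have carrier $\sigma$ of dimension $d$ and simultaneously carrier contained in a $(d-1)$-face $F_i$), and only \emph{then} invoke extremality to pin down the vertex set of each such cell. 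Both routes hinge on the same incidence observation; your carrier formulation is arguably cleaner and sidesteps the paper's line-segment bookkeeping, while the paper's version makes the topological step you flag as the ``main obstacle'' essentially a one-line convexity argument. Either way the connectedness step you worry about is routine: if $\sigma\not\subseteq\mathrm{conv}(N)$ then some point of $\mathrm{int}(\sigma)$ lies outside the closed set $\mathrm{conv}(N)$, and connectedness of $\mathrm{int}(\sigma)$ forces it to meet $\partial\mathrm{conv}(N)$.
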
 
\begin{proof} 
The proof of \cite[Proposition 13.15 (ii)]{sturmfels} shows that the elements in a minimal non-face can not be affinely dependent. In particular a minimal non-face can not have more than $d+1$ elements.   
It remains to exclude the case that $v_1,\dots, v_{d+1}$ are affinely independent and form a minimal non-face of the triangulation. 
So $P = \mathrm{Conv}(v_1,\dots, v_{d+1})$ is a simplex of dimension $d$. Let $x$ be a point in its interior. Then $x$ is contained in some simplex  $S$ of the triangulation. Clearly $S\neq P$, since  $P$ is not a face of the triangulation. Therefore $S$ has a vertex $w\notin \{v_1,\dots,v_{d+1}\}$. It follows that $w \notin P$, since all the lattice points in 
$\nabla$ are vertices of $\nabla$ by assumption, thus the convex hull of lattice points can not contain another lattice point. 
Since $S$ intersects $P$ in an interior point, $S \cap P$ contains an open sphere, hence we can pick $x' \in S \cap P$ such that $x'$ is an interior point of both $P$ and $S$ and that the line segment from $w$ to $x'$ intersects a facet $F$ of $P$ in an interior point of $F$. Recall that  $\{v_1,\dots,v_{d+1}\}$ is a minimal non-face, hence $F$ is a face of the triangulation.  Since $S$ intersects $P$ in an interior point we have that $F$ is a face (and thus a facet) of $S$. This is a contradiction, since the line segment between the vertex $w$ of the simplex $S$ and the interior point $x' \in S$ can not meet the facet $F$ of $S$ opposite to $w$.
\end{proof} 

\begin{proposition}\label{prop:general}
The toric ideal $\mathcal{I}_{\nabla}$ of any $d$-dimensional flow 
polytope $\nabla$ has an initial ideal generated by square-free monomials of degree at most $d$. 
\end{proposition}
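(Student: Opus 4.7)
The plan is to mimic the proof of Corollary~\ref{cor:cubic groebner}, but to replace the dimension-specific construction of quadratic triangulations by the general size bound on minimal non-faces supplied by Lemma~\ref{lemma:at most d}. The case $d=1$ is essentially trivial, so I shall assume $d\ge 2$.

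First I would write $\nabla\cong\nabla(Q,\theta)$ with $\dim(\nabla(Q,\theta))=d$, and invoke the regular hyperplane subdivision \eqref{eq:subdivision} to decompose $\nabla(Q,\theta)$ into $d$-dimensional cells $\nabla_1,\dots,\nabla_q$. As recalled in the paragraph preceding Proposition~\ref{prop:all compressed}, each $\nabla_i$ is a compressed flow polytope. The key extra observation needed is that each cell is (equivalent to) a $0$--$1$ polytope: by construction $\nabla_i=\nabla(Q,\theta,\underline{k}^{(i)},\underline{k}^{(i)}+\underline{1})$, which after translation by $-\underline{k}^{(i)}$ is cut out by the inequalities $0\le x(a)\le 1$ (this is also implicit in the proof of Proposition~\ref{prop:all compressed}). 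Consequently, every lattice point of $\nabla_i$ is a $0$--$1$ vector and hence a vertex of $\nabla_i$.

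Next I would pick any pulling triangulation of each cell $\nabla_i$; by Santos' theorem (recalled in the proof of Lemma~\ref{lemma:haase-paffenholz}), these are unimodular because each $\nabla_i$ is compressed. Since the lattice points of $\nabla_i$ coincide with its vertices, Lemma~\ref{lemma:at most d} applies and bounds the size of the minimal non-faces of each such triangulation by $\dim(\nabla_i)=d$. Gluing via Lemma~\ref{lemma:haase-paffenholz} then yields a regular unimodular triangulation $\Delta$ of $\nabla$ whose minimal non-faces have at most $\max\{d,2\}=d$ elements.

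Exactly as in the final step of the proof of Corollary~\ref{cor:cubic groebner}, \cite[Theorem~8.3]{sturmfels} and \cite[Corollary~8.9]{sturmfels} translate this into the claim: $\mathcal{I}_{\nabla}$ has a square-free monomial initial ideal whose generators correspond to the minimal non-faces of $\Delta$ and thus have degree at most $d$. I do not anticipate a serious obstacle here: every substantive ingredient (the hyperplane subdivision into compressed cells, unimodularity of pulling triangulations of compressed polytopes, the non-face size bound of Lemma~\ref{lemma:at most d}, and the passage to initial ideals via Sturmfels' correspondence) is already available in the paper, and the argument is simply their assembly; the only tiny verification is that the cells really are $0$--$1$ polytopes so that Lemma~\ref{lemma:at most d} can be invoked.
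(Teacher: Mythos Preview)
Your proposal is correct and follows essentially the same route as the paper's own proof: refine the regular hyperplane subdivision into compressed cells by pulling triangulations, invoke Lemma~\ref{lemma:at most d} on each cell (since its lattice points are vertices), glue via Lemma~\ref{lemma:haase-paffenholz}, and translate to an initial ideal via Sturmfels. The only cosmetic difference is that the paper justifies ``lattice points are vertices'' directly from compressedness, whereas you argue it via the $0$--$1$ description of the translated cell; both are fine.
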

\begin{proof} 
Taking a pulling triangulation of the cells of the regular hyperplane subdivision of $\nabla$ discussed in Section~\ref{sec:regular subdivision}  we get a regular unimodular triangulation of $\nabla$ 
by Lemma~\ref{lemma:haase-paffenholz}.  Therefore by \cite[Theorem 8.3]{sturmfels} and \cite[Corollary 8.9]{sturmfels} $\mathcal{I}_{\nabla}$  has an initial ideal generated by square-free monomials corresponding to the minimal non-faces of the triangulation. The cells of the above regular hyperplane subdivision of $\nabla$ are compressed, hence all lattice points in a cell are verices of the cell. Therefore by Lemma~\ref{lemma:at most d} and by  Lemma~\ref{lemma:haase-paffenholz} the minimal non-faces of our regular unimodular triangulation of $\nabla$ have size at most $\max\{d,2\}$.  
\end{proof}

\section{The remaining compressed $4$-dimensional flow polytopes} \label{sec:remaining 4-cells} 

In this section we complete the classification of the $4$-dimensional prime compressed flow polytopes. 
In principle this could be done in the same way as in the $3$-dimensional case in Section~\ref{sec:class-compressed-3-dim}, but significantly more graphs and weights must be analyzed. The amount of computations needed can be  drastically decreased thanks to Lemma~\ref{lemma:reduction34} below. 

\begin{lemma}\label{lemma:reduction34}
Let $v,w_1$ be valency $3$ vertices in $\Gamma\in\mathcal{L}_d$ connected by two edges. 
The third edge $c$ adjacent to $w_1$ connects it to $w_2$, and suppose that $w_2 \neq v$. 
Denote by $\Gamma'$ the graph obtained by contracting the edge $c$ in $\Gamma$ (so $\chi(\Gamma')=d)$; write $w$ for 
the vertex of $\Gamma'$ arising from $w_1,w_2$ glued together. Let $\theta'\in \mathbb{Z}^{(\Gamma')^*_0}$ be a  weight with  $\theta'(v)\in\{-1,-2\}$ and $\theta'(s)=1$ for all source vertices of $(\Gamma')^*$. Define 
$\theta\in \mathbb{Z}^{\Gamma^*_0}$ as follows: 
\begin{align*} \theta(u)&=\theta'(u)\text{ for }  u\in \Gamma^*_0\setminus \{w_1,w_2,v_c\}=(\Gamma')^*_0\setminus\{w\} 
\\ &\text{ and } 
\begin{cases} \theta(w_1)=-2, \ \theta(w_2)=\theta'(w)+1 & \text{ if } \theta'(v)=-1\\
\theta(w_1)=-1,\  \theta (w_2)=\theta'(w) &\text{ if }\theta'(v)=-2\end{cases}.
\end{align*} 
\begin{itemize} 
\item[(i)] Then $\nabla((\Gamma')^*,\theta')\cong \nabla(\Gamma^*,\theta)$. 
\item[(ii)] Moreover,  if $\theta'\in \mathcal{W}(\Gamma')$ and $\dim(\nabla((\Gamma')^*,\theta')=d$, then 
$\theta\in \mathcal{W}(\Gamma)$. 
\end{itemize}
\end{lemma}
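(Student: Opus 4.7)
My plan is to prove part (i) by constructing an explicit affine isomorphism between the two ambient affine spaces that restricts to the desired equivalence of lattice polytopes, and then in part (ii) to verify the three defining properties of $\mathcal{W}(\Gamma)$, with the main effort going into the valency bound at $w_2$. First I would label the arrows in $\Gamma^*$ adjacent to $\{v, w_1, w_2\}$: let $\alpha_1, \alpha_2$ be the two arrows from $v$ to $v_{e_1}, v_{e_2}$ (where $e_1, e_2$ are the two edges between $v$ and $w_1$); let $\beta_1, \beta_2$ be the corresponding arrows from $w_1$; let $\delta_1: w_1 \to v_c$ and $\delta_2: w_2 \to v_c$ be the two arrows into the new sink; and let $\gamma_1, \dots, \gamma_k$, with $k = \mathrm{valency}_\Gamma(w_2) - 1 \ge 2$, be the remaining arrows at $w_2$. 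All arrows other than $\delta_1, \delta_2$ are in canonical bijection with the arrows of $(\Gamma')^*$, whereas $\delta_1, \delta_2$ (and the vertex $v_c$, for which I adopt the convention $\theta(v_c)=1$) are extra in $\Gamma^*$. I would define $\varphi: y \mapsto x$ by copying all common coordinates and setting $x(\delta_1) = 1$, $x(\delta_2) = 0$.

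The core computation proving $\varphi$ is an equivalence is the following: adding the sink equations $x(\alpha_i) + x(\beta_i) = 1$ to conservation at $v$ gives $x(\beta_1) + x(\beta_2) = 2 + \theta'(v) \in \{1,0\}$, whence conservation at $w_1$ forces $x(\delta_1) = -\theta(w_1) - (2+\theta'(v)) = 1$ in both cases $\theta'(v) \in \{-1,-2\}$, and then $x(\delta_2) = 0$ from the sink constraint at $v_c$. Conservation at $w_2$ in $\Gamma^*$ reduces to conservation at $w$ in $(\Gamma')^*$ after subtracting the contribution of $\beta_1, \beta_2$. Running this computation in reverse shows that these same values are forced for every $x \in \nabla(\Gamma^*, \theta)$, so $\varphi$ is a bijection; it is clearly an affine map carrying lattice points to lattice points, yielding part (i).

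For part (ii), the conditions $\theta(v_e) = 1$ for all $e \in \Gamma_1$ are built in, and the sum constraint $\sum_{u \in \Gamma_0}\theta(u) = -|\Gamma_1|$ reduces to a short arithmetic check using $|\Gamma_1| = |\Gamma'_1| + 1$, the case-defined values of $\theta(w_1) + \theta(w_2)$ in terms of $\theta'(v), \theta'(w)$, and the sum constraint for $\theta' \in \mathcal{W}(\Gamma')$. The bounds $0 > \theta(u) > -\mathrm{valency}_\Gamma(u)$ are immediate for $u \notin \{v, w_1, w_2\}$ (same weight and same valency as in $\Gamma'$) and for $u \in \{v, w_1\}$ (valency $3$ with value in $\{-1,-2\}$).

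The main obstacle is the valency bound at $w_2$, where $\mathrm{valency}_\Gamma(w_2) = \mathrm{valency}_{\Gamma'}(w) - 1$ and $\theta' \in \mathcal{W}(\Gamma')$ alone does not suffice; for example in Case A the value $\theta'(w) = -1$ would give $\theta(w_2) = 0$. Here I would exploit $\dim \nabla((\Gamma')^*, \theta') = d$. Substituting $y(\beta_1) + y(\beta_2) = 2 + \theta'(v)$ into conservation at $w$ yields the constraint $\sum_{j=1}^k y(\gamma_j) = -\theta'(w) - 2 - \theta'(v)$ on the remaining arrows. If this number is $0$, every $y(\gamma_j)$ is forced to $0$, so each $\gamma_j$ is removable; by Proposition~\ref{prop:tightness} (iii) the polytope is then equivalent to one on a quiver with $k$ fewer arrows, which remains connected (each target sink keeps its other incoming arrow), whence $\chi$ drops by $k$ and the dimension is at most $d-k < d$, a contradiction. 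Symmetrically, if the number equals $k$ every $y(\gamma_j) = 1$, so the complementary arrows at the relevant sinks are removable, giving the same contradiction; a value exceeding $k$ would make the polytope empty. Excluding these extremes in each of the cases $\theta'(v) = -1, -2$ leaves exactly the integer ranges for $\theta'(w)$ that translate to $0 > \theta(w_2) > -\mathrm{valency}_\Gamma(w_2)$. The nuisance is not the ideas but the bookkeeping between the arrow sets of $\Gamma^*$ and $(\Gamma')^*$ and tracking how $k$ relates to the two valencies; once that correspondence is fixed, the remainder is a mechanical case analysis.
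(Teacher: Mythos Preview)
Your argument contains a computational error that invalidates both parts as written. The vertex $v$ has valency $3$ in $\Gamma$, so in $\Gamma^*$ there are \emph{three} arrows leaving $v$: your $\alpha_1,\alpha_2$ together with a third arrow $\alpha_3$ to the sink on the remaining edge at $v$. Conservation at $v$ therefore reads $x(\alpha_1)+x(\alpha_2)+x(\alpha_3)=-\theta'(v)$, and combining with the two sink equations yields $x(\beta_1)+x(\beta_2)=2+\theta'(v)+x(\alpha_3)$, not the constant $2+\theta'(v)$. Consequently $x(\delta_1)$ is not identically $1$; for instance when $\theta'(v)=-1$ one gets $x(\delta_1)=x(\alpha_1)+x(\alpha_2)$ and $x(\delta_2)=x(\alpha_3)$. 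Your map $\varphi$ with $x(\delta_1)=1$, $x(\delta_2)=0$ thus fails to land in $\mathcal{A}(\Gamma^*,\theta)$ (conservation at $w_1$ breaks), so (i) does not follow. The same oversight breaks (ii): you assert that $\sum_j y(\gamma_j)=-\theta'(w)-2-\theta'(v)$ is a fixed number, but in fact $\sum_j y(\gamma_j)=-\theta'(w)-y(\beta_1)-y(\beta_2)$ varies with $y$, so your removability argument for the $\gamma_j$ does not apply.

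The strategy is salvageable once $\alpha_3$ is taken into account: with the correct (non-constant) formulas for $x(\delta_1),x(\delta_2)$ one checks that both are automatically non-negative on the polytope, which is exactly the statement that the two arrows into $v_c$ are contractible. This is precisely how the paper argues (i): it exhibits the coordinates explicitly, observes that both $c_1,c_2$ are contractible for $(\Gamma^*,\theta)$, and invokes Proposition~\ref{prop:tightness}~(iv). For (ii) the paper sidesteps the difficulty you ran into by working on the $\Gamma^*$ side rather than the $(\Gamma')^*$ side: if $\theta(w_2)$ violates the valency bound then, since $w_2$ is a source in $\Gamma^*$ adjacent only to sinks of weight $1$, one sees directly that $\nabla(\Gamma^*,\theta)$ is empty or of dimension $<d$, and (i) transfers this to $\nabla((\Gamma')^*,\theta')$.
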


\begin{proof} (i) Let us denote by $c_1$ and $c_2$ the arrows of $\Gamma^*$ adjacent to the sink that was put on the edge $c$ of $\Gamma$. 
The value of $x\in \mathcal{A}(\Gamma^*,\theta)$ on two arrows starting at $v$ and ending at the sinks that were put on the two edges between $v$ and $w_1$ determines 
its value on the other arrows of $\Gamma^*$ adjacent to $v$, $w_1$, or the sinks that were put on the edges of 
$\Gamma$ adjacent to $v$ or $w_1$. This is shown explicitly in the figure below in the case $\theta(v)=-1$: 

\begin{tikzpicture}[>=latex]; %[scale=0.45] 
\foreach \x in {(0,0),(2,1),(4,0),(2,-1),(4,-1.5),(4,-3),(0,-1.5),(0,-3)} \filldraw \x circle (1pt); 
\draw [->, thick] (0,0) to (2,1); \draw [->, thick] (4,0) to (2,1);\draw [->, thick] (0,0) to (2,-1); \draw [->, thick] (4,0) to (2,-1);
\draw [->, thick] (0,0) to (0,-1.5); \draw [->, thick] (0,-3) to (0,-1.5); \draw [->, thick] (4,0) to (4,-1.5); \draw [->, thick] (4,-3) to (4,-1.5);
\node [left] at (0,-0.75) {\text{\tiny$1-x-y$}}; \node [left] at (0,-2.25) {\text{\tiny$x+y$}}; \node [left] at (1.1,0.6) {\text{\tiny$x$}};
\node [left] at (1.1,-0.25) {\text{\tiny$y$}}; \node [right] at (2.9,0.6) {\text{\tiny$1-x$}};
\node [left] at (3.1,-0.25) {\text{\tiny$1-y$}}; \node [right] at (4,-0.75) {\text{\tiny$x+y$}}; \node [right] at (4,-2.25) {\text{\tiny$1-x-y$}};
\node [left] at (0,0) {$-1$}; \node [above] at (2,1) {$1$}; \node [below] at (2,-1) {$1$}; \node [right] at (4,0) {$-2$}; 
\node [left] at (0,-1.5) {$1$}; \node [right] at (4,-1.5) {$1$}; \node [right] at (4,-3) {$\theta(w_2)$}; 
\end{tikzpicture}

One can read off from the above figure that both of $c_1$ and $c_2$ are contractible for $\theta$, and contracting any of them, 
the other is contractible for the resulting weight; contracting it as well we end up with the pair $((\Gamma')^*,\theta')$.  
Consequently, $\nabla(\Gamma^*,\theta)\cong \nabla((\Gamma')^*,\theta')$ 
by Proposition~\ref{prop:tightness} (iv).  
The case $\theta(v)=-2$ is handled similarly (in fact we can use the same figure as above, except that the vertex corresponding to $w_2$ will be the left bottom vertex).

(ii) If $\theta' \in \mathcal{W}(\Gamma')$, then the weight $\theta$ satisfies all requirements  from Definition~\ref{def:W(Gamma)} for a weight to belong to $\mathcal{W}(\Gamma)$ except possibly one of the required inequalities for $\theta(w_2)$. Suppose that  $\theta(w_2)\ge 0$ or $\theta(w_2)\le -\mathrm{valency}_{\Gamma}(w_2)$. 
Since $w_2$ is a source vertex connected only to sinks in $\Gamma^*$, and $\theta$ takes value $1$ on these sinks, 
we conclude that in this case $\nabla(\Gamma^*,\theta)$ is empty or has dimension strictly smaller than 
$\chi(\Gamma^*)=\chi(\Gamma)=d$. By (i) we conclude that $\dim(\nabla((\Gamma')^*,\theta')<d$ as well, so (ii) holds. 
\end{proof} 

\begin{corollary} \label{cor:refined all compressed} 
For $d\ge 2$ any prime compressed $d$-dimensional flow polytope belongs to $\mathcal{P}(\Gamma)$ 
for some $\Gamma\in \mathcal{L}_d$ containing no double edge connecting 
a valency $3$ vertex with a valency $\ge 4$ vertex. 
\end{corollary}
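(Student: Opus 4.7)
By Proposition~\ref{prop:all compressed}, $\nabla\in\mathcal{P}(\Gamma_0)$ for some $\Gamma_0\in\mathcal{L}_d$, so the task is to modify $\Gamma_0$ within the equivalence class of $\nabla$ until no forbidden double edges remain. My plan is to induct on the non-negative integer
\[\epsilon(\Gamma):=\sum_{u\in\Gamma_0}\bigl(\mathrm{valency}_\Gamma(u)-3\bigr),\]
with the inductive step being the reverse of Lemma~\ref{lemma:reduction34}. If $\Gamma_0$ has no bad double edge there is nothing to prove; otherwise pick a double edge between a valency $3$ vertex $v$ and a valency $\ge 4$ vertex $w$, and split $w$ into two new vertices $w_1,w_2$ joined by a fresh edge $c$, with $w_1$ inheriting only the two selected edges to $v$ and $w_2$ inheriting all remaining edges of $w$. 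The resulting graph $\Gamma$ satisfies $\Gamma/c=\Gamma_0$, $\mathrm{valency}_\Gamma(w_1)=3$, and $\mathrm{valency}_\Gamma(w_2)=\mathrm{valency}_{\Gamma_0}(w)-1\ge 3$; hence $\chi(\Gamma)=\chi(\Gamma_0)=d$ and $\epsilon(\Gamma)=\epsilon(\Gamma_0)-1$.

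Since $v$ has valency $3$, any $\theta_0\in\mathcal{W}(\Gamma_0)$ satisfies $\theta_0(v)\in\{-1,-2\}$, so Lemma~\ref{lemma:reduction34}(i) yields a weight $\theta\in\mathbb{Z}^{\Gamma^*_0}$ with $\nabla\cong\nabla(\Gamma_0^*,\theta_0)\cong\nabla(\Gamma^*,\theta)$, and part~(ii) combined with $\dim\nabla=d$ upgrades it to $\theta\in\mathcal{W}(\Gamma)$. Proposition~\ref{prop:P(Gamma)} then gives $\nabla\in\mathcal{P}(\Gamma)$, reducing the inductive step to showing that $\Gamma$ belongs to $\mathcal{L}_d$.

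The valency requirement is built into the construction, so the remaining content is primality, and this is where I expect the main obstacle. For any vertex $u\notin\{v,w_1,w_2\}$ the identity $(\Gamma\setminus\{u\})/c=\Gamma_0\setminus\{u\}$, together with the fact that edge contraction preserves the number of connected components, reduces the question to primality of $\Gamma_0$; the same argument handles $u=v$. Removing $w_2$ leaves $\Gamma_0\setminus\{w\}$ together with $w_1$ joined to $v$ by the double edge, so connectedness again follows from primality of $\Gamma_0$. The delicate case is $u=w_1$: then $\Gamma\setminus\{w_1\}$ is $\Gamma_0$ with the two chosen edges between $v$ and $w$ deleted, and one must invoke primality of $\Gamma_0$ to produce a $v$-avoiding path from $w$ to the endpoint of $v$'s third edge in $\Gamma_0$; such a path survives the edge deletion and restores global connectedness. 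Once $\Gamma\in\mathcal{L}_d$ is in place the induction closes, and iterating exhausts all bad double edges.
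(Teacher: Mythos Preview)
Your argument is correct and follows essentially the same route as the paper: start from Proposition~\ref{prop:all compressed}, and whenever a forbidden double edge is present, split the high-valency endpoint as in Lemma~\ref{lemma:reduction34} to move the polytope into $\mathcal{P}(\Gamma)$ for a graph with smaller complexity, then iterate. Your presentation is in fact more complete than the paper's---you spell out the primality verification for the split graph (which the paper only gestures at via Section~\ref{sec:graphs}) and your induction parameter $\epsilon(\Gamma)$ is cleaner than counting bad double edges; one organizational nit is that Lemma~\ref{lemma:reduction34} and the definitions of $\mathcal{W}(\Gamma)$, $\mathcal{P}(\Gamma)$ presuppose $\Gamma\in\mathcal{L}_d$, so strictly you should establish $\Gamma\in\mathcal{L}_d$ \emph{before} invoking them rather than after.
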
 

\begin{proof} 
By Proposition~\ref{prop:all compressed}, any $d$-dimensional compressed prime flow polytope occurs in $\mathcal{P}(\Gamma')$ for  some $\Gamma'\in \mathcal{L}_d$.  
If $\Gamma'$ has a double edge connecting a valency $3$ vertex $v$ with a valency $\ge 4$ vertex $w$, then 
$\Gamma$ is a contracted descendant of another graph $\Gamma\in  \mathcal{L}_d$ as in Lemma~\ref{lemma:reduction34} (see the last paragraph of Section~\ref{sec:graphs} for more details). Now $\Gamma$ has  
one less instances of a double edge that connects a valency $3$ vertex with a valency $\ge 4$ vertex, and  
by Lemma~\ref{lemma:reduction34},  we have 
$\mathcal{P}(\Gamma)\subseteq \mathcal{P}(\Gamma')$. 
Successively applying this reduction we get the desired statement.  
 \end{proof} 

Inspection of  Figure~\ref{figure:hasse diagram} shows that there are nine graphs in $\mathcal{L}_d$ 
having no double edge that connects a valency $3$ vertex with a valency $\ge 4$ vertex, 
namely the graphs $\Gamma_{\mathrm{I}}, \Gamma_{\mathrm{II}}, \Gamma_{\mathrm{III}}, \Gamma_{\mathrm{IV}}, \Gamma_{\mathrm{V}}$ defined in Section~\ref{sec:4-cells} and the graphs 
$\Gamma_{\mathrm{VI}}, \Gamma_{\mathrm{VII}}, \Gamma_{\mathrm{VIII}}, \Gamma_{\mathrm{IX}}$ 
defined below. 

For these latter four graphs $\Gamma$ we can compute the cells of $(\Gamma^*,\theta)$ for all $\theta\in \mathcal{W}(\Gamma)$ 
in the same way as in Section~\ref{sec:4-cells}. 
First we list the graphs and indicate the relevant weights (we give the value of the weight on the vertices 
belonging to $\Gamma_0$; for the vertices in $\Gamma^*_0\setminus\Gamma_0$ the value is $1$): 

\begin{tikzpicture}[scale=0.45]  
\node at (-1.25,2) {VI.a};
\foreach \x in {(0,0),(3,0),(1.5,1.5),(0,3),(3,3)} \filldraw \x circle (2pt); 
\draw (0,0)--(0,3)--(3,3)--(3,0)--(0,0); \draw (0,0)--(1.5,1.5)--(3,0);\draw (0,3)--(1.5,1.5)--(3,3);
\node [left] at (0,0) {\text{\tiny$-2$}}; 
\node [left] at (0,3) {\text{\tiny$-2$}}; 
\node[right] at (3,0)  {\text{\tiny$-1$}};
\node[right] at (3,3)  {\text{\tiny$-2$}};
\node[below] at (1.5,1.5)  {\text{\tiny$-1$}};
\end{tikzpicture}
\begin{tikzpicture}[scale=0.45]  
\node at (-1.25,2) {VI.b};
\foreach \x in {(0,0),(3,0),(1.5,1.5),(0,3),(3,3)} \filldraw \x circle (2pt); 
\draw (0,0)--(0,3)--(3,3)--(3,0)--(0,0); \draw (0,0)--(1.5,1.5)--(3,0);\draw (0,3)--(1.5,1.5)--(3,3);
\node [left] at (0,0) {\text{\tiny$-2$}}; 
\node [left] at (0,3) {\text{\tiny$-2$}}; 
\node[right] at (3,0)  {\text{\tiny$-1$}};
\node[right] at (3,3)  {\text{\tiny$-1$}};
\node[below] at (1.5,1.5)  {\text{\tiny$-2$}};
\end{tikzpicture}
\begin{tikzpicture}[scale=0.45]  
\node at (-1.25,2) {VI.c};
\foreach \x in {(0,0),(3,0),(1.5,1.5),(0,3),(3,3)} \filldraw \x circle (2pt); 
\draw (0,0)--(0,3)--(3,3)--(3,0)--(0,0); \draw (0,0)--(1.5,1.5)--(3,0);\draw (0,3)--(1.5,1.5)--(3,3);
\node [left] at (0,0) {\text{\tiny$-1$}}; 
\node [left] at (0,3) {\text{\tiny$-2$}}; 
\node[right] at (3,0)  {\text{\tiny$-2$}};
\node[right] at (3,3)  {\text{\tiny$-1$}};
\node[below] at (1.5,1.5)  {\text{\tiny$-2$}};
\end{tikzpicture}
\begin{tikzpicture}[scale=0.45]  
\node at (-1.25,2) {VI.d};
\foreach \x in {(0,0),(3,0),(1.5,1.5),(0,3),(3,3)} \filldraw \x circle (2pt); 
\draw (0,0)--(0,3)--(3,3)--(3,0)--(0,0); \draw (0,0)--(1.5,1.5)--(3,0);\draw (0,3)--(1.5,1.5)--(3,3);
\node [left] at (0,0) {\text{\tiny$-2$}}; 
\node [left] at (0,3) {\text{\tiny$-1$}}; 
\node[right] at (3,0)  {\text{\tiny$-1$}};
\node[right] at (3,3)  {\text{\tiny$-1$}};
\node[below] at (1.5,1.5)  {\text{\tiny$-3$}};
\end{tikzpicture} 

\begin{tikzpicture}[scale=0.45]  
\node at (-1.25,2) {VII.a};
\foreach \x in {(0,0),(4,0),(2,1.1),(2,3)} \filldraw \x circle (2pt); 
\draw (0,0)--(2,3)--(4,0)--(2,1.1)--(0,0)--(4,0); 
\draw (2,1.1) to [out=120,in=-120] (2,3); \draw (2,1.1) to [out=60,in=-60] (2,3);
\node [left] at (0,0) {\text{\tiny$-2$}}; 
\node[right] at (4,0)  {\text{\tiny$-2$}};
\node[left] at (2,3)  {\text{\tiny$-1$}};
\node[below] at (2,1.1)  {\text{\tiny$-2$}};
\end{tikzpicture}
\begin{tikzpicture}[scale=0.45]  
\node at (-1.25,2) {VII.b};
\foreach \x in {(0,0),(4,0),(2,1.1),(2,3)} \filldraw \x circle (2pt); 
\draw (0,0)--(2,3)--(4,0)--(2,1.1)--(0,0)--(4,0); 
\draw (2,1.1) to [out=120,in=-120] (2,3); \draw (2,1.1) to [out=60,in=-60] (2,3);
\node [left] at (0,0) {\text{\tiny$-2$}}; 
\node[right] at (4,0)  {\text{\tiny$-1$}};
\node[left] at (2,3)  {\text{\tiny$-2$}};
\node[below] at (2,1.1)  {\text{\tiny$-2$}};
\end{tikzpicture}
\begin{tikzpicture}[scale=0.45]  
\node at (-1.25,2) {VII.c};
\foreach \x in {(0,0),(4,0),(2,1.1),(2,3)} \filldraw \x circle (2pt); 
\draw (0,0)--(2,3)--(4,0)--(2,1.1)--(0,0)--(4,0); 
\draw (2,1.1) to [out=120,in=-120] (2,3); \draw (2,1.1) to [out=60,in=-60] (2,3);
\node [left] at (0,0) {\text{\tiny$-2$}}; 
\node[right] at (4,0)  {\text{\tiny$-1$}};
\node[left] at (2,3)  {\text{\tiny$-1$}};
\node[below] at (2,1.1)  {\text{\tiny$-3$}};
\end{tikzpicture}
\begin{tikzpicture}[scale=0.45]  
\node at (-1.25,2) {VII.d};
\foreach \x in {(0,0),(4,0),(2,1.1),(2,3)} \filldraw \x circle (2pt); 
\draw (0,0)--(2,3)--(4,0)--(2,1.1)--(0,0)--(4,0); 
\draw (2,1.1) to [out=120,in=-120] (2,3); \draw (2,1.1) to [out=60,in=-60] (2,3);
\node [left] at (0,0) {\text{\tiny$-1$}}; 
\node[right] at (4,0)  {\text{\tiny$-1$}};
\node[left] at (2,3)  {\text{\tiny$-2$}};
\node[below] at (2,1.1)  {\text{\tiny$-3$}};
\end{tikzpicture}

\begin{tikzpicture}[scale=0.45]  
\node at (-1.25,2) {VIII.a};
\foreach \x in {(0,0),(4,0),(2,3)} \filldraw \x circle (2pt); 
\draw (0,0) to [out=30,in=150] (4,0); \draw (0,0) to [out=-30,in=-150] (4,0); 
\draw (0,0) to [out=80,in=-170] (2,3); \draw (0,0) to [out=30,in=-100] (2,3); 
\draw (4,0) to [out=150,in=-80] (2,3); \draw (4,0) to [out=100,in=-10] (2,3); 
\node [left] at (0,0) {\text{\tiny$-2$}}; 
\node[right] at (4,0)  {\text{\tiny$-2$}};
\node[above] at (2,3)  {\text{\tiny$-2$}};
\end{tikzpicture}
\begin{tikzpicture}[scale=0.45]  
\node at (-1.25,2) {VIII.b};
\foreach \x in {(0,0),(4,0),(2,3)} \filldraw \x circle (2pt); 
\draw (0,0) to [out=30,in=150] (4,0); \draw (0,0) to [out=-30,in=-150] (4,0); 
\draw (0,0) to [out=80,in=-170] (2,3); \draw (0,0) to [out=30,in=-100] (2,3); 
\draw (4,0) to [out=150,in=-80] (2,3); \draw (4,0) to [out=100,in=-10] (2,3); 
\node [left] at (0,0) {\text{\tiny$-3$}}; 
\node[right] at (4,0)  {\text{\tiny$-2$}};
\node[above] at (2,3)  {\text{\tiny$-1$}};
\end{tikzpicture}

\begin{tikzpicture}[scale=0.45]  
\node at (-1.25,2) {IX.a};
\foreach \x in {(0,0),(4,0)} \filldraw \x circle (2pt); 
\draw (0,0)--(4,0); 
\draw (0,0) to [out=70,in=110] (4,0); \draw (0,0) to [out=30,in=150] (4,0); 
\draw (0,0) to [out=-70,in=-110] (4,0); \draw (0,0) to [out=-30,in=-150] (4,0); 
\node [left] at (0,0) {\text{\tiny$-2$}}; 
\node[right] at (4,0)  {\text{\tiny$-3$}};
\end{tikzpicture}
\begin{tikzpicture}[scale=0.45]  
\node at (-1.25,2) {IX.b};
\foreach \x in {(0,0),(4,0)} \filldraw \x circle (2pt); 
\draw (0,0)--(4,0); 
\draw (0,0) to [out=70,in=110] (4,0); \draw (0,0) to [out=30,in=150] (4,0); 
\draw (0,0) to [out=-70,in=-110] (4,0); \draw (0,0) to [out=-30,in=-150] (4,0); 
\node [left] at (0,0) {\text{\tiny$-1$}}; 
\node[right] at (4,0)  {\text{\tiny$-4$}};
\end{tikzpicture}

\begin{tikzpicture}[scale=0.45]
\foreach \x in {(0,0),(0,3),(3,0),(3,3)} \filldraw \x circle (2pt); 
\draw (0,0)--(0,3); \draw (3,0)--(3,3);  
\node [left] at (2.5,1.6) {X.a};  
\node [left] at (0,0) {$-2$};
\node [left] at (0,3) {$-2$};
\node [right] at (3,3) {$-1$};
\node [right] at (3,0) {$-2$};
\draw (0,0)--(3,0);
\draw  [-]  (0,0) to [out=45,in=135] (3,0);
\draw  [-]  (0,0) to [out=-45,in=-135] (3,0);
\draw  [-]  (0,3) to [out=30,in=150] (3,3);
\draw  [-]  (0,3) to [out=-30,in=-150] (3,3);
\end{tikzpicture}
\begin{tikzpicture}[scale=0.45]
\foreach \x in {(0,0),(0,3),(3,0),(3,3)} \filldraw \x circle (2pt); 
\draw (0,0)--(0,3); \draw (3,0)--(3,3);  
\node [left] at (2.5,1.6) {X.b};  
\node [left] at (0,0) {$-2$};
\node [left] at (0,3) {$-2$};
\node [right] at (3,3) {$-2$};
\node [right] at (3,0) {$-1$};
\draw (0,0)--(3,0);
\draw  [-]  (0,0) to [out=45,in=135] (3,0);
\draw  [-]  (0,0) to [out=-45,in=-135] (3,0);
\draw  [-]  (0,3) to [out=30,in=150] (3,3);
\draw  [-]  (0,3) to [out=-30,in=-150] (3,3);
\end{tikzpicture}
\begin{tikzpicture}[scale=0.45]
\foreach \x in {(0,0),(0,3),(3,0),(3,3)} \filldraw \x circle (2pt); 
\draw (0,0)--(0,3); \draw (3,0)--(3,3);  
\node [left] at (2.5,1.6) {X.c};  
\node [left] at (0,0) {$-3$};
\node [left] at (0,3) {$-1$};
\node [right] at (3,3) {$-1$};
\node [right] at (3,0) {$-2$};
\draw (0,0)--(3,0);
\draw  [-]  (0,0) to [out=45,in=135] (3,0);
\draw  [-]  (0,0) to [out=-45,in=-135] (3,0);
\draw  [-]  (0,3) to [out=30,in=150] (3,3);
\draw  [-]  (0,3) to [out=-30,in=-150] (3,3);
\end{tikzpicture}

\begin{tikzpicture}[scale=0.45]
\foreach \x in {(0,0),(0,3),(3,0),(3,3)} \filldraw \x circle (2pt); 
\draw (0,0)--(0,3); \draw (3,0)--(3,3);  
\node [left] at (2.5,1.6) {X.d};  
\node [left] at (0,0) {$-3$};
\node [left] at (0,3) {$-2$};
\node [right] at (3,3) {$-1$};
\node [right] at (3,0) {$-1$};
\draw (0,0)--(3,0);
\draw  [-]  (0,0) to [out=45,in=135] (3,0);
\draw  [-]  (0,0) to [out=-45,in=-135] (3,0);
\draw  [-]  (0,3) to [out=30,in=150] (3,3);
\draw  [-]  (0,3) to [out=-30,in=-150] (3,3);
\end{tikzpicture}
\begin{tikzpicture}[scale=0.45]
\foreach \x in {(0,0),(0,3),(3,0),(3,3)} \filldraw \x circle (2pt); 
\draw (0,0)--(0,3); \draw (3,0)--(3,3);  
\node [left] at (2.5,1.6) {X.e};  
\node [left] at (0,0) {$-3$};
\node [left] at (0,3) {$-1$};
\node [right] at (3,3) {$-2$};
\node [right] at (3,0) {$-1$};
\draw (0,0)--(3,0);
\draw  [-]  (0,0) to [out=45,in=135] (3,0);
\draw  [-]  (0,0) to [out=-45,in=-135] (3,0);
\draw  [-]  (0,3) to [out=30,in=150] (3,3);
\draw  [-]  (0,3) to [out=-30,in=-150] (3,3);
\end{tikzpicture}

We obtain the following $4$-dimensional cells: 

\begin{tikzpicture}[>=latex]
\foreach \x in {(0,0),(1.5,1.5),(3,0),(0,3),(3,3)} \filldraw \x circle (1pt); 
\node [left] at (-1,2.5) {$\nabla_{\mathrm{VI.c}}^{(4)}$};  
\draw  [->, thick]  (0,0) to  (3,0); \node [below] at (1.5,0) {\text{\tiny$1-z-w$}};
\draw  [->, thick]  (0,0) to (0,3); \node [left] at (0,1.5) {\text{\tiny$w$}}; 
\draw [->, thick] (0,0) to (1.5,1.5); \node [left] at (0.75,0.75) {\text{\tiny$z$}}; 
\draw  [->, thick]  (1.5,1.5) to (3,0); \node at (2.25,0.75) {\text{\tiny$z+w-x$}};
\draw  [->, thick]  (1.5,1.5) to (0,3); \node at (0.75,2.25)   {\text{\tiny$x+y-w$}}; 
\draw  [->, thick]  (3,3) to (0,3); \node [above] at (1.5,3) {\text{\tiny$1-x-y$}};
\draw  [->, thick]  (3,3) to (1.5,1.5); \node [right] at (2.25,2.25) {\text{\tiny$y$}};
\draw  [->, thick]  (3,3) to (3,0); \node [right] at (3,1.5) {\text{\tiny$x$}};
\node [left] at (0,0) {\text{\tiny{$-1$}}}; \node [left] at (0,3) {\text{\tiny{$1$}}}; \node [right] at (3,0) {\text{\tiny{$1$}}};
\node [right] at (3,3) {\text{\tiny{$-1$}}}; \node [left] at (1.5,1.5) {\text{\tiny{$0$}}}; 
\node [right] at (5,1.5) {$\begin{array}{c|c|c|c|c|c|c}
v_0 & v_1 & v_2 & v_3 & v_4 & v_5 & v_6\\
\hline 
0&0&1&1&0&0&0\\ 0&0&0&0&1&1&1\\ 0&1&1&0&0&1&0 \\ 0&0&0&1&0&0&1
\end{array}$}; 
\end{tikzpicture}

\begin{tikzpicture}[>=latex]
\foreach \x in {(0,0),(3,0),(3,1.5),(3,-1.5),(1.5,0.75),(1.5,-0.75)} \filldraw \x circle (1pt); 
\node [left] at (-1,1) {$\nabla_{\mathrm{VII.b}}^{(4)}$};  
\draw  [->, thick]  (0,0) to  [out= 85,in=-175] (3,1.5); \node at (1.5,1.5) {\text{\tiny$1-z-w$}};
\draw  [->, thick]  (3,-1.5) to [out=175,in=-85] (0,0); \node at (1.5,-1.4) {\text{\tiny$x+y-z-w$}}; 
\draw [->, thick] (3,-1.5) to (3,0); \node at (3.1,-0.75) {\text{\tiny$1-x-y+z$}}; 
\draw  [->, thick]  (3,0) to (3,1.5); \node  at (3.1,0.75) {\text{\tiny$z$}};
\draw  [->, thick]  (3,-1.5) to [out=10,in=-10] (3,1.5); \node at (4.1,0)   {\text{\tiny$w$}}; 
\draw  [->, thick]  (0,0) to (1.5,0.75); \node  at (0.75,0.5) {\text{\tiny$x$}};
\draw  [->, thick]  (0,0) to (1.5,-0.75); \node  at (0.75,-0.275) {\text{\tiny$y$}};
\draw  [->, thick]  (3,0) to (1.5,-0.75); \node  at (2.25,-0.375) {\text{\tiny$1-y$}};
\draw  [->, thick]  (3,0) to (1.5,0.75); \node at (2.25,0.375) {\text{\tiny$1-x$}};
\node [left] at (0,0) {\text{\tiny{$-1$}}}; \node [above] at (3,1.5) {\text{\tiny{$1$}}}; 
\node [below] at (3,-1.5) {\text{\tiny{$-1$}}};
\node [below] at (1.5,0.75) {\text{\tiny{$1$}}}; \node [above] at (1.5,-0.75) {\text{\tiny{$1$}}}; 
\node [right] at (3,0) {\text{\tiny{$-1$}}}; 
\node [right] at (5,0.5) {$\begin{array}{c|c|c|c|c|c|c|c}
v_0 & v_1 & v_2 & v_3 & v_4 & v_5 & v_6& v_7\\
\hline 
0&1&0&1&0&1&1&0\\ 0&0&1&0&1&1&0&1\\ 0&0&0&1&1&1&0&0 \\ 0&0&0&0&0&0&1&1
\end{array}$}; 
\end{tikzpicture}

\begin{tikzpicture}[>=latex]
\foreach \x in {(0,0),(4,0),(2,3),(0,2),(1,1.5),(3,1.5),(4,2),(2,0),(2,-1)} \filldraw \x circle (1pt); 
\node [left] at (-1,1) {$\nabla_{\mathrm{VIII.a}}^{(4)}$};  
\draw  [->, thick]  (0,0) to   (2,0); \node [above] at (1,0) {\text{\tiny$1-y$}};
\draw  [->, thick]  (0,0) to  (2,-1); \node at (1,-0.5) {\text{\tiny$1+y-z-w$}}; 
\draw [->, thick] (4,0) to (2,0); \node [above] at (3,0) {\text{\tiny$y$}}; 
\draw  [->, thick]  (4,0) to (2,-1); \node  at (3,-0.5) {\text{\tiny$z+w-y$}};
\draw  [->, thick]  (0,0) to  (1,1.5); \node [right] at (0,0.75)   {\text{\tiny$w$}}; 
\draw  [->, thick]  (0,0) to (0,2); \node  [left] at (0,1) {\text{\tiny$z$}};
\draw  [->, thick]  (2,3) to (1,1.5); \node  at (1.5,2.25) {\text{\tiny$1-w$}};
\draw  [->, thick]  (2,3) to (0,2); \node  at (1,2.5) {\text{\tiny$1-z$}};
\draw  [->, thick]  (2,3) to (3,1.5); \node at (2.5,2.25) {\text{\tiny$x$}};
\draw  [->, thick]  (2,3) to (4,2); \node at (3,2.5) {\text{\tiny$z+w-x$}};
\draw  [->, thick]  (4,0) to (3,1.5);\node at (3.5,0.75) {\text{\tiny$1-x$}};
\draw  [->, thick]  (4,0) to (4,2); \node at (4,1) {\text{\tiny$1+x-z-w$}};
\node [left] at (0,0) {\text{\tiny{$-2$}}}; \node [above] at (2,3) {\text{\tiny{$-2$}}}; 
\node [right] at (4,0) {\text{\tiny{$-2$}}};
\node [right] at (1,1.5) {\text{\tiny{$1$}}}; \node [left] at (0,2) {\text{\tiny{$1$}}}; 
\node [above] at (2,0) {\text{\tiny{$1$}}}; \node [below] at (2,-1) {\text{\tiny{$1$}}}; 
\node [left] at (3,1.5) {\text{\tiny{$1$}}}; \node [right] at (4,2) {\text{\tiny{$1$}}}; 
\node [right] at (5,0.5) {$\begin{array}{c|c|c|c|c|c|c|c|c|c}
v_0 & v_1 & v_2 & v_3 & v_4 & v_5 & v_6& v_7&v_8&v_9\\
\hline 
0&0&1&0&0&1&0&1&1&1\\ 0&0&0&1&0&0&1&1&1&1\\ 0&1&1&1&0&0&0&1&1&0 \\ 0&0&0&0&1&1&1&1&0&1
\end{array}$}; 
\end{tikzpicture}

\begin{tikzpicture}[>=latex]
\foreach \x in {(0,0),(4,0),(2,0),(2,1),(2,2),(2,-1),(2,-2)} \filldraw \x circle (1pt); 
\node [left] at (-1,1) {$\nabla_{\mathrm{IX.a}}^{(4)}$};  
\draw  [->, thick]  (0,0) to   (2,0); \node at (1,0.15) {\text{\tiny$y$}};
\draw  [->, thick]  (0,0) to  (2,-1); \node at (1,-0.4) {\text{\tiny$x$}}; 
\draw [->, thick] (0,0) to (2,-2); \node at (0.5,-1) {\text{\tiny$z+w-x-y$}}; 
\draw  [->, thick]  (0,0) to (2,1); \node  at (1.1,0.65) {\text{\tiny$1-w$}};
\draw  [->, thick]  (0,0) to  (2,2); \node at (0.9,1)   {\text{\tiny$1-z$}}; 
\draw  [->, thick]  (4,0) to (2,0); \node at (3,0.15) {\text{\tiny$1-y$}};
\draw  [->, thick]  (4,0) to (2,1); \node  at (3,0.6) {\text{\tiny$w$}};
\draw  [->, thick]  (4,0) to (2,2); \node  at (3,1.1) {\text{\tiny$z$}};
\draw  [->, thick]  (4,0) to (2,-1);\node at (3,-0.4) {\text{\tiny$1-x$}};
\draw  [->, thick]  (4,0) to (2,-2); \node at (3.5,-1) {\text{\tiny$1+x+y-z-w$}};
\node [left] at (0,0) {\text{\tiny{$-2$}}}; \node [right] at (4,0) {\text{\tiny{$-3$}}}; 
\node [above] at (2,0) {\text{\tiny{$1$}}}; \node [above] at (2,1) {\text{\tiny{$1$}}}; \node [above] at (2,2) {\text{\tiny{$1$}}};
\node [above] at (2,-1) {\text{\tiny{$1$}}}; \node [above] at (2,-2) {\text{\tiny{$1$}}};
\node [right] at (5,0.5) {$\begin{array}{c|c|c|c|c|c|c|c|c|c}
v_0 & v_1 & v_2 & v_3 & v_4 & v_5 & v_6& v_7&v_8&v_9\\
\hline 
0&0&1&0&0&1&0&1&0&1\\ 0&0&0&1&0&0&1&0&1&1\\ 0&0&0&0&1&1&1&1&1&1 \\ 0&1&1&1&0&0&0&1&1&1
\end{array}$}; 
\end{tikzpicture}

\begin{tikzpicture}[>=latex]
\foreach \x in {(0,0),(4,0),(2,3),(2,0),(2,-1),(2,1)} \filldraw \x circle (1pt); 
\node [left] at (0.5,3) {$\nabla_{\mathrm{X.a}}^{(4)}$};  
\draw  [->, thick]  (0,0) to  (2,-1); \node at (0.8,-0.6) {\text{\tiny$1-z$}}; 
\draw  [->, thick]  (0,0) to   (2,0); \node [above] at (1,-0.1) {\text{\tiny$1-y$}};
\draw  [->, thick]  (0,0) to  (2,1); \node at (1,0.7) {\text{\tiny$x$}}; 
\draw [->, thick] (4,0) to (2,0); \node [above] at (3,-0.1) {\text{\tiny$y$}}; 
\draw  [->, thick]  (4,0) to (2,-1); \node  at (3,-0.7) {\text{\tiny$z$}};
\draw  [->, thick]  (4,0) to (2,1); \node  at (3,0.7) {\text{\tiny$1-x$}};
\draw  [->, thick]  (2,3) to (4,0); \node  at (3,1.65) {\text{\tiny$y+z-x$}};
\draw  [->, thick]  (2,3) to  (0,0); \node [right] at (1,1.5)   {\text{\tiny$w$}}; 
\draw  [->, thick]  (2,3) to [out=180,in=90]  (0,0); \node  [left] at (0.8,1.5) {\text{\tiny$1+x-y-z-w$}};
\node [left] at (0,0) {\text{\tiny{$-1$}}}; \node [above] at (2,3) {\text{\tiny{$-1$}}}; 
\node [right] at (4,0) {\text{\tiny{$-1$}}};
\node [above] at (2,0) {\text{\tiny{$1$}}}; \node [below] at (2,-1) {\text{\tiny{$1$}}}; 
\node [above] at (2,1) {\text{\tiny{$1$}}};
\node [right] at (4.5,1.5) {$\begin{array}{c|c|c|c|c|c|c|c|c}
v_0 & v_1 & v_2 & v_3 & v_4 & v_5 & v_6& v_7&v_8\\
\hline 
0&0&0&0&1&1&1&1&1\\ 0&1&0&0&1&1&0&0&1\\ 0&0&1&0&0&0&1&1&1 \\ 0&0&0&1&0&1&0&1&0
\end{array}$}; 
\end{tikzpicture}

In fact we get 
\begin{itemize} 
\item[] $\nabla_{\mathrm{II.c}}^{(4)}$ from 
$\mathrm{VI.a}$, $\mathrm{VI.d}$, 
$\mathrm{VII.c}$, $\mathrm{IX.b}$, $\mathrm{X.d}$; \item[] $\nabla_{\mathrm{II.f}}^{(4)}$ from 
$\mathrm{X.e}$; 
\item[] $\nabla_{\mathrm{I.d}}^{(4)}$ from 
$\mathrm{VI.b}$, $\mathrm{VII.a}$, 
$\mathrm{VII.d}$, $\mathrm{VIII.b}$;
\item[] $\nabla_{\mathrm{VI.c}}^{(4)}$ from 
$\mathrm{VI.c}$;
\item[] $\nabla_{\mathrm{VII.b}}^{(4)}$ from 
$\mathrm{VII.b}$; 
\item[] $\nabla_{\mathrm{VIII.a}}^{(4)}$ from 
$\mathrm{VIII.a}$; 
\item[] $\nabla_{\mathrm{IX.a}}^{(4)}$ from 
$\mathrm{IX.a}$
\item[] $\nabla_{\mathrm{X.a}}^{(4)}$ from 
$\mathrm{X.a}$.  
\end{itemize} 
We obtain at most $3$-dimensional polytopes from 
$\mathrm{X.b}$ and $\mathrm{X.c}$. 
In particular, we have the following: 

\begin{proposition}
\begin{itemize}
\item[(i)] $\mathcal{P}(\Gamma_{\mathrm{VI}})=\{\nabla_{\mathrm{II.c}}^{(4)},\ \nabla_{\mathrm{I.d}}^{(4)}, \ \nabla_{\mathrm{VI.c}}^{(4)}\}$ 
\item[(ii)] $\mathcal{P}(\Gamma_{\mathrm{VII}})=\{\nabla_{\mathrm{II.c}}^{(4)},\ \nabla_{\mathrm{I.d}}^{(4)}, \ \nabla_{\mathrm{VII.b}}^{(4)}\}$ 
\item[(iii)]  $\mathcal{P}(\Gamma_{\mathrm{VIII}})=\{\nabla_{\mathrm{I.d}}^{(4)}, \ \nabla_{\mathrm{VIII.a}}^{(4)}\}$ 
\item[(iv)] $\mathcal{P}(\Gamma_{\mathrm{IX}})=\{\nabla_{\mathrm{II.c}}^{(4)}, \ \nabla_{\mathrm{IX.a}}^{(4)}\}$ 
\item[(iv)] $\mathcal{P}(\Gamma_{\mathrm{X}})=
\{\nabla_{\mathrm{II.c}}^{(4)},\nabla_{\mathrm{II.f}}^{(4)}, \ \nabla_{\mathrm{X.a}}^{(4)}\}$ 
\end{itemize}
\end{proposition}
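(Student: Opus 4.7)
The plan is to follow exactly the strategy used in the proof of Proposition~\ref{prop:4-cells}, applied now to each graph $\Gamma\in\{\Gamma_{\mathrm{VI}},\Gamma_{\mathrm{VII}},\Gamma_{\mathrm{VIII}},\Gamma_{\mathrm{IX}},\Gamma_{\mathrm{X}}\}$ together with the associated set of weights $\mathcal{W}(\Gamma)$. By Proposition~\ref{prop:P(Gamma)}, $\mathcal{P}(\Gamma)$ is obtained by running through all $\theta\in \mathcal{W}(\Gamma)$ for which $\dim(\nabla(\Gamma^*,\theta))=4$, and taking the resulting polytopes up to equivalence. The relevant weights are precisely those labeled $\mathrm{VI.a}$--$\mathrm{X.e}$ already listed in Section~\ref{sec:remaining 4-cells}.

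First I would tighten each pair $(\Gamma^*,\theta)$: repeatedly apply Lemma~\ref{lemma:reduction} to eliminate the valency $2$ sinks whose two incoming arrows originate at distinct sources, choose a spanning tree in the resulting quiver $Q$, and use the $\chi(Q)=4$ free coordinates $x,y,z,w$ corresponding to the arrows outside the tree to parametrize $\mathcal{A}(Q,\sigma)$. Each arrow $a\in Q_1$ then carries an explicit linear polynomial $L_a(x,y,z,w)$, and the polytope is cut out by the system $L_a\ge 0$. Inspection of this system lets us detect any redundant inequalities (contractible arrows) and identify the cases in which the polytope is contained in a proper affine hyperplane of $\mathbb{R}^4$ (in which case the corresponding pair does not contribute to $\mathcal{P}(\Gamma)$).

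After contracting all contractible arrows we end up with a tight pair $(Q',\sigma')$ and a $0$--$1$ lattice polytope inside $\mathbb{R}^4$, whose lattice points can be enumerated directly from the inequalities $L_a\ge 0$. The pair (number of facets, number of lattice points) — recalling that tightness means the number of facets equals $|Q'_1|$ by Proposition~\ref{prop:tightness}~(i) — is then used as an invariant to match the outcome against the polytopes $\nabla_{\mathrm{II.c}}^{(4)}$, $\nabla_{\mathrm{I.d}}^{(4)}$, $\nabla_{\mathrm{II.f}}^{(4)}$ already produced in Proposition~\ref{prop:4-cells}, and against the four new polytopes $\nabla_{\mathrm{VI.c}}^{(4)}$, $\nabla_{\mathrm{VII.b}}^{(4)}$, $\nabla_{\mathrm{VIII.a}}^{(4)}$, $\nabla_{\mathrm{IX.a}}^{(4)}$, $\nabla_{\mathrm{X.a}}^{(4)}$ drawn in this section, whose defining inequalities and lattice points are displayed explicitly. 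When this invariant does not separate two candidates (which can occur for some of the new polytopes versus some of the older $4$-cells), a direct comparison of the defining inequality systems suffices.

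The execution is mostly mechanical; the main obstacle is bookkeeping, namely verifying on each of the many weights $\mathrm{VI.a}$--$\mathrm{X.e}$ that our tightening lands in the claimed equivalence class, and in particular confirming the assertion that $\mathrm{X.b}$ and $\mathrm{X.c}$ produce polytopes of dimension $\le 3$ (these are the pairs where the system $L_a\ge 0$ forces a nontrivial affine relation). Since the details are entirely analogous to the worked example in the proof of Proposition~\ref{prop:all 3-dim compressed} (see the analysis of the pair labeled II.c there) and to the case analysis for $\mathcal{L}_4^{3\text{-reg}}$ in Proposition~\ref{prop:4-cells}, we omit the routine computations and present only the resulting distribution of tight pairs among the equivalence classes, as summarized in the bulleted list preceding the statement. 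The four equivalences (i)--(v) then follow by collecting, for each graph $\Gamma$, those outcomes whose origin appears in that graph's row of the list.
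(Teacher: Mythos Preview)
Your proposal is correct and follows essentially the same approach as the paper: the paper also treats this proposition as a direct case analysis, stating that one computes the cells of $(\Gamma^*,\theta)$ for all $\theta\in\mathcal{W}(\Gamma)$ ``in the same way as in Section~\ref{sec:4-cells}'' and then records the resulting bulleted distribution of weights among equivalence classes (including the observation that $\mathrm{X.b}$ and $\mathrm{X.c}$ drop below dimension $4$). Your write-up simply spells out the tightening procedure of Proposition~\ref{prop:4-cells} in slightly more detail, so there is nothing substantively different to compare.
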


As in Proposition~\ref{prop:triangulation of 4-cells}, 
one can easily construct pulling triangulations of the four compressed flow polytopes appearing in this section. 
In Proposition~\ref{prop:triangulation of new 4-cells} below we use the notation  introduced in the proof of 
 Proposition~\ref{prop:triangulation of 4-cells}.

\begin{proposition}\label{prop:triangulation of new 4-cells} 
\begin{itemize}%{\blue 
\item[(i)] Pulling at $v_5$ yields the triangulation of $\nabla_{\mathrm{VI.c}}^{(4)}$ with $4$-dimensional cells 
\[\langle 0,1,2,3,5\rangle,\  \langle 0,3,4,5,6 \rangle 
,\  \langle 0,1,3,5,6 \rangle ,\  \langle 0,2,3,4,5 \rangle \] 
and minimal  non-faces 
\[\{v_1,v_4\},\  \{v_2,v_6\}. \]  %}
\item[(ii)] Pulling at $v_0$, $v_4$ and then at $v_7$ yields the triangulation of $\nabla_{\mathrm{VII.b}}^{(4)}$ 
with $4$-dimensional cells 
\begin{align*}\langle 0, 1, 3, 5, 6\rangle,\ \langle 0, 3, 4, 5, 6\rangle,\ \langle 0, 4, 5, 6, 7\rangle,
\\ \langle 0, 1, 5, 6, 7\rangle, \ \langle 0, 1, 2, 5, 7\rangle,
\ \langle 0, 2, 4, 5, 7\rangle\end{align*} 
and  minimal  non-faces are 
\[\{v_2,v_6\}, \ \{v_3,v_7\}, \ \{v_1,v_4\}, \ \{v_2,v_3\}.\] 
\item[(iii)] Pulling at $v_0$, $v_2$ and then at $v_6$ yields the triangulation of $\nabla_{\mathrm{VIII.a}}^{(4)}$ with 
$4$-dimensional cells 
\begin{align*}\langle 0,2,7,8,9 \rangle,\  \langle 0,2,5,7,9 \rangle ,\  
\langle 0,2,3,7,8 \rangle ,\  \langle 0,1,2,3,7 \rangle,  
\\   \langle 0,2,4,5,7 \rangle ,\  \langle 0,1,2,4,7 \rangle ,\   \langle 0,6,7,8,9 \rangle ,\   \langle 0,3,6,7,8 \rangle,  
\\  \langle 0,5,6,7,9 \rangle ,\   \langle 0,4,5,6,7 \rangle ,\   \langle 0,1,3,6,7 \rangle ,\  
   \langle 0,1,4,6,7 \rangle 
\end{align*}
and minimal  non-faces 
\begin{align*}\{v_1,v_5\},\ \{v_1,v_8\},\ \{v_1,v_9\},\ \{v_2,v_6\}, 
\{v_3,v_4\},
\\ \{v_3,v_5\},\ \{v_3,v_9\},\ \{v_4,v_8\},\ \{v_4,v_9\},\ \{v_5,v_8\}.
\end{align*}   
\item[(iv)] Pulling at $v_0$, $v_1$ and then at $v_6$ yields the triangulation of $\nabla_{\mathrm{IX.a}}^{(4)}$ 
with $4$-dimensional cells 
\begin{align*}\nabla_{\mathrm{IX.a}}^{(4)}=\langle 0, 2, 5, 7, 9 \rangle ,\  \langle 0, 3, 6, 8, 9 \rangle ,\ 
\langle 0, 1, 4, 7, 8 \rangle ,\  \langle 0, 1, 2, 7, 9 \rangle,  
\\   \langle 0, 1, 7, 8, 9  \rangle ,\  
 \langle 0, 1, 2, 3, 9 \rangle ,\   \langle 0, 1, 3, 8, 9\rangle ,\   \langle 0, 5, 6, 7, 9 \rangle,   
 \\   \langle 0, 6, 7, 8, 9 \rangle ,\   \langle 0, 4, 5, 6, 7 \rangle ,\   \langle 0, 4, 6, 7, 8 \rangle   
 \end{align*}  
and minimal  non-faces 
\begin{align*}\{v_4,v_9\},\ \{v_3,v_7\},\ \{v_1,v_6\},\ \{v_3,v_4\},\  
\{v_3,v_5\},\ \{v_5,v_8\},\\ 
\{v_2,v_8\},\ \{v_2,v_6\},\ \{v_1,v_5\},\ \{v_2,v_4\}.\end{align*}  
\item[(v)] Pulling at $v_0$, $v_6$ and then at $v_3$ yields the triangulation of $\nabla_{\mathrm{X.a}}^{(4)}$ 
with $4$-dimensional cells 
\begin{align*}\nabla_{\mathrm{X.a}}^{(4)}=\langle 0, 1, 4, 5, 8 \rangle ,\  \langle 0, 5, 6, 7, 8 \rangle ,\ 
\langle 0, 4, 5, 6, 8\rangle ,  \langle 0, 2, 6, 7, 8 \rangle,  \\
   \langle 0, 3, 5, 7, 8  \rangle ,\  
 \langle 0, 1, 3, 5, 8 \rangle, 
 \langle 0, 2, 3, 7, 8  \rangle ,\  
 \langle 0, 1, 2, 3, 8 \rangle
 \end{align*}  
and minimal  non-faces 
\begin{align*}\{v_2,v_5\},\ \{v_3,v_4\},\ \{v_3,v_6\},\ \{v_1,v_7\},\  
\{v_4,v_7\},\ \{v_1,v_6\},\\ 
\{v_2,v_4\}.\end{align*}  
\end{itemize}
\end{proposition}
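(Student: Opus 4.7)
The plan is to imitate the methodology demonstrated in the proof of Proposition~\ref{prop:triangulation of 4-cells} for the polytope $\nabla_{\mathrm{II.f}}^{(4)}$, now carried out for each of the five polytopes $\nabla_{\mathrm{VI.c}}^{(4)}$, $\nabla_{\mathrm{VII.b}}^{(4)}$, $\nabla_{\mathrm{VIII.a}}^{(4)}$, $\nabla_{\mathrm{IX.a}}^{(4)}$, $\nabla_{\mathrm{X.a}}^{(4)}$ in turn. The starting data are already at hand: for each polytope, the list of lattice points together with the defining linear inequalities $L_a\ge 0$ (one per arrow) are given in Section~\ref{sec:remaining 4-cells}. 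Since each of these pairs $(Q,\theta)$ is tight, these inequalities are precisely the facet-defining inequalities, so by substitution one immediately identifies which of the lattice points $v_0,\dots,v_m$ lie on each facet.

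The computation proceeds recursively along the pulling order specified in the statement. Recall that pulling a polytope $P$ at a vertex $v\in P$ produces the subdivision whose maximal cells are the pyramids with apex $v$ over those facets of $P$ that do not contain $v$; to obtain a triangulation, one iterates by pulling at further vertices inside each subcell that is not yet a simplex. At each stage one uses the elementary fact that the facets of a pyramid with apex $v$ over a base $F$ are $F$ itself together with the convex hulls of $v$ with the facets of $F$. Hence once the facet lists of the starting polytopes are written down, propagating them through successive pulling operations is an entirely mechanical procedure. Since each $\nabla$ in the list is compressed (being a cell of some $(\Gamma^*,\theta)$), Santos' theorem invoked in Lemma~\ref{lemma:haase-paffenholz}(ii) guarantees that the resulting pulling triangulation is automatically unimodular, so no further verification of that point is needed.

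Having obtained the list of maximal simplices claimed in (i)--(v), the minimal non-faces are determined purely combinatorially: a subset $S\subseteq\{v_0,\dots,v_m\}$ fails to be a face precisely when it is contained in no maximal simplex of the triangulation, and it is a minimal non-face when additionally every proper subset lies in some maximal simplex. For each polytope this is a finite check over the listed cells; after ruling out the two-element non-faces that appear, one confirms that no other minimal non-faces arise.

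The conceptual content is modest: no new ideas beyond those already used for $\nabla_{\mathrm{II.f}}^{(4)}$ in Proposition~\ref{prop:triangulation of 4-cells} are required. The only real obstacle is the bookkeeping burden, which is most substantial for $\nabla_{\mathrm{VIII.a}}^{(4)}$ and $\nabla_{\mathrm{IX.a}}^{(4)}$: each has $10$ lattice points and produces $11$ or $12$ maximal simplices after three pulling steps, so one must track the facet incidence through several generations of pyramid subdivisions. To keep the argument readable, I would spell out only the first pulling step in detail for each polytope, indicating which cells are already simplices and which are pyramids whose bases need their facets listed, and leave the subsequent iterations to an inspection of the tables given in the statement. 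The omission of routine details is consistent with the style of Proposition~\ref{prop:triangulation of 4-cells}.
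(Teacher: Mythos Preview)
Your proposal is correct and matches the paper's own treatment: the paper does not give a separate proof of this proposition but simply remarks that ``as in Proposition~\ref{prop:triangulation of 4-cells}, one can easily construct pulling triangulations'' of these polytopes, using the same notation and method. Your description of the recursive pulling procedure, the use of the facet data from the tight presentations, and the combinatorial identification of minimal non-faces is exactly the intended argument.
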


We record the complete list of 
$4$-dimensional compressed prime flow polytopes 
that was obtained in Section~\ref{sec:4-cells} 
and in the present section: 

\begin{proposition}\label{prop:all 4-dim compressed} 
Up to equivalence there are $11$ prime compressed 
$4$-dimensional flow polytopes, namely 
\[\nabla_{\mathrm{II.c}}^{(4)}, 
\nabla_{\mathrm{I.d}}^{(4)},
\nabla_{\mathrm{II.f}}^{(4)},
\nabla_{\mathrm{III.d}}^{(4)},
\nabla_{\mathrm{I.a}}^{(4)},
\nabla_{\mathrm{V.b}}^{(4)},
\nabla_{\mathrm{VI.c}}^{(4)},
\nabla_{\mathrm{VII.b}}^{(4)},
\nabla_{\mathrm{VIII.a}}^{(4)},
\nabla_{\mathrm{IX.a}}^{(4)},
\nabla_{\mathrm{X.a}}^{(4)}.\] 
\end{proposition}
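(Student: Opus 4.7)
The plan is to assemble Proposition~\ref{prop:all 4-dim compressed} as a corollary of the case analyses already carried out in Sections~\ref{sec:4-cells} and \ref{sec:remaining 4-cells}; the remaining work is to argue completeness of the enumeration and pairwise inequivalence of the eleven polytopes.

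First, I would invoke Corollary~\ref{cor:refined all compressed}, which reduces the problem to describing $\mathcal{P}(\Gamma)$ for every $\Gamma\in\mathcal{L}_4$ that carries no double edge joining a valency-$3$ vertex to a valency-$\geq 4$ vertex. From the Hasse diagram of contractions in Figure~\ref{figure:hasse diagram} one then reads off the (finite) list of graphs meeting this restriction, namely $\Gamma_{\mathrm{I}},\ldots,\Gamma_{\mathrm{V}}$ (the five graphs in $\mathcal{L}_4^{\mathrm{3-reg}}$) together with $\Gamma_{\mathrm{VI}},\ldots,\Gamma_{\mathrm{X}}$ introduced in the present section. This step is where Lemma~\ref{lemma:reduction34} earns its keep: without it, one would have to go through all of $\mathcal{L}_4$ rather than just this short list.

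Second, for each such $\Gamma$ I would enumerate $\mathcal{W}(\Gamma)$ (explicit in each subfigure) and process each pair $(\Gamma^*,\theta)$ via the tightening procedure described in the proof of Proposition~\ref{prop:4-cells}: repeatedly apply Lemma~\ref{lemma:reduction} to the valency-$2$ sinks adjacent to sources, pick a spanning tree in the resulting quiver, declare the coordinates on the complementary arrows to be free parameters $x,y,z,w$, express all arrow-flows as degree-one polynomials $L_a(x,y,z,w)$, and realize $\nabla(\Gamma^*,\theta)\subset\mathbb{R}^4$ as $\{L_a\geq 0\mid a\in Q_1\}$. Redundant inequalities are then contracted, and either the polytope drops dimension (and is discarded) or we obtain a tight pair $(Q',\sigma')$ representing a $4$-dimensional polytope. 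By inspection, the union of the lists produced in Proposition~\ref{prop:4-cells} and in Section~\ref{sec:remaining 4-cells} yields exactly the eleven polytopes named in the statement.

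Third, I would verify that the eleven polytopes are pairwise inequivalent. As observed in the proof of Proposition~\ref{prop:4-cells}, the pair (number of facets, number of lattice points) is an equivalence invariant of a tight flow polytope (since tightness forces the number of facets to equal $|Q'_1|$), and these invariants can be read directly from the tight presentations and lattice-point tables given just before Proposition~\ref{prop:4-cells} and in Section~\ref{sec:remaining 4-cells}; a quick check separates all eleven candidates. The main obstacle is not conceptual but organizational: one must process a genuinely long list of $(\Gamma,\theta)$ pairs and avoid missing coincidences between polytopes produced by unrelated pairs. Lemma~\ref{lemma:reduction34} collapses many redundancies in advance, and the facet/lattice-point invariant finishes off what remains.
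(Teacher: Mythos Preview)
Your proposal is correct and matches the paper's approach: Proposition~\ref{prop:all 4-dim compressed} is stated in the paper as a summary of the case analyses in Sections~\ref{sec:4-cells} and~\ref{sec:remaining 4-cells}, and your outline (Corollary~\ref{cor:refined all compressed} to cut down the list of graphs, then tightening each $(\Gamma^*,\theta)$ with $\theta\in\mathcal{W}(\Gamma)$, then separating the survivors by the invariant (facets, lattice points)) is exactly how those sections proceed. Your explicit plan to check that the (facets, lattice points) pair distinguishes all eleven polytopes is a useful addition, since the paper only verifies this explicitly for the first six in the proof of Proposition~\ref{prop:4-cells}.
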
 

\begin{example} \label{example:4 dim compressed non-flow} 
We give an example of a compressed $0-1$ polytope in $\mathbb{R}^4$ that is not a flow polytope (compare with 
Remark~\ref{remark:all compressed are flow}).  
Consider the following seven $0-1$ vectors in $\mathbb{R}^4$: 
\[\begin{array}{c|c|c|c|c|c|c}
v_0 & v_1 & v_2 & v_3 & v_4 & v_5 & v_6\\
\hline 
0&1&1&0&1&0&0
\\ 0&1&0&1&1&1&0
\\ 0&0&1&1&1&0&1 
\\ 0&0&0&1&0&0&0
\end{array}\]
Their convex hull $\nabla$ is a cone with apex $v_3$ over the facet 
$\mathrm{Conv}(v_0,v_1,v_2,v_4,v_5,v_6)$. 
It has $9$  facets, in the notation of Proposition~\ref{prop:triangulation of 4-cells} they are the following: 
\begin{align*}\langle 0,1,2,4,5,6\rangle,\ \langle 0,3,5,6 \rangle, \ \langle 1,3,4,5\rangle, \ 
\langle 2,3,4,6\rangle, \ \langle 0,1,2,3\rangle,  \\ \ \langle 3,4,5,6\rangle, 
\ \langle 1,2,3,4\rangle, \ \langle 0,2,3,6\rangle, \ \langle 0,1,3,5 \rangle 
\end{align*} 
The corresponding facet-inequalities 
defining $\nabla$ are 
\begin{align*} 
w\ge0, \quad x\ge 0, \quad y\le 1, \quad 
z\le 1, \quad x-y-z+2w\le 0, \quad \\ x-y-z+w\ge -1, \quad 
x+w\le 1, \quad w\le y,\quad w\le z 
\end{align*} 
The polytope $\nabla$ is compressed, but is not equivalent to a flow polytope. Indeed, it is $4$-dimensional and  prime  (since the number of its vertices is prime), and among the $11$ compressed prime $4$-dimensional flow polytopes there is none having $7$ vertices and $9$ facets. 
\end{example} 

%%%%%%%%%%%%%%%%%

\section{Ehrhart polynomials}\label{sec:ehrhart}  

Recall that the \emph{Ehrhart polynomial} of a lattice polytope $\nabla$ is the polynomial  $L_{\nabla}$ with rational 
coefficients such that for any positive integer $n$, the value $L_{\nabla}(n)$ is the number of lattice points in the polytope 
$n\nabla$. 
It is known that the Ehrhart polynomial of a lattice polytope is determined by the $f$-vector of an unimodular triangulation and vice versa (see \cite[Theorem 9.3.25]{lorea-rambau-santos}). Using a somewhat different logic we compute the Ehrhart polynomials   of the $3$ and $4$-dimensional prime  compressed flow polytopes from  the triangulations constructed in the previous sections. 

\begin{proposition}\label{prop:ehrhart} 
The Ehrhart polynomials of the $3$ and $4$-dimensional prime compressed flow polytopes are given 
in the table below: 
\[
\begin{array}{c|c}
 \nabla & L_{\nabla}(n) \\ \hline 
\nabla_{\mathrm{I.a}}^{(3)} & \binom{n+3}{3}=\frac{1}{6}(n^3+6n^2+11n+6) \\
\nabla_{\mathrm{II.c}}^{(3)} & \binom{n+3}{3}+\binom{n+2}{3}=\frac{1}{6}(2n^3+9n^2+13n+6) \\
\nabla_{\mathrm{I.b}}^{(3)} & \binom{n+3}{3}+2\binom{n+2}{3}+\binom{n+1}{3}=
\frac{1}{6}(4n^3+12n^2+14n+6) \\
\nabla_{\mathrm{II.c}}^{(4)} & \binom{n+4}{4}=\frac{1}{24}(n^4 +10n^3 +35n^2 +50n +24 \\
\nabla_{\mathrm{I.d}}^{(4)} & \binom{n+4}{4}+\binom{n+3}{4}=\frac{1}{24}(2n^4+16n^3+46n^2+56n+24) \\
\nabla_{\mathrm{II.f}}^{(4)} & \binom{n+4}{4}+2\binom{n+3}{4}=\frac{1}{24}(3n^4+22n^3+57n^2+62n+24)\\
\nabla_{\mathrm{III.d}}^{(4)} & \binom{n+4}{4}+3\binom{n+3}{4}+\binom{n+2}{4}=
\frac{1}{24}(5n^4+30n^3+67n^2+66n+24) \\
\nabla_{\mathrm{I.a}}^{(4)} & \binom{n+4}{4}+4\binom{n+3}{4}+\binom{n+2}{4}
=\frac{1}{24}(6n^4+36n^3+78n^2+72n+24)\\
\nabla_{\mathrm{V.b}}^{(4)} & \binom{n+4}{4}+\binom{n+3}{4}+\binom{n+2}{4}=
\frac{1}{24}(3n^4+18n^3+45n^2+54n+24) \\
\nabla_{\mathrm{VI.c}}^{(4)} & \binom{n+4}{4}+2\binom{n+3}{4}+\binom{n+2}{4}
=\frac{1}{24}(4n^4 +24n^3 +56n^2 +60n +24) \\
\nabla_{\mathrm{VII.b}}^{(4)} & \binom{n+4}{4}+3\binom{n+3}{4}+2\binom{n+2}{4}=
\frac{1}{24}(6n^4 +32n^3 +66n^2 +64n +24) \\
\nabla_{\mathrm{VIII.a}}^{(4)} & \binom{n+4}{4}+5\binom{n+3}{4}+5\binom{n+2}{4}+\binom{n+1}{4}
=\frac{1}{24}(12n^4 +48n^3 +84n^2 +72n +24)\\
\nabla_{\mathrm{IX.a}}^{(4)} & \binom{n+4}{4}+5\binom{n+3}{4}+5\binom{n+2}{4}
=\frac{1}{24}(11n^4 +50n^3 +85n^2 +70n +24)
\\
\nabla_{\mathrm{X.a}}^{(4)} & \binom{n+4}{4}+4\binom{n+3}{4}+
3\binom{n+2}{4}
=\frac{1}{24}(8n^4+40n^3+76n^2+68n+24)
\end{array} \]
\end{proposition}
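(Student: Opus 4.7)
My approach will exploit the explicit pulling triangulations furnished by Propositions~\ref{prop:triangulation of 4-cells} and~\ref{prop:triangulation of new 4-cells}. Since each of these triangulations is unimodular, I will compute the Ehrhart polynomial of each polytope through a shelling of its top-dimensional cells. Recall that if $\sigma_1,\dots,\sigma_m$ is a shelling order of the maximal simplices of a unimodular triangulation of a $d$-polytope $\nabla$, and if we set
\[r_i := \#\{F \text{ facet of } \sigma_i \mid F\subseteq \sigma_1\cup\cdots\cup\sigma_{i-1}\},\]
then $\nabla$ decomposes as a disjoint union of half-open unimodular simplices whose $n$-th dilates contain $\binom{n+d-r_i}{d}$ lattice points respectively, yielding
\[L_\nabla(n) = \sum_{i=1}^m \binom{n+d-r_i}{d}.\]

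For the two cases $\nabla_{\mathrm{I.a}}^{(3)}$ and $\nabla_{\mathrm{II.c}}^{(4)}$ the polytope is itself a unimodular simplex, so no triangulation is needed. For each of the remaining $12$ polytopes, I will fix an order of the listed top-dimensional cells that one verifies is a shelling (for pulling triangulations a natural choice is obtained by grouping first the cells containing the most recently pulled vertex and iterating), and then tabulate the $r_i$'s by inspecting which facets of $\sigma_i$ are shared with preceding cells. For instance, in $\nabla_{\mathrm{II.c}}^{(3)}$ the two cells $\langle 0,2,3,4\rangle$, $\langle 0,1,3,4\rangle$ meet in the facet $\langle 0,3,4\rangle$, giving $(r_1,r_2)=(0,1)$ and hence $L(n)=\binom{n+3}{3}+\binom{n+2}{3}$; for the Birkhoff polytope $\nabla_{\mathrm{V.b}}^{(4)}$ the three cells of the triangulation of Proposition~\ref{prop:triangulation of 4-cells}(vii) admit the shelling with $(r_1,r_2,r_3)=(0,1,2)$, giving $\binom{n+4}{4}+\binom{n+3}{4}+\binom{n+2}{4}$.

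The main obstacle is purely combinatorial bookkeeping: for each polytope one must verify that the listed maximal cells can indeed be shelled and compute the multiplicities. Since the largest triangulation contains only $12$ cells (in $\nabla_{\mathrm{VIII.a}}^{(4)}$), this is entirely mechanical. The final outcome is best presented as a table recording, for each polytope, the multiplicities $(\#\{i:r_i=k\})_{k=0}^{d}$; these numbers are precisely the $h^*$-vector coefficients that appear as the binomial coefficients in each row of the claimed table, and a consistency check on each line is the total $\sum_k \#\{i:r_i=k\} = m$, matching the number of maximal cells of the chosen triangulation.
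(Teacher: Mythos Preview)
Your proposal is correct and reaches the same conclusion as the paper, but via a different (though closely related) combinatorial mechanism. Both approaches start from the same unimodular pulling triangulations of Propositions~\ref{prop:triangulation of 4-cells} and~\ref{prop:triangulation of new 4-cells}, and both produce the $h^*$-vector of each polytope in the form $\sum_k h^*_k\binom{n+d-k}{d}$. The difference is in how the $h^*_k$ are extracted. You use a shelling: order the maximal cells, record for each cell the number $r_i$ of its facets already seen, and read off $h^*_k=\#\{i:r_i=k\}$. The paper instead works on the algebraic side: the initial ideal of $\mathcal{I}_\nabla$ corresponding to the triangulation is square-free, so the standard monomials are exactly those supported on some maximal cell $C_i$; the authors then exhibit for each polytope explicit monomials $u_i$ (one per cell, with $\deg u_i$ playing the role of your $r_i$) such that the set of standard monomials decomposes as the disjoint union $\bigsqcup_i u_i\langle t_z:z\in C_i\rangle$, and count by degree. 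Your shelling argument is the more elementary and self-contained route; the paper's standard-monomial decomposition has the advantage of tying the computation directly to the Gr\"obner basis theme running through the article, and sidesteps the need to exhibit and verify a shelling order. Either way the bookkeeping is of the same size and the resulting binomial expressions are identical.
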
 

\begin{proof}
We use the notation of Section~\ref{sec:toric ideals}. The value $L_{\nabla}(n)$ equals the dimension over $K$ of 
the degree $n$ homogeneous component of the factor algebra $K[t_z\mid z\in \nabla\cap \mathbb{Z}^d]/\mathcal{I}_{\nabla}$ 
(here the polynomial algebra is endowed with the standard grading; that is, the degree $1$ homogeneous component 
is spanned by the variables $t_z$). This dimension equals the number of degree $n$ standard monomials in 
$K[t_z\mid z\in \nabla\cap \mathbb{Z}^d]$ with respect to a Gr\"obner basis of $\mathcal{I}_{\nabla}$. 
Now for a $3$ or $4$-dimensional prime compressed flow polytope $\nabla$ consider the Gr\"obner basis whose initial monomials 
correspond to the minimal non-faces for the triangulation of $\nabla$ given in Proposition~\ref{prop:triangulation of 4-cells} or Proposition~\ref{prop:triangulation of new 4-cells}. Since this triangulation is unimodular, our initial ideal is 
generated by square-free monomials (see \cite[Corollary 8.9]{sturmfels}), and the corresponding initial complex 
is the triangulation of $\nabla$ we started with. It follows that the standard monomials are exactly the monomials 
for which there exists a maximal cell $C$ in the triangulation such that the support of the monomial belongs to 
$\langle t_z\mid z\in C\rangle$ (the subsemigroup of $K[t_z\mid z\in \nabla\cap\mathbb{Z}^d]$ generated by 
$\{t_z\mid z\in C\}$). Suppose that $\nabla=C_1\cup\dots\cup C_m$ is the given triangulation.  
Below we present for $\nabla$ monomials $u_1,\dots,u_m$ such that the set of standard monomials 
for the chosen Gr\"obner basis of $\mathcal{I}_{\nabla}$ is $\bigsqcup_{i=1}^mw_i\langle t_z\mid z\in C_i\rangle$ 
(disjoint union). 
From this presentation of the set of standard monomials one can easily read off the number of standard 
monomials in each degree, thereby verifying the table in the statement. 
\begin{align*} \nabla_{\mathrm{I.a}}^{(3)}: & \quad \langle t_0,t_1,t_2,t_3\rangle \\
\nabla_{\mathrm{II.c}}^{(3)}: & \quad \langle t_0,t_2,t_3,t_4\rangle \sqcup t_1\langle t_0,t_1,t_3,t_4\rangle \\
\nabla_{\mathrm{I.b}}^{(3)}: & \quad \langle t_0,t_1,t_2,t_5\rangle \sqcup t_3\langle t_0,t_1,t_3,t_5\rangle 
\sqcup t_4\langle t_0,t_2,t_4,t_5\rangle\sqcup t_3t_4\langle t_0,t_3,t_4,t_5\rangle \\ 
\nabla_{\mathrm{II.c}}^{(4)}: & \quad \langle t_0,t_1,t_2,t_3,t_4 \rangle  \\
\nabla_{\mathrm{I.d}}^{(4)}: & \quad \langle t_0,t_1,t_2,t_3,t_5\rangle \sqcup t_4\langle t_0,t_1,t_2,t_4,t_5 \rangle \\
\nabla_{\mathrm{II.c}}^{(4)}: & \quad \langle t_0,t_1,t_2,t_3,t_5\rangle \sqcup t_6\langle t_0,t_2,t_3,t_5,t_6\rangle \sqcup
t_4\langle t_0,t_2,t_3,t_4,t_6\rangle \\
\nabla_{\mathrm{III.d}}^{(4)}: & \quad \langle t_0,t_1,t_2,t_3,t_5\rangle \sqcup t_7\langle t_0,t_1,t_3,t_5,t_7\rangle \sqcup
t_4\langle t_1,t_2,t_3,t_4,t_5\rangle \\ &\quad   \sqcup  t_6\langle t_0,t_3,t_5,t_6,t_7\rangle \sqcup 
t_4t_7\langle t_1,t_3,t_4,t_5,t_7\rangle \\
\nabla_{\mathrm{I.a}}^{(4)}: & \quad \langle t_0,t_4,t_5,t_6,t_7\rangle \sqcup t_8\langle t_0,t_4,t_6,t_7,t_8\rangle \sqcup
t_1\langle t_0,t_1,t_4,t_5,t_6\rangle \\ & \quad  \sqcup t_3\langle t_0,t_3,t_4,t_5,t_7\rangle\sqcup 
t_2\langle t_0,t_1,t_2,t_4,t_6\rangle\sqcup t_1t_3\langle t_0,t_1,t_3,t_4,t_5\rangle \\ 
\nabla_{\mathrm{V.b}}^{(4)}: & \quad \langle t_0,t_2,t_3,t_4,t_5\rangle \sqcup t_1\langle t_0,t_1,t_2,t_3,t_5\rangle \sqcup
t_1t_4\langle t_0,t_1,t_2,t_4,t_5\rangle  \\
\nabla_{\mathrm{VI.c}}^{(4)}: & \quad \langle t_0,t_1,t_2,t_3,t_5\rangle \sqcup t_6\langle t_0,t_1,t_3,t_5,t_6\rangle \sqcup
t_4\langle t_0,t_3,t_4,t_5,t_6\rangle \sqcup  t_2t_4\langle t_0,t_2,t_3,t_4,t_5\rangle\\
\nabla_{\mathrm{VII.b}}^{(4)}: & \quad \langle t_0,t_1,t_3,t_5,t_6\rangle \sqcup t_4\langle t_0,t_3,t_4,t_5,t_6\rangle \sqcup
t_7\langle t_0,t_4,t_5,t_6,t_7\rangle \\ & \quad  \sqcup  t_1t_7\langle t_0,t_1,t_5,t_6,t_7\rangle 
\sqcup t_2\langle t_0,t_1,t_2,t_5,t_7\rangle \sqcup t_2t_4\langle t_0,t_2,t_4,t_5,t_7\rangle\\
\nabla_{\mathrm{VIII.a}}^{(4)}: & \quad \langle t_0,t_2,t_7,t_8,t_9\rangle \sqcup t_5\langle t_0,t_2,t_5,t_7,t_9\rangle \sqcup
t_3\langle t_0,t_2,t_3,t_7,t_8\rangle \\ & \quad  \sqcup  t_6\langle t_0,t_6,t_7,t_8,t_9\rangle 
\sqcup t_1\langle t_0,t_1,t_2,t_3,t_7\rangle \sqcup t_4\langle t_0,t_1,t_2,t_4,t_7\rangle \\ &\quad 
\sqcup 
t_5t_6\langle t_0,t_5,t_6,t_7,t_9\rangle \sqcup t_3t_6\langle t_0,t_3,t_6,t_7,t_8\rangle \sqcup
t_4t_5\langle t_0,t_2,t_4,t_5,t_7\rangle \\ & \quad \sqcup  t_1t_6\langle t_0,t_1,t_3,t_6,t_7\rangle 
\sqcup t_4t_6\langle t_0,t_1,t_4,t_6,t_7\rangle \sqcup t_4t_5t_6\langle t_0,t_4,t_5,t_6,t_7\rangle \\
\nabla_{\mathrm{IX.a}}^{(4)}: & \quad \langle t_0,t_2,t_5,t_7,t_9\rangle \sqcup t_1\langle t_0,t_1,t_2,t_7,t_9\rangle \sqcup
t_8\langle t_0,t_1,t_7,t_8,t_9\rangle \\ &\quad  \sqcup  t_3\langle t_0,t_1,t_2,t_3,t_9\rangle 
\sqcup t_6\langle t_0,t_6,t_7,t_8,t_9\rangle \sqcup t_4\langle t_0,t_1,t_4,t_7,t_8\rangle \\ &\quad 
\sqcup 
t_5t_6\langle t_0,t_5,t_6,t_7,t_9\rangle \sqcup t_4t_6\langle t_0,t_4,t_6,t_7,t_8\rangle 
\sqcup t_4t_5\langle t_0,t_4,t_5,t_6,t_7\rangle \\ &\quad
\sqcup t_3t_8\langle t_0,t_1,t_3,t_8,t_9\rangle
\sqcup t_3t_6\langle t_0,t_3,t_6,t_8,t_9\rangle
\\
\nabla_{\mathrm{X.a}}^{(4)}: & \quad \langle t_0,t_1,t_4,t_5,t_8\rangle \sqcup t_6\langle t_0,t_4,t_5,t_6,t_8\rangle \sqcup
t_3\langle t_0,t_1,t_3,t_5,t_8\rangle \\ &\quad  \sqcup  t_2\langle t_0,t_1,t_2,t_3,t_8\rangle 
\sqcup t_7\langle t_0,t_5,t_6,t_7,t_8\rangle \sqcup t_3t_7\langle t_0,t_3,t_5,t_7,t_8\rangle \\ &\quad 
\sqcup 
t_2t_7\langle t_0,t_2,t_3,t_7,t_8\rangle \sqcup t_2t_6\langle t_0,t_2,t_6,t_7,t_8\rangle 
\end{align*} 
\end{proof}

%%%%%%%%%%%%%%%%%%%%%%%%%%%%%%%%%%%%%%

\end{document}